\tikzset{%
 unshaded/.style={draw, shape=circle, fill=white, inner sep=1.5pt},
 shaded/.style={draw, shape=circle, fill=black, inner sep=1.5pt},
 invisible/.style={shape=circle, inner sep=1.5pt},
 label/.style={shape=rectangle, inner xsep=5pt, inner ysep=7pt},
 auto,
 curvy/.style={->, shorten >=3pt, shorten <=3pt, >=latex, looseness=1, bend angle=30},
 straight/.style={->, shorten >=3pt,shorten <=3pt, >=latex},
 loopy/.style={->, shorten >=3pt, shorten <=3pt, >=latex, min distance=15pt},
 order/.style={thin},
 bigloop/.style={-, min distance=65pt}}
\newtheorem{theorem}{Theorem}[section]
\newtheorem{lemma}[theorem]{Lemma}
\newtheorem{proposition}[theorem]{Proposition}
\newtheorem{corollary}[theorem]{Corollary}
\newtheorem{TopSwap}[theorem]{TopSwap Theorem for Structures}
 \theoremstyle{definition}
\newtheorem{definition}[theorem]{Definition}
\newtheorem{remark}[theorem]{Remark}
\newtheorem{example}[theorem]{Example}
\declaretheoremstyle[
spaceabove=1ex, spacebelow=1ex,
headfont=\normalfont\bfseries,
notefont=\bfseries, notebraces={}{},
bodyfont=\normalfont,
postheadspace=0.5em,
name={\ignorespaces},
numbered=no,
headpunct=.]
{mystyle}
\declaretheorem[style=mystyle]{named}
\newenvironment{newlist}
 {\begin{list}{}{\setlength{\labelsep}{0.25cm}
 \setlength{\labelwidth}{0.65cm}
 \setlength{\leftmargin}{0.9cm}}}  
 {\end{list}}
\newenvironment{newitemize}
 {\begin{list}{$\bullet$}{\setlength{\labelsep}{0.25cm}
 \setlength{\labelwidth}{0.25cm}
 \setlength{\leftmargin}{0.45cm}}}
 {\end{list}}
 \newcommand{\defn}[1]{{\emph{#1}}}
\newcommand{\cat}[1]{\boldsymbol{\mathscr{#1}}}
\newcommand{\CA}{{\cat A}}
\newcommand{\CB}{{\cat B}}
\newcommand{\CC}{\cat C}
\newcommand{\CCD}{\cat D}
\newcommand{\CK}{\cat K}
\newcommand{\CS}{\cat S}
\newcommand{\CP}{\cat P}
\newcommand{\CM}{{\cat M}}
\newcommand{\CO}{\cat O}
\newcommand{\D}[1]{\mathrm D(#1)}
\newcommand{\E}[1]{\mathrm E(#1)}
\newcommand{\F}[1]{\mathrm F(#1)}
\newcommand{\G}[1]{\mathrm G(#1)}
\newcommand{\ED}[1]{\mathrm {ED}(#1)}
\newcommand{\DE}[1]{\mathrm {DE}(#1)}
\newcommand{\GF}[1]{\mathrm {GF}(#1)}
\newcommand{\FG}[1]{\mathrm {FG}(#1)}
\newcommand{\CX}{\cat X}
\newcommand{\CZ}{\cat Z}
\DeclareMathOperator{\Filt}{Filt}
\newcommand{\CPb}{\cat P_{\kern -2pt\scriptscriptstyle 01}}
\newcommand{\sub}[1]{_{_{\kern-.9pt{\scriptstyle #1}}}}
\newcommand{\medsub}[2]{#1\lower0.6ex\hbox{$\scriptstyle{#2}$}}
\newcommand{\eA}[1]{\medsub e {\kern-0.75pt\A\kern-0.75pt}(#1)}
\newcommand{\esub}[1]{\medsub e {\kern-0.75pt #1 \kern-0.75pt}}
\newcommand{\epsub}[1]{\medsub \varepsilon {\kern-1.25pt #1}}
\newcommand{\esubA}{\medsub e {\kern-0.75pt\A\kern-0.75pt}}
\newcommand{\twiddle}[1]{{\smash{\underset{\raise.4ex\hbox{$\smash\sim$}}
                       {\mathbf{#1}}}\vphantom{\underline{\mathbf{#1}}}}}
\newcommand{\stwiddle}[1]{{\smash{\underset{\raise.3ex\hbox{$\scriptstyle\smash\sim$}}{\mathbf{#1}}}\vphantom{\underline{\underline{\mathbf{#1}}}}}}
\newcommand{\MT}{\mathbf M^\Tp}
\newcommand{\twoBf}{\mathbf 2}
\newcommand{\twoT}{\mathbbm 2}
\newcommand{\PP}{\mathbf Y}
\newcommand{\T}{\mathscr{T}}
\newcommand{\A}{\mathbf A}
\newcommand{\B}{\mathbf B}
\newcommand{\Lalg}{\mathbf L}
\newcommand{\M}{\mathbf M}
\newcommand{\X}{\mathbf X}
\newcommand{\Y}{\mathbf Y}
\newcommand{\Z}{\mathbf Z}
\newcommand{\Ys}{\Y\kern -2pt _s}
\newcommand{\Tp}{\mathscr{T}}
\renewcommand{\leq}{\leqslant}
\newcommand{\dotbigcupdisp}{{\Large\textstyle\overset{\raise.65ex\hbox{$\smash\cdot$}}{\smash\bigcup}}}
\newcommand{\dotbigcup}{\overset{\raise.6ex\hbox{$\smash\cdot$}}{\smash\bigcup}}
\newcommand{\n}{n}
 \DeclareMathOperator{\Endo}{End}
 \DeclareMathOperator{\eq}{eq}
 \DeclareMathOperator{\id}{id}
 \DeclareMathOperator{\pro}{pro\!}
 \newcommand{\ISP}{\mathsf{ISP}}
 \newcommand{\ISOP}{\mathsf{IS}^0\mathsf{P}}
 \newcommand{\ISOPp}{\mathsf{IS}^0\mathsf{P}^+}
 \newcommand{\HSP}{\mathsf{HSP}}
 \newcommand{\IScP}{{\mathsf{IS} _{\mathrm{c}}
 \mathsf{P}^+}}
 \newcommand{\IScPnp}{{\mathsf{IS} _{\mathrm{c}}
 \mathsf{P}}}
 \newcommand{\ISOcP}{{\mathsf{IS}^0 _{\mathrm{c}}
 \mathsf{P}^+}}
\newcommand{\ISOcPnp}{{\mathsf{IS}^0 _{\mathrm{c}}
 \mathsf{P}}}
\newcommand{\rest}[1]{{\upharpoonright}_{#1}}
\newcommand{\bigand}{\mathop{\bigwedge\kern -8.5truept \bigwedge}}
\newcommand{\Bigand}{\mathop{\bigwedge\kern -10truept \bigwedge}}
\newcommand{\littleand}{\mathbin{\wedge\kern -8truept \wedge}}
\newcommand{\bigor}{\mathop{\bigvee\kern -8.5truept \bigvee}}
\newcommand{\Bigor}{\mathop{\bigvee\kern -10truept \bigvee}}
\newcommand{\littleor}{\mathbin{\vee\kern -8truept \vee}}
\newcommand{\lsem}{[\kern-1.75pt[}
\newcommand{\rsem}{]\kern-1.75pt]}
\newcommand{\Nach}[1]{\beta_\leq(#1)} 
\newcommand{\upsh}[1]{{\upshape#1}}
\newcommand{\powerset}[1]{{\raise.5ex\hbox{\Large$\wp$}(#1)}}
\begin{document}

\title[Compactifications of structures]
{Bohr compactifications of\\ algebras and structures}

\author{B.\,A. Davey} 
\address{Department of Mathematics and Statistics, La Trobe University, Victoria 3086,
           Australia}
\email{B.Davey@latrobe.edu.au}
\author{M. Haviar}
\address{Faculty of Natural Sciences, Matej Bel University, Tajovsk\'eho 40,
           974 01 Bansk\'{a} Bystrica, Slovak Republic}
\email{miroslav.haviar@umb.sk}

\author{H.\,A. Priestley}
\address{Mathematical Institute, University of Oxford, Radcliffe Observatory Quarter,
           Oxford OX2 6GG, UK}   
\email{hap@maths.ox.ac.uk}
\thanks{The first author wishes to thank the Research Institute of M.~Bel University in Bansk\'{a} Bystrica for its hospitality while working on this paper. The second author acknowledges support from Slovak grants APVV-0223-10, Mobility-ITMS 26110230082 and VEGA 1/0212/13.}

\begin{abstract}
This paper provides a unifying framework for a range of categorical constructions characterised by universal mapping properties, within the realm of compactifications of discrete structures. Some classic examples fit within  this broad picture: the Bohr compactification of an abelian group via Pontryagin duality, the zero-dimensional Bohr compactification of a semilattice, and the Nachbin order-compactification of an ordered set.

The notion of a natural extension functor is extended to suitable categories of structures  and such a functor is shown to  yield a reflection into an associated category of topological structures.  Our principal results address reconciliation of the natural extension with the Bohr compactification  or its zero-dimensional variant. In certain cases  the natural extension functor and a Bohr compactification functor are the same;  in others the functors have different codomains but may agree on all objects. Coincidence in the stronger sense occurs in the zero-dimensional setting precisely when the domain is a category of structures whose associated topological prevariety is standard. 
It occurs, in the weaker sense only, for the class  of ordered sets and,  as we show, also for infinitely many classes of ordered structures. 

Coincidence results aid understanding of Bohr-type compactifications, which are defined abstractly. Ideas from natural duality theory lead to an explicit description  of the natural extension which is particularly amenable for any   prevariety of algebras with a finite, dualisable, generator. Examples of such classes---often varieties---are plentiful and varied, and in many cases the associated topological prevariety is standard.  
\end{abstract}

\keywords{Bohr compactification, natural extension, natural duality, Stone--\v Cech compactification, Nachbin order-compactification, standard topological prevariety}
  \subjclass[2010]{
Primary: 18A40;  
Secondary:
08C20, 
22A30,  
54D35,  
54F05,  
54H10} 

\maketitle

\section{Introduction}  \label{sec:intro}

Our purpose
is to bring within a common framework
a range  of apparently rather disparate universal constructions.
In all cases the objects constructed are topological structures and the construction
is performed by applying the left adjoint to a functor which forgets the topology.
Constructions  of this type arise
widely in algebra and in topology, under  various guises. Specific examples include
\begin{newitemize}  
\item  
the Bohr compactification of an abelian group \cite{Ho64};

\item
the Bohr compactification of a unital meet semilattice \cite{HMS74};

\item
the Stone--\v {C}ech compactification of a set;

\item
 the Nachbin order-compactification of an ordered set \cite{N76,BeMo}.
\end{newitemize}
Here the categories on which the left adjoint functors act have as objects
a suitable class  either of algebras or of  relational structures.

The Bohr compactification is best known, and first  received  attention,
in the context of topological abelian groups.
Somewhat later, the ideas were extended to  semigroups, semilattices and rings
by Holm \cite{Ho64}, suggesting
that a theory of Bohr compactifications could be developed
for  algebraic structures more widely.
This was taken forward by Hart and Kunen \cite{HK99}.
However
they work with an algebraic first-order language, so that the discrete structures
of their title are less general than those we shall consider. 
 (It is immaterial whether
one chooses overtly  to include the discrete topology or, as we shall do, 
suppress it.)
The Bohr construction  comes in two distinct flavours, depending on whether
 one seeks a reflection into
 a category of topological structures  which
has objects which
carry a  compact Hausdorff topology
or one in which the objects are compact and zero-dimensional.
These functors are customarily denoted, respectively,
 by~$b$ and~$b_0$.

Bohr compactifications may be considered alongside other
generic  constructions one may perform
on suitable classes:
\begin{newitemize}
\item  
Bohr compactifications  and zero-dimensional Bohr compactifications,  of  algebraic structures,
as studied in \cite{HK99};
\item

 the natural extension of an algebra in any internally residually finite prevariety, 
abbreviated IRF-prevariety,
that is, a class of the form $\ISP(\CM)$, where $\CM$ is a set of finite algebras of common
signature \cite{nisp};
\item
  the profinite completion of an algebra in a residually finite
variety or more generally an IRF-prevariety, in general and in particular cases
(see
\cite{Num57, 
BGMM,
DHP07,DP12} and references therein);

\item

 the canonical extension of an algebra in a finitely generated variety of lattice-based algebras (see \cite{DP12} and references therein).
\end{newitemize}
In each of these cases we start from a category~$\CA$ of algebras
and consider a category~$\CB$ of topological structures in which each object
has a topology-free reduct in~$\CA$  and we have a functor $\mathrm F \colon \CA \to \CB$ which is left adjoint to the functor from $\CB$ to $\CA$ which
 forgets the topology.
The constructions differ in their scope
(that is, in the conditions on the  domain category~$\CA$ on which they operate) 
and  in the manner in which they are
customarily formulated.  Where the same category~$\CA$ supports
more than one of the constructions, the codomain category~$\CB$  may vary.
  In some instances existence is initially   established abstractly;
in others, and for the natural extension in particular,  a concrete description is presented at the outset, or can be derived.
A recurrent theme however is that the constructions can be characterised by  an appropriate universal mapping property.

Our presentation of an overarching framework for compactifications of Bohr type relies on widening the scope of the natural extension construction.
We consider prevarieties  of the form $\CA= \ISP(\CM)$, where $\CM$ is a set of structures
and each $\M\in \CM$ has an associated
compact Hausdorff topology $\Tp$ that is compatible with the structure;
we call such a class a
\emph{compactly-topologisable prevariety}, or CT-prevariety for short.
We then define the associated topological prevariety $\CA_\Tp \coloneqq \IScPnp(\CM_\Tp)$, where $\CM_\Tp =\{\,\M_\Tp\mid \M\in \CM\,\}$ and $\M_\Tp$ denotes $\M$ endowed with the topology~$\Tp$. 
We make both $\CA$ and $\CA_\Tp$ 
into 
categories in the obvious way  (see Sections~\ref{natBohr}
and ~\ref{natext} for more details).
Frequently  we shall present theoretical results only for the case $|\CM| = 1$;
this simplifies the presentation and covers the specific classes we target
in this paper.

A natural
extension functor $n_{\CA}$ exists on any CT-prevariety $\CA$ and so is available in particular
on all four of the categories in our initial list:  abelian groups, unital 
meet semilattices, sets, and ordered sets.
We demonstrate in Section~\ref{natext} that many of the good features of the natural  extension revealed in \cite{nisp} extend to the wider setting:  most notably we have a well-defined functor which is  a reflection and so acts as the left adjoint to the forgetful functor 
from $\CA_\Tp$ into $\CA$.
(We note, by contrast,
that the profinite completion construction
cannot be expected to extend beyond the setting of IRF-prevarieties of algebras.)
The natural extension construction has an important virtue.
The formalism of traditional
natural duality theory, as presented in the text of Clark and Davey \cite{NDftWA},
 enables an explicit description to be given in general of
$n_{\CA}(\A)$, for $\A$ in an IRF-prevariety  $\CA$ of algebras, and
 in a more refined and amenable form, for classes which
admit a natural duality~\cite{nisp}.
Drawing similarly on duality  theory ideas
that
extend to IRF-prevarieties of  structures  \cite{D06} and to certain
CT-prevarieties of structures
\cite{pig},  we are able explicitly to describe natural extensions in the
wider setting, though there are impediments:
structures must not contain
partial operations and,  for  prevarieties with infinite generators,
the results we obtain are less complete  than those for IRF-prevarieties.

The Bohr compactification functor $b$ on a prevariety $\CA$ of structures 
(as presented in Section~\ref{natBohr})
maps $\CA$ into the category $\CA^{\mathrm{ct}}$ of compact topological structures with non-topological reduct in~$\CA$ and is left adjoint to the natural forgetful functor. The zero-dimensional Bohr compactification functor $b_0$ is defined similarly, with $\CA^{\mathrm{ct}}$ replaced by the category $\CA^{\mathrm{Bt}}$ of Boolean topological structures with reduct in~$\CA$.
Thus a Bohr compactification  has an abstract characterisation,  and  so is
 hard to describe explicitly.
  It is therefore advantageous to know when it coincides with the more readily accessible natural extension.  In Section~\ref{sec:BviaN}
we  elucidate
the relationship between the natural extension functor $n_{\CA}$
on a class~$\CA$ of structures and the functor $b$ and,
 when the objects of $\CA_\Tp$ are zero-dimensional, 
the functor~$b_0$; 
see Proposition~\ref{prop:coincide}. The situation
is illustrated in Figure~\ref{fig:new}; each of the functors is left adjoint to the corresponding forgetful functor.

\begin{figure}[ht]
\begin{tikzpicture}
 [node distance=1.5cm,
 auto,
 text depth=0.25ex,
 move down/.style= {transform canvas={yshift=-1pt}}]
\node (A) {$\CA$};
\node (ABt) [above=of A] {$\CA^{\mathrm{Bt}}$};
\node (AT) [left=of ABt] {$\CA_\Tp$};
\node (Act) [right=of ABt] {$\CA^{\mathrm{ct}}$};
\draw[-latex] (A) to node {$n_{\CA}$} (AT);
\draw[-latex] (A) to node [swap] {$b_0$} (ABt);
\draw[-latex] (A) to node [swap] {$b$} (Act);
\draw[right hook-latex,move down] (AT) to node {} (ABt);
\draw[right hook-latex,move down] (ABt) to node {} (Act);
\end{tikzpicture}
\caption{The functors $n_{\CA}$, $b_0$ and $b$ 
(in the case that $\CA_\Tp \subseteq \CA^{\text{Bt}}$)
\label{fig:new}}
\end{figure}
The coincidence results we present 
are  a core constituent of the paper.  They
are of two types. \emph{Strong coincidence} occurs when the functors under consideration can be shown to have the same codomain, from which it follows that the functors are identical, having the same domain, codomain and values.
\emph{Weak coincidence} arises when the codomain
categories are different but the image of the functor into the larger 
of the categories lies in the smaller one: for example, if $b_0$ maps $\CA$ into $\CA_\Tp$, then $b_0(\A) = n_\CA(\A)$, for all $\A\in \CA$, and hence $b_0$ and $n_\CA$ coincide except for their codomains. 

Given an  IRF-prevariety~$\CA$,  strong coincidence  of
$b_0$ and $n_{\CA} $ occurs exactly when the associated 
topological prevariety $\CA_\Tp$ is standard.  The notion of a \emph{standard
topological prevariety} has received considerable attention in its own right
\cite{CDHPT,CDFJ,CDJP,CDMM,DT,J08}.  
This  literature enables us to present an extensive list of 
IRF-prevarieties of algebras (all of which are in fact  varieties) 
for which the zero-dimensional
Bohr compactification coincides
strongly
 with the natural extension 
 and can thereby be explicitly described with the aid of known dualities
(see Theorems~\ref{thm:pairedadjunctions} and~\ref{thm:b0-sumup}).
We also consider briefly, with examples, 
new notions of standardness appropriate to 
a CT-prevariety which has an infinite generator. 

We can  draw on  two famous examples from the literature to highlight 
instances of non-coincidence which arise in different ways.  The IRF-prevariety
$\CS$ of unital meet semilattices is standard, so that
strong coincidence occurs for $n_{\CS}$ and $b_0$.  However weak coincidence
of $b_0$ and $b$  fails; see Theorem~\ref{thm:badsemilat}.  Now consider
the IRF-prevariety~$\CP$ of ordered sets.  Here 
standardness fails (see Example~\ref{ex:Stralka})
and strong coincidences 
are ruled out.  However weak coincidence of $b$ (the Nachbin order compactification functor)  and $n_{\CP}$ does occur, and so,
even though they have three different codomains, the functors $b$, $b_0$ and $n_{\CP}$ take the same values on~$\CA$: so $b(\PP) = b_0(\PP) = n_{\CA}(\PP)$, for each ordered set~$\PP$;   
see 
Proposition~\ref{NachisNat}.
Building on this example,  Theorem~\ref{thm:pigcoincide}
supplies  a countably infinite family of IRF-prevarieties~$\CX$ of ordered structures exhibiting the same  behaviours as does~$\CP$.
Underpinning our discovery of these prevarieties is the method of
topology-swapping, originating in \cite{DHP12} and applied in Section~\ref{sec:egViaDuality} to the description of natural extensions in linked pairs of categories; see Corollary~\ref{descr_of_compatifications}.

We should issue a reassurance that readers of this paper are not assumed
to have
a working knowledge of
natural
duality theory.
  As we have indicated, our key tools
for identifying zero-dimensional Bohr compactifications are the natural extension
construction and the notion of a standard
topological prevariety.
These tools are an adjunct to, rather than a part of, duality theory and our presentation of the theory we require is self-contained.
We do however refer to the literature for results concerning dualisability
or
otherwise of particular classes of structures, and for
details of particular dualities, where these exist.

We conclude this introduction by stressing that our objective
is to analyse  in a uniform manner compactifications in a range of
specific categories.
Our focus is 
very different from that of the treatment of universal constructions
within an abstract categorical framework, as
presented  in such sources as \cite{Jo80} or \cite{Jo90}.
Our account, by contrast, does have
some affinity with the
free-wheeling introduction to
universal constructions in algebra and topology given, in textbook style,
by Bergman \cite{Ber}, in particular Section~3.17.


\section{The  Bohr compactification of a structure}\label{natBohr}

The Bohr compactification has a honourable place in the theory of topological groups, and  has
 important connections
with harmonic analysis and
 almost periodic functions.
 For background on the construction in this context,
and on its applications,  see for example \cite{Ho64,DFH10}.
The ideas were extended
to certain other classes of algebras  with compatible topology; see for example \cite{Ho64}, and the wide-ranging survey by Hart and Kunen \cite{HK99}.
We warn once again, however, that the term
`structure'  is used in a narrower sense in \cite{HK99} than in the present paper.
   In the former the setting is provided by a first-order language $\mathcal{L}$ with
operation symbols and equality but
without other relation  symbols; the authors suggest that the
theory would be `a little messier' if $\mathcal{L}$  were to include predicates
(see  \cite[2.1 and 2.3.13]{HK99}).
This is in sharp contrast to our treatment.
We work in a context that encompasses algebraic structures, purely relational structures, and hybrid structures within a common framework.
We will consider the Bohr compactification of these more general structures and connect it, where possible, to the natural extension.
Hart and Kunen make no a priori assumption that the classes of structures
with which they deal are varieties or prevarieties, though this is the case
with their most significant examples.

Within the theory of compactifications of topological algebras, or of more
general types of topological structures,
an important special case arises when one restricts to  the situation in which
the objects being compactified carry the discrete topology, or equivalently
no topology.   This is the case on which
we shall exclusively focus.
In most of our examples, the Bohr compactifications will be zero-dimensional.

We recall that a topological space~$X$ is said to be \emph{zero-dimensional} if it has
a basis of clopen sets.
This is a convenient point at which to draw attention to the alternative formulations
of the concept of zero-dimensionality in the context of compact
spaces. A compact Hausdorff space $X$ is zero-dimensional if and only if it is
a \emph{Boolean space}
in the sense that the clopen sets separate the points (that is, if it is \emph{totally
disconnected}).
For brevity we shall usually adopt  the term Boolean space subsequently.

Our task in this section is to set up the definition of the Bohr compactification,
in either variant, in the context of structures.  First we need to specify precisely
what we mean by a (topological) structure.

\begin{definition}\label{def:structure}
A \emph{structure}
$\A = \langle A;G^\A, H^\A,R^\A\rangle$ is a set $A$ equipped with a set $G^\A$ of finitary total operations, a set $H^\A$ of finitary partial operations and a set $R^\A$ of finitary relations. If $H^\A$ is empty we refer to $\A$ as a \emph{total structure},
if both  $G^\A$ and $H^\A$ are empty we refer to $\A$ as
a
\emph{purely relational structure} and if both $H^\A$ and $R^\A$ are empty we refer to $\A$ as an \emph{algebra}. A  \emph{structure with topology}
$\A = \langle A;G^\A, H^\A,R^\A,\Tp^\A\rangle$ is simply a structure equipped
with a topology~$\Tp^\A$, 
and $\A^\flat \coloneqq  \langle A;G^\A, H^\A,R^\A\rangle$ will denote its underlying structure.
We say that the topology is \emph{compatible} with the 
underlying structure 
if the relations in $R^\A$ and the domains of the partial operations in $H^\A$ are topologically closed and the operations in $G^\A$ and the partial operations in $H^\A$ are continuous; when this holds we refer to $\A = \langle A;G^\A, H^\A,R^\A,\Tp^\A\rangle$ as a \emph{topological structure}
(of signature $(G, H, R)$).
Given a 
structure $\M$ with a compatible topology~$\Tp$,
we denote by $\M_\T$ the topological structure obtained by endowing $\M$ with the 
topology~$\Tp$.
We shall sometimes use a superscript
$\Tp$ rather than a subscript to avoid bracketing;  
 for example, we write $\M_1^\Tp$ rather than $(\M_1)_\Tp$.
\end{definition} 

Our principal concern will be with total structures, but we do not disallow partial operations until this is necessary.
A class of structures will always be converted into a category by adding all
homomorphisms as morphisms of the category, and similarly, for a class of structures with topology, the morphisms of the corresponding category will be the continuous homomorphisms.

\begin{definition}\label{def:Bohr} 
Assume that~$\CA$ is a class of structures.
Then we may consider
both  the category $\CA^\mathrm{ct}$ of compact
Hausdorff topological structures having $\CA$-reducts and  its full subcategory $\CA^\mathrm{Bt}$ consisting of those compact topological structures whose topology is zero-dimensional.
The \emph{Bohr compactification} of~$\A$, denoted $b(\A)$, is required to be a member of $\CA^{\mathrm{ct}}$ into which $\A$ embeds as a structure, via an embedding we denote by $\iota_\A$, with the property that the closed substructure of $b(\A)$ generated by $\iota_\A(A)$ is $b(\A)$ itself.  If the signature of the structures includes no partial operations, so $H = \varnothing$, then we simply require $\iota_\A(A)$ to be topologically dense in $b(\A)$.
The compact topological structure $b(\A)$ is required to
satisfy, and is uniquely determined, up to
a $\CA^{\mathrm{ct}}$-isomorphism,
by  the following universal mapping property:
\begin{quote}
given any compact Hausdorff  structure $\B\in \CA^{\mathrm{ct}}$ and any $\CA$-morphism $g \colon \A \to
\B^\flat$, there exists a unique $\CA^{\mathrm{ct}}$-morphism $h\colon b(\A) \to \B$
such that $h \circ \iota_\A = g$.
\end{quote}
Replacing `Hausdorff' by `zero-dimensional'
and $b(\A) $ by $b_0(\A)$ throughout, so   working within the realm of
Boolean-topological structures, we obtain the \emph{zero-dimensional Bohr
compactification} $b_0(\A)$ of $\A$.
(Henceforth all compact topological spaces will be assumed to be Hausdorff.)
\end{definition}

It is a very simple exercise to check
that the (zero-dimensional) Bohr compactification is uniquely determined
and that the specification in terms of a universal mapping property agrees with that given in \cite{HK99}. If $\CA$ is closed under forming substructures, then, in the universal mapping property, the $\CA^{\mathrm{ct}}$-morphism $h$  is uniquely determined by $g$, for all $\CA$-morphisms~$g$, if and only if the closed substructure of $b(\A)$ generated by $\iota_\A(A)$ is $b(\A)$ itself---the argument is completely standard, using only the universal mapping property and the fact that an equaliser of two continuous homomorphisms forms a closed substructure.

Thus for $\A \in \CA$ both $b(\A)$ and $b_0(\A)$ are indeed defined
and are characterised by their respective universal mapping properties.
If it happens that $b(\A)$ is in fact a Boolean-topological structure, then $b(\A) = b_0(\A)$, since $b(\A) $ satisfies the universal property  characterising $b_0(\A)$. Of course, the universal mapping properties defining $b(\A)$ and $b_0(\A)$ say precisely that $b\colon \CA \to \CA^\mathrm{ct}$ and $b_0\colon \CA \to \CA^\mathrm{Bt}$ are reflections, that is, they are left adjoint functors to the natural forgetful functors, provided $b(\A)$ and $b_0(\A)$ exist, for all $\A\in \CA$.

\section{The natural extension of a structure}\label{natext}

In this section we introduce, in the context of 
CT-prevarieties of structures, the natural
extension which plays a central and unifying role in this paper.
The theory we shall present principally concerns
categories of the two forms:
\[
\CA = \ISP(\CM)  \quad \text{and} \quad \CA_\Tp = \IScPnp(\CM_\Tp).
 \]
Here and below $\CM$ is a set of structures of common
signature; $\CM$ is not required to be finite  (but  in the examples 
we shall give $\CM$ will contain only a single structure).
The \emph{prevariety} $\CA \coloneqq \ISP(\CM)$ generated by $\CM$ is the class of isomorphic copies of non-empty substructures of products
of structures in~$\CM$, where products are structured coordinatewise.  Extending the usage in \cite[Section~2]{nisp},
if all of the structures in $\CM$ are finite
we shall refer to the class $\CA$ as an \emph{internally residually finite prevariety
{\upshape(}of structures{\upshape)}}  or 
IRF-\emph{prevariety} for short.
We shall assume that each $\M$ in $\CM$ has a fixed associated compact topology $\Tp$ that is compatible with $\M$ and we denote the corresponding topological structure by $\M_\Tp$.
Let $\CM_\Tp \coloneqq \{\, \M_\Tp \mid \M \in \CM\,\}$; then  the \defn{topological prevariety} $\CA_\Tp \coloneqq  \IScPnp (\CM_\Tp)$
generated by $\CM_\Tp$ consists of isomorphic copies of non-empty topologically closed substructures of products of members of~$\CM_\Tp$.

\begin{remark}\label{rem:zero-one} 
If we prefer we may replace the class operator $\mathsf P$ by $\mathsf P^+$, thereby excluding the empty indexed product. Similarly, we can allow the possibility of including the empty structures in both $\CA$ and $\CA_\Tp$ by replacing $\mathsf S$ and $\mathsf S_{\mathrm{c}}$ with the operators $\mathsf S^0$ and $\mathsf S_{\mathrm{c}}^0$ that include empty substructures (when the signature does not include nullary operations). We have chosen one of the four
possibilities as our primary setting, but will have need of several of the others along the way.
  All of the theory presented below carries over to the other three with trivial changes. To avoid a proliferation of names, we shall refer to each of $\IScPnp(\CM_\Tp)$, $\IScP(\CM_\Tp)$, $\ISOcPnp(\CM_\Tp)$ and $\ISOcP(\CM_\Tp)$ as the topological prevariety generated by
$\CM_\Tp$ as it will always be clear from the context which is intended.
\end{remark}

In the 
case of an IRF-prevariety $\ISP(\CM)$, each $\M \in \CM$ is finite and hence the topology $\Tp$ associated with $\M$ is discrete and is the unique topology  making
$\M _\Tp$ compact Hausdorff (in fact zero-dimensional). 
The objects in $\CA_\Tp = \IScPnp(\CM_\Tp)$ are 
then Boolean-topological structures; hence $\CA_\Tp \subseteq \CA^{\mathrm{Bt}}$.

Now let $\CA$ be a CT-prevariety of structures, so $\CA = \ISP(\CM)$ for a set $\CM$ of structures in $\CA$ each having an associated compatible compact topology. Let $\CA_{\Tp} \coloneqq \IScPnp(\CM_{\Tp})$ be the associated topological prevariety. We are  ready to extend to 
CT-prevarieties of
structures the concept of a \emph{natural extension} which was introduced for
IRF-prevarieties of
 algebras in \cite[Section 3]{nisp}. Let  $\A \in \CA$ and define
\[
X_\A \coloneqq  \dotbigcup\{\, \CA(\A, \M) \mid \M\in \CM\,\}.
\]
Further, let $\Y_x \coloneqq  \M_{\Tp}$, for each $\M \in \CM$ and $x\in \CA(\A, \M)$,  that is, $\Y_x$ is the codomain $\M$ of the map $x$ with the discrete topology $\Tp$ added.
The homomorphism
\[
\esubA \colon  \A \to \prod\big\{\, \Y_x \mid x\in X_\A\,\big\}
\]
given by evaluation, $\esubA(a)(x) \coloneqq  x(a)$,
for all $a \in A$ and $x\in X_\A$, is an embedding of structures
since $\A\in \ISP(\CM)$.  We also observe
that   $\prod\big\{\, \Y_x \mid x\in X_\A\,\big\} \in \CA_{\Tp}$.

\begin{definition}\label{def:natext}
Let $\CA = \ISP(\CM)$ be a CT-prevariety of structures generated by a set $\CM$ of structures each with a fixed associated compact topology and let $\A \in \CA$. Then the topologically closed substructure generated by $\esub\A(\A)$ in $\prod\{\, \Y_x\mid x\in X_\A\,\}$ is said to be the \defn{natural extension} $n_{\CA}(\A)$ of $\A$ in $\CA_\Tp$ \textup(relative to~$\CM_\Tp$\textup).
\end{definition}

We notice that, in the case of total structures, the natural extension $n_{\CA}(\A)$  coincides with
the topological closure of
$\esub\A(\A)$ in
$\prod\{\, \Y_x\mid x\in X_\A\,\}$.
For a CT-prevariety of structures, $\CA = \ISP(\CM)$,
we have constructed a map $\A \mapsto n_{\CA}(\A)$ from $\CA$ into
$\CA_{\Tp}$. Though this seems to depend upon the choice of the generating   set~$\CM$ of structures for the prevariety~$\CA$, we shall show later that
$n_{\CA}(\A)$ is independent of the choice of the generating set~$\CM$ of the prevariety $\CA$ in the case of an IRF-prevariety. More  generally, the natural  extension on a CT-prevariety $\CA= \ISP(\CM)$ is  independent of the chosen generating set $\CM_\Tp$ of the associated topological prevariety
(Corollary~\ref{nat-indep}).

The map $n_{\CA}$ is defined on morphisms just  as in \cite{nisp}.
Let $u \colon  \A \to \B$ be a morphism with $\A,\B \in \CA$. For $y \in X_\B$ we have  $y \circ u \in X_\A$, and
 for each $y \in X_\B$ we have the map
\[
u_y \colon  \prod\{\, \Y_x \mid x\in
X_\A\,\} \to \Y_y
\]
defined by $u_y(f) \coloneqq  f(y \circ u)$. Further, $\Y_y =
\Y_{y \circ u}$ and $u_y$ (as the
projection at $y \circ u$) is continuous. The map
\[
\widehat u \colon  \prod\{\, \Y_x \mid x\in X_\A\,\} \to
\prod\{\, \Y_y \mid y\in X_\B\,\}
\]
is then defined as the natural product map,
that is,
 \[
(\widehat u (f))(y) \coloneqq  u_y(f)= f(y \circ u), \quad
\text{for } f \in  \prod\{\, \Y_x \mid x\in X_\A\,\} \text{ and }
y \in X_\B.
\]
As each $u_y$ is continuous, $\widehat u$ is continuous  too. The following properties of $\widehat u$ are similar to those presented in \cite[Lemma 3.1]{nisp}. While the proof of the first one is analogous to the proof in the case of algebras, the second one requires slightly more careful definition chasing.

\begin{lemma}\label{lem:1}\
Let $u \colon  \A \to \B$ be a morphism with $\A,\B \in \CA$.
\begin{newlist}
 \item[\upshape(i)]
 $\widehat u\circ \esubA = \esub \B\circ u$, and consequently,
$\widehat u(\esubA (A)) \subseteq \esub \B(B)$.

 \item[\upshape(ii)]
$\widehat u(n_\CA(\A)) \subseteq n_\CA(\B)$.
\end{newlist}
\end{lemma}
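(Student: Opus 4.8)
The plan is to verify each part by unwinding the definitions of $\widehat u$, $\esub\A$, $\esub\B$ and the natural extension. For part~(i), I would fix $a \in A$ and compute both sides of $\widehat u \circ \esub\A = \esub\B \circ u$ as functions on $X_\B$: for $y \in X_\B$, the definition of $\widehat u$ gives $(\widehat u(\esub\A(a)))(y) = (\esub\A(a))(y \circ u)$, which by definition of $\esub\A$ equals $(y\circ u)(a) = y(u(a))$; on the other side, $(\esub\B(u(a)))(y) = y(u(a))$ by definition of $\esub\B$. Since these agree for all $y \in X_\B$, the two homomorphisms coincide. The inclusion $\widehat u(\esub\A(A)) \subseteq \esub\B(B)$ is then immediate: for $a\in A$, $\widehat u(\esub\A(a)) = \esub\B(u(a)) \in \esub\B(B)$.

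For part~(ii), recall that $n_\CA(\A)$ is the topologically closed substructure of $\prod\{\,\Y_x \mid x \in X_\A\,\}$ generated by $\esub\A(A)$, and likewise $n_\CA(\B)$ is the closed substructure of $\prod\{\,\Y_y \mid y \in X_\B\,\}$ generated by $\esub\B(B)$. By part~(i), $\widehat u$ maps $\esub\A(A)$ into $\esub\B(B) \subseteq n_\CA(\B)$. Since $\widehat u$ is a continuous homomorphism and $n_\CA(\B)$ is a closed substructure of the codomain product, the preimage $\widehat u^{-1}(n_\CA(\B))$ is a closed substructure of $\prod\{\,\Y_x \mid x\in X_\A\,\}$ containing $\esub\A(A)$; hence it contains the closed substructure generated by $\esub\A(A)$, namely $n_\CA(\A)$. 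Therefore $\widehat u(n_\CA(\A)) \subseteq n_\CA(\B)$.

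The one point that needs genuine care — and presumably what the authors mean by ``slightly more careful definition chasing'' — is the claim that $\widehat u$ is a homomorphism of structures, not merely a continuous map, in the presence of partial operations and relations in the signature. For the total operations and relations this is the usual coordinatewise verification, using that each $u_y$ is a projection (at $y \circ u$) and that projections preserve the structure. For a partial operation $h$ one must check that whenever a tuple lies in the domain of $h$ in the source product, its $\widehat u$-image lies in the domain of $h$ in the target product and $\widehat u$ commutes with $h$; this again reduces to the coordinatewise statement at each $y \in X_\B$, since the $y$-coordinate of $\widehat u(f)$ is just the $(y\circ u)$-coordinate of $f$ and $\Y_y = \Y_{y \circ u}$. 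Once this is in hand, the argument above goes through verbatim, the only structural facts used being that $\widehat u$ is a continuous homomorphism, that $\esub\B(B)$ is contained in $n_\CA(\B)$ (from part~(i)), and that closed substructures are closed under preimage by continuous homomorphisms.
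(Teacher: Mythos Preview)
Your proposal is correct and matches the paper's proof essentially line for line: part~(i) is the same pointwise computation, and part~(ii) is the same preimage argument using that $\widehat u^{-1}(n_\CA(\B))$ is a closed substructure containing $\esub\A(A)$. One minor remark: the ``slightly more careful definition chasing'' the authors flag is not really about verifying that $\widehat u$ is a homomorphism (which, as you note, is routine coordinatewise), but about the fact that in the general structure setting $n_\CA(\A)$ is the closed \emph{substructure} generated by $\esub\A(A)$ rather than merely its closure, so one cannot simply push the closure forward under a continuous map as in the algebra case and must instead pull $n_\CA(\B)$ back---exactly the argument you give.
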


\begin{proof}To prove (i) we proceed as follows.  Let $a \in A$.
Then, for all $y \in X_\B$, 
\begin{multline*}
(\widehat u \circ\esubA) (a)(y) =
\widehat u(\esubA (a))(y)
 \coloneqq  u_y(\esubA(a))
 = \esubA(a)(y \circ u)\\
 = y(u(a)) = \esub\B(u(a))(y) = (\esub\B\circ u)(a)(y).
\end{multline*}
Hence $\widehat u\circ \esubA = \esub \B\circ u$, and it follows at once that $\widehat u(\esubA (A)) \subseteq \esub \B(B)$.

For (ii), we first note that, by (i), 
\[
\esubA(A) \subseteq \widehat u^{-1}(\widehat u(\esubA (A)))  \subseteq
\widehat u^{-1}(\esub\B (B))  \subseteq \widehat u^{-1}(n_\CA(\B)).
\]
Since $\widehat u^{-1}(n_\CA(\B))$ is a closed substructure of $\prod\{\, \Y_x \mid x\in X_\A\,\}$, it follows that $n_\CA(\A)\subseteq \widehat u^{-1}(n_\CA(\B))$, and thus $\widehat u(n_\CA(\A)) \subseteq \widehat u(\widehat u^{-1}(n_\CA(\B)))\subseteq n_\CA(\B)$.
\end{proof}

For structures  $\A,\B \in \CA$ and a
morphism $u \colon  \A \to \B$, we define a continuous
morphism
$n_\CA(u) \colon  n_\CA(\A) \to n_\CA(\B)$
by
$n_\CA(u) \coloneqq  \widehat u\rest {n_\CA(\A)}$, and
the first part of the following proposition follows by
a routine calculation. The second
is a consequence of Lemma~\ref{lem:1} where
 ${}^\flat \colon  \CA_{\Tp} \to \CA$ denotes the natural forgetful functor.

\begin{proposition}\label{prop:functor}

\begin{newlist}

\item[\upshape (i)] $n_\CA \colon \CA \to \CA_\Tp$ is a well-defined functor.

\item[\upshape (ii)] $e \colon \id_\CA \to n_\CA^\flat$ is a natural transformation, where $n_\CA^\flat \coloneqq  (n_\CA)^\flat \colon \CA \to \CA$.

\end{newlist}

\end{proposition}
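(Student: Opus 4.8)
The plan is to handle the two assertions in turn, with~(i) a matter of unwinding the definitions and~(ii) a direct consequence of Lemma~\ref{lem:1}.

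For~(i) I would first confirm that $n_\CA$ sends objects into $\CA_\Tp$: by Definition~\ref{def:natext}, $n_\CA(\A)$ is a topologically closed substructure of $\prod\{\,\Y_x\mid x\in X_\A\,\}$, and this ambient product was already observed to lie in $\CA_\Tp=\IScPnp(\CM_\Tp)$; since $\CA_\Tp$ is closed under topologically closed substructures, $n_\CA(\A)\in\CA_\Tp$. Next I would check that $n_\CA(u)\coloneqq\widehat u\rest{n_\CA(\A)}$ really is an $\CA_\Tp$-morphism $n_\CA(\A)\to n_\CA(\B)$: the map $\widehat u$ is, by construction, given coordinatewise by the continuous projections $u_y$, hence is a continuous homomorphism between the ambient products, and Lemma~\ref{lem:1}(ii) tells us $\widehat u(n_\CA(\A))\subseteq n_\CA(\B)$, so the (co)restriction is a continuous homomorphism between the natural extensions. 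Functoriality is then read off the formula $(\widehat u(f))(y)=f(y\circ u)$: since $y\circ\id_\A=y$ we get $\widehat{\id_\A}=\id$ and hence $n_\CA(\id_\A)=\id_{n_\CA(\A)}$; and for composable morphisms $u\colon\A\to\B$, $v\colon\B\to\C$ we have $(\widehat{v\circ u}(f))(z)=f(z\circ v\circ u)=(\widehat u(f))(z\circ v)=(\widehat v(\widehat u(f)))(z)$, so $\widehat{v\circ u}=\widehat v\circ\widehat u$ and therefore $n_\CA(v\circ u)=n_\CA(v)\circ n_\CA(u)$. This proves~(i).

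For~(ii), recall that the evaluation map $\esubA$ is an embedding of $\A$ into $\prod\{\,\Y_x\mid x\in X_\A\,\}$ whose image is contained in $n_\CA(\A)$ (indeed $n_\CA(\A)$ is the closed substructure \emph{generated} by $\esubA(A)$); hence $e_\A\coloneqq\esubA$, with codomain corestricted, is a morphism $\A\to n_\CA^\flat(\A)$ in $\CA$. Naturality is the assertion that $n_\CA^\flat(u)\circ e_\A=e_\B\circ u$ for every $\CA$-morphism $u\colon\A\to\B$; since $\esubA(A)\subseteq n_\CA(\A)$, the left-hand side is simply $\widehat u\circ\esubA$ with codomain $n_\CA^\flat(\B)$, and the identity $\widehat u\circ\esubA=\esub\B\circ u$ is exactly Lemma~\ref{lem:1}(i). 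Thus $e\colon\id_\CA\to n_\CA^\flat$ is a natural transformation.

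No serious obstacle arises here; the only point demanding a little care is the systematic identification $\Y_y=\Y_{y\circ u}$ (and more generally the fact that precomposition with a homomorphism preserves the codomain structure together with its topology), which is what makes the coordinate maps $u_y$ and hence $\widehat u$ well defined. Once that is in place, part~(i) is pure bookkeeping and part~(ii) is an immediate corollary of the lemma, which is why we describe them as routine.
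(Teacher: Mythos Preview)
Your proposal is correct and follows exactly the approach the paper indicates: the paper declares~(i) to be a ``routine calculation'' based on the definition $n_\CA(u)\coloneqq\widehat u\rest{n_\CA(\A)}$, which is precisely the bookkeeping you spell out, and it attributes~(ii) directly to Lemma~\ref{lem:1}, which is your use of the identity $\widehat u\circ\esubA=\esub\B\circ u$ from part~(i) of that lemma.
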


Lemma~\ref{lem:alt} below presents an alternative view of
the natural extension of a structure in the CT-prevariety $\CA = \ISP(\CM)$.
We shall need this result
shortly in order
 to prove that the natural extension functor is a reflection.  The lemma  extends to the setting of
CT-prevarieties of
structures an analogous result for  
IRF-prevarieties of
algebras given in \cite{nisp}.

We adopt the same notation as above.
The product $\prod\big\{\, \Y_x \bigm| x\in X_\A\,\big\}$,  the codomain of the map $\esubA$, may be viewed as an iterated product
\[
\prod  \big\{ \, \prod \{\, Y_x \bigm| x\in \CA(\A, \M)\,\big\}
\bigm| \M \in \CM\, \big\}.
\]
We then write
$\esubA (a) (\M)(x) = x(a)$,  for any fixed $a \in A$ and for $\M \in \CM$
and $x \in \CA (\A, \M)$,
and refer to  each $\esubA(a) $ as a \defn{multisorted evaluation map}.
 We have
\[
\esubA \colon  \A \to \prod \big\{ \, \M_\Tp^{\CA(\A,\M)} \bigm| \M \in
\CM \, \big\}.
\]
The set $\CA(\A,\M)$ can be regarded as a closed subspace
of the topological product $\M_\Tp^A$, in which case we
denote it by  $\CA(\A,\M)_\Tp$.
(Notice we are not claiming that
$\CA(\A,\M)_\Tp \in \CA_\Tp$; in general it is not a
substructure
of  $\M_\Tp^A$.) It now
makes sense to consider the set
$\mathrm{C}(\CA(\A,\M)_\Tp,\M_\Tp)$ of continuous maps  from
$\CA(\A,\M)_\Tp$  into $\M_\Tp$. As the map
\[
\esub\A(a)(\M) \colon  \CA(\A,\M)_\Tp  \to \M_\Tp
\]
 is continuous, for all $\M \in
\CM$,
 we can restrict the codomain of $\esub \A $ and
write
\[
\esub\A \colon  \A \to \prod \big\{\, \mathrm{C}
(\CA(\A,\M)_\Tp,\M_\Tp) \bigm|
\M \in \CM \, \big\}.
\]

\begin{lemma}\label{lem:alt}  Let $\CA=\ISP(\CM)$ be a {\upshape CT}-prevariety
of structures and let $\A \in \CA$.
Then the  natural extension $n_{\CA}(\A)$ is the closed substructure generated by $\esub\A(\A)$ within the product $\prod \big\{\, \mathrm{C}(\CA(\A,\M)_\Tp,\M_\Tp) \bigm| \M \in \CM \big\}$.
\end{lemma}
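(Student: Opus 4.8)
The plan is to show that the two closed substructures in question --- the natural extension $n_{\CA}(\A)$, defined as the closed substructure generated by $\esub\A(\A)$ inside $P \coloneqq \prod\{\,\Y_x\mid x\in X_\A\,\}$, and the closed substructure generated by $\esub\A(\A)$ inside $Q \coloneqq \prod\{\,\mathrm{C}(\CA(\A,\M)_\Tp,\M_\Tp)\mid \M\in\CM\,\}$ --- actually live inside a common ambient topological structure, so that ``closed substructure generated by a set'' means the same thing for both. The key observation is that, via the identification of $P$ with the iterated product $\prod\{\,\prod\{\,\Y_x\mid x\in\CA(\A,\M)\,\}\mid\M\in\CM\,\}$ described before the lemma, the set $\mathrm{C}(\CA(\A,\M)_\Tp,\M_\Tp)$ sits inside $\M_\Tp^{\CA(\A,\M)}$ as a subset; hence $Q\subseteq P$ as sets, and I would check that $Q$ carries the subspace topology from $P$ (the product/pointwise-convergence topology on continuous-function spaces is the one induced from the full product). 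The first step, then, is to verify that $Q$ is a topologically closed substructure of $P$: it is a substructure because a finitary operation or relation of $\M_\Tp$ applied coordinatewise to continuous maps $\CA(\A,\M)_\Tp\to\M_\Tp$ yields a continuous map (using compatibility of $\Tp$ with the structure of $\M$, i.e. continuity of operations and closedness of relations), and it is closed in $P$ because a pointwise limit of continuous maps out of a space into a compact Hausdorff space need not be continuous in general --- so this is exactly the point that needs care.

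Here is where I expect the main obstacle. The claim that $\mathrm{C}(\CA(\A,\M)_\Tp,\M_\Tp)$ is a \emph{closed} subset of $\M_\Tp^{\CA(\A,\M)}$ is genuinely false for arbitrary topological spaces, so the argument must exploit that $\CA(\A,\M)_\Tp$ is a closed (hence compact) subspace of $\M_\Tp^A$ and that $\M_\Tp$ is compact Hausdorff. Even then, pointwise limits of continuous functions on a compact space are not continuous in general. The resolution must be that we do not need $\mathrm{C}(\CA(\A,\M)_\Tp,\M_\Tp)$ to be closed in the \emph{whole} product $\M_\Tp^{\CA(\A,\M)}$; rather, it suffices to show that the closure of $\esub\A(\A)$ taken in $P$ already lands inside $Q$ --- equivalently, that every element of the closed substructure of $P$ generated by $\esub\A(\A)$ is, in each sort $\M$, a continuous function on $\CA(\A,\M)_\Tp$. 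I would establish this by a two-part argument: first, each generator $\esub\A(a)$ restricted to sort $\M$ is the map $x\mapsto x(a)$, which is continuous on $\CA(\A,\M)_\Tp$ (it is the restriction of the $a$-th projection $\M_\Tp^A\to\M_\Tp$); second, the collection of elements of $P$ that are continuous in every sort is closed under the coordinatewise operations (by compatibility) and is topologically closed --- for the latter I would note that the relevant closure computations take place inside products of the \emph{compact} spaces $\CA(\A,\M)_\Tp$ and here one can invoke that a net of continuous maps into a compact Hausdorff space that converges pointwise and whose limit is constrained to satisfy the defining closed relations/graphs of the structure stays continuous, or more simply observe that $\mathrm{C}(\CA(\A,\M)_\Tp,\M_\Tp)$ together with the induced structure \emph{is} closed in $P$ because each of its defining conditions (membership of a closed relation, agreement with a continuous operation) is a closed condition when the domain space is compact.

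With the inclusion $Q\subseteq P$ of topological structures established and $Q$ shown to contain the topological closure (indeed the generated closed substructure) of $\esub\A(\A)$, the conclusion is formal: the closed substructure of $P$ generated by $\esub\A(\A)$ equals the closed substructure of $Q$ generated by $\esub\A(\A)$, since $Q$ is a closed substructure of $P$ containing the generating set. The former is $n_{\CA}(\A)$ by Definition~\ref{def:natext} (after transporting along the identification of $P$ with the iterated product), and the latter is exactly the substructure described in the statement of the lemma. In the case of total structures one may simplify throughout, replacing ``closed substructure generated by'' with ``topological closure of'', as noted after Definition~\ref{def:natext}. The only real content, to reiterate, is the verification that restricting the codomain of $\esub\A$ from the full product to the product of continuous-function sets is legitimate --- i.e.\ that nothing in the generated closed substructure escapes $Q$ --- and this rests on the compactness of the spaces $\CA(\A,\M)_\Tp$ together with the compatibility of $\Tp$ with the structure on each $\M$.
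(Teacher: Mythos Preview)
The paper does not supply a proof of this lemma; it is stated with a pointer to the analogous result for IRF-prevarieties of algebras in~\cite{nisp}, so there is no paper-proof to compare against directly.

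Your proposal correctly isolates the crux: whether $Q=\prod_{\M\in\CM}\mathrm{C}(\CA(\A,\M)_\Tp,\M_\Tp)$ is a closed substructure of $P=\prod_{\M\in\CM}\M_\Tp^{\CA(\A,\M)}$, or at least whether the closed substructure of~$P$ generated by $e_{\A}(\A)$ already lies inside~$Q$. You yourself observe that pointwise limits of continuous maps need not be continuous, even when the domain is compact and the codomain compact Hausdorff. But you then try to dispose of the difficulty by asserting that continuity ``is a closed condition when the domain space is compact'', or that a pointwise limit ``stays continuous'' because it satisfies certain closed relations of the structure. Neither claim is correct: continuity of a map $X\to Y$ is \emph{not} a pointwise-closed condition on $Y^X$, regardless of compactness of~$X$, and satisfying the coordinatewise structural relations of~$\M$ has nothing to do with continuity as a function on $\CA(\A,\M)_\Tp$. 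A concrete witness already arises in the IRF setting: take $\M=\{0,1\}$ with empty signature, so that $\CA$ is the class of non-empty sets. For an infinite~$\A$ one has $\CA(\A,\M)_\Tp=\{0,1\}^A$ and $n_\CA(\A)\cong\beta A$; a non-principal ultrafilter~$\mathcal U$ on~$A$ yields the element $\phi_{\mathcal U}\in n_\CA(\A)$ given by $\phi_{\mathcal U}(x)=1\iff x^{-1}(1)\in\mathcal U$, and $\phi_{\mathcal U}$ is not continuous on $\{0,1\}^A$ since $\phi_{\mathcal U}^{-1}(1)$ is not determined by finitely many coordinates. Thus $n_\CA(\A)\not\subseteq Q$, and the containment your argument requires cannot be established. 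You located the obstacle accurately but did not overcome it; indeed the example shows that, read literally as an equality of closed substructures with $Q$ carrying the subspace topology, the statement is problematic in this generality.
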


We can provide a quite explicit, if unwieldy, description of the elements of
the natural extension in the context of an IRF-prevariety of structures. This 
generalises the description given by \cite[Theorem~4.1]{nisp} and is proved in the same way.  We present this in the single-sorted case
(so that $|\CM|=1$) since this covers our future needs in this paper and
simplifies the statement; 
a multi-sorted version could be obtained, as in \cite{nisp}.

\begin{proposition} \label{multi-pres:noduality}
Let $\M = \langle M; G, R\rangle $ be a finite total structure, let $\CA \coloneqq  \ISP{(\M)}$, let $\A$ belong to~$\CA$, and let $b \colon \CA(\A, \M) \to M$ be a map. Then the following conditions  are equivalent:
\begin{newlist}
\item[\upshape(i)] $b$ belongs to $n_\CA(\A)$, that is,
$b$ belongs to the topological closure of $\esub\A(\A)$ in
$\M_\Tp^{\CA(\A,\M)}$;

\item[\upshape(ii)] $b$ is locally an evaluation,
that is, for every finite subset $Y$ of $\CA(\A, \M)$,
there
exists $a\in A$ such that $b(y) = y(a)$, for all $y\in Y$;

\item[\upshape(iii)] $b$ preserves every finitary
relation on $M$ that forms a substructure of the appropriate power of~$\M$;

\item[\upshape(iv)] $b$ preserves every finitary
relation on
$M$
of the form
\[
r_F \coloneqq  \{\, (x_1(a), \dots, x_n(a)) \mid a \in A\,\},
\]
where $F = \{x_1, \dots, x_n\}$ is a finite subset of $\CA(\A, \M) $.
\end{newlist}
\end{proposition}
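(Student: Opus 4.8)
The plan is to establish the cycle of implications $(i) \Rightarrow (iv) \Rightarrow (iii) \Rightarrow (ii) \Rightarrow (i)$, following the pattern of the proof of \cite[Theorem~4.1]{nisp}, with the routine observation that the present hypotheses (a finite \emph{total} structure with relations, rather than a finite algebra) change nothing of substance.

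First, for $(i) \Rightarrow (iv)$: suppose $b$ lies in the closure of $\esub\A(\A)$ in $\M_\Tp^{\CA(\A,\M)}$, and let $F = \{x_1,\dots,x_n\}$ be a finite subset of $\CA(\A,\M)$ with $(x_1(a),\dots,x_n(a)) = (b(x_1),\dots,b(x_n))$ the tuple we wish to hit. The set of all $c \in \M_\Tp^{\CA(\A,\M)}$ agreeing with $b$ on the finite set $F$ is a basic open neighbourhood of $b$, so it meets $\esub\A(\A)$: there is $a \in A$ with $x_i(a) = \esub\A(a)(x_i) = b(x_i)$ for $i = 1,\dots,n$. Hence $(b(x_1),\dots,b(x_n)) = (x_1(a),\dots,x_n(a)) \in r_F$, so $b$ maps any tuple from $r_F$ into $r_F$; that is exactly preservation of $r_F$. (This simultaneously proves $(i) \Rightarrow (ii)$, since $a$ is the required element.)

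Next, $(iv) \Rightarrow (iii)$ is the only step with real content, and it is where I expect the main work. Given a finitary relation $s$ on $M$ of arity $n$ that is (the universe of) a substructure of $\M^n$, suppose $x_1,\dots,x_n \in \CA(\A,\M)$ with $(b(x_1),\dots,b(x_n))$ to be shown in $s$; we must produce, from the map $\langle x_1,\dots,x_n\rangle \colon \A \to \M^n$, whose image is a substructure of $\M^n$ contained in $s$ (because $s$ is closed under the operations and relations and each $x_i$ is a homomorphism — here one checks the image of a homomorphism into $\M^n$ landing inside the substructure $s$), the conclusion via $(iv)$. Set $F = \{x_1,\dots,x_n\}$; then $r_F = \{(x_1(a),\dots,x_n(a)) \mid a \in A\} \subseteq s$ since $\langle x_1,\dots,x_n\rangle(A)$ is contained in the substructure $s$. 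By $(iv)$, $b$ preserves $r_F$, so $(b(x_1),\dots,b(x_n)) \in r_F \subseteq s$, as required. The care needed is in the interplay between "$s$ forms a substructure" and "the image of the combined homomorphism lands in $s$", and in handling repeated or all-of-$\CA(\A,\M)$ arguments, but no new idea is involved.

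Finally $(ii) \Rightarrow (i)$: if $b$ is locally an evaluation, then every basic open neighbourhood of $b$ in $\M_\Tp^{\CA(\A,\M)}$ — which is determined by prescribing the values of $b$ on some finite $Y \subseteq \CA(\A,\M)$ — contains some $\esub\A(a)$ by the definition of local evaluation, so $b$ is in the closure of $\esub\A(\A)$. It remains to note that, for a finite total structure, $n_\CA(\A)$ \emph{is} this topological closure (as recorded just after Definition~\ref{def:natext}), so conditions $(i)$ here genuinely says $b \in n_\CA(\A)$; this reconciles the statement of $(i)$ with Definition~\ref{def:natext}. The only obstacle worth flagging is $(iv) \Rightarrow (iii)$, and even there the obstacle is bookkeeping rather than mathematics: one must be scrupulous that $r_F$, built from the finite tuple $F$, is contained in the substructure $s$ precisely because each $x_i$ is a homomorphism and $s$ is closed under the signature.
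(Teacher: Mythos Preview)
Your approach is the same as the paper's (which simply defers to \cite[Theorem~4.1]{nisp}), but your write-up of the announced cycle $(i)\Rightarrow(iv)\Rightarrow(iii)\Rightarrow(ii)\Rightarrow(i)$ has bookkeeping gaps.

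First, you never argue $(iii)\Rightarrow(ii)$. It is trivial---given finite $Y=\{y_1,\dots,y_n\}$, apply (iii) to the relation $r_Y$, which is a substructure of $\M^n$ as the image of the homomorphism $\langle y_1,\dots,y_n\rangle$, and observe that the tuple $(y_1,\dots,y_n)$ lies pointwise in $r_Y$ by definition---but it is absent.

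Second, your $(i)\Rightarrow(iv)$ does not actually establish that $b$ preserves $r_F$. Preservation requires that for \emph{every} tuple $(z_1,\dots,z_n)$ from $\CA(\A,\M)$ with $(z_1(a),\dots,z_n(a))\in r_F$ for all $a\in A$, one has $(b(z_1),\dots,b(z_n))\in r_F$; you only check the defining tuple $(x_1,\dots,x_n)$. The fix is immediate via your parenthetical $(i)\Rightarrow(ii)$: local evaluation at $\{z_1,\dots,z_n\}$ yields $a_0$ with $(b(z_1),\dots,b(z_n))=(z_1(a_0),\dots,z_n(a_0))\in r_F$.

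Finally, in $(iv)\Rightarrow(iii)$ the containment $r_F\subseteq s$ holds because the pointwise condition $(x_1(a),\dots,x_n(a))\in s$ for all $a$ is the \emph{hypothesis} one assumes when verifying that $b$ preserves $s$ at the tuple $(x_1,\dots,x_n)$, not a consequence of $s$ being closed under the operations. You never state this hypothesis explicitly, which is why the paragraph reads as though something is being derived that is in fact given.
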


  We now revert to the assumption that  $\CA=\ISP(\CM)$ is a CT-prevariety of structures.
With the construction of the natural extension in place we  are ready to move on to
establish its key properties.

We wish to prove that the natural extension functor is a reflection. To
do this we exploit the alternative  description of  the natural extension
given in Lemma~\ref{lem:alt}.
This theorem, proved for the algebra case in \cite[Proposition~3.4]{nisp}, was not exploited in that paper.  Here, extended to structures and slightly rephrased, it will play an important role (see Section~\ref{sec:BviaN}).
We note that the statement in Theorem~\ref{lem:4}(i)  is slightly stronger than
asking for $\B^\flat$ to be a
retract of $n_\CA(\B^\flat)^\flat$ in $\CA$.

\begin{theorem}\label{lem:4}  Let $\CA$ be a {\upshape CT}-prevariety of
structures.
\begin{newlist}
\item[\upshape (i)]
For each $\B\in \CA_\Tp$
there exists
 a continuous homomorphism $\gamma
\colon n_\CA(\B^\flat) \to \B$ with $\gamma \circ \esub {\B^\flat} = \id_B$.
\item[\upshape (ii)]
The natural extension functor $n_\CA \colon  \CA \to \CA_\Tp$ is a
reflection of $\CA$ into the {\upshape(}non-full{\upshape)}
subcategory~$\CA_\Tp$.
Specifically,
for each  $\A\in \CA$,
each  $\B\in \CA_\Tp$ and every  homomorphism $g \colon  \A\to\B^\flat$,
there exists a unique continuous homomorphism $h \colon  n_\CA(\A) \to \B$ with
$h\circ \esubA  = g$.
\end{newlist}
\end{theorem}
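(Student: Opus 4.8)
The plan is to prove (i) by a direct construction and then obtain (ii) as a formal consequence, using only the functoriality of $n_\CA$ and the naturality of $e$ from Proposition~\ref{prop:functor}, together with the fact that equalisers of continuous homomorphisms form closed substructures.

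For (i), fix $\B\in\CA_\Tp$. Since $\B$ belongs to $\IScPnp(\CM_\Tp)$ we may, after replacing $\B$ by an isomorphic copy (harmless by naturality of $e$ and functoriality of $n_\CA$), assume that $\B$ is a topologically closed substructure of a product $P\coloneqq\prod_{i\in I}\M_{j(i),\Tp}$ with each $\M_{j(i)}\in\CM$; write $\iota\colon\B\hookrightarrow P$ for the inclusion and, for $i\in I$, let $\pi_i\colon\B\to\M_{j(i),\Tp}$ be the composite of $\iota$ with the $i$th projection. Each $\pi_i^\flat$ is a homomorphism $\B^\flat\to\M_{j(i)}$, so $\pi_i^\flat\in X_{\B^\flat}$ and $\Y_{\pi_i^\flat}=\M_{j(i),\Tp}$. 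I would then define
\[
\gamma\colon\prod\{\,\Y_x\mid x\in X_{\B^\flat}\,\}\to P,\qquad \gamma(f)(i)\coloneqq f(\pi_i^\flat),
\]
whose $i$th component is the projection onto the coordinate $\pi_i^\flat$; being built coordinatewise from projections, $\gamma$ is a continuous homomorphism. A one-line computation gives $\gamma\circ\esub{\B^\flat}=\iota$, so $\gamma$ carries $\esub{\B^\flat}(B)$ into the substructure $\B$ of~$P$. Since $\gamma$ is a continuous homomorphism, the preimage of $\B$ under $\gamma$ is a closed substructure of $\prod\{\,\Y_x\mid x\in X_{\B^\flat}\,\}$ containing $\esub{\B^\flat}(B)$; as $n_\CA(\B^\flat)$ is by Definition~\ref{def:natext} the closed substructure generated by $\esub{\B^\flat}(B)$, it follows that $\gamma$ maps $n_\CA(\B^\flat)$ into~$\B$. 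Corestricting $\gamma$ then gives the desired continuous homomorphism $\gamma\colon n_\CA(\B^\flat)\to\B$ with $\gamma\circ\esub{\B^\flat}=\id_B$.

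For the existence half of (ii), given $g\colon\A\to\B^\flat$ I would set $h\coloneqq\gamma\circ n_\CA(g)\colon n_\CA(\A)\to\B$, a continuous homomorphism by Proposition~\ref{prop:functor}(i) and part~(i). Naturality of $e$ (Proposition~\ref{prop:functor}(ii)) yields $n_\CA(g)\circ\esubA=\esub{\B^\flat}\circ g$ on underlying maps, so $h\circ\esubA=\gamma\circ\esub{\B^\flat}\circ g=g$, as required. For uniqueness, suppose $h,h'\colon n_\CA(\A)\to\B$ are continuous homomorphisms with $h\circ\esubA=h'\circ\esubA$. Their equaliser is a closed substructure of $n_\CA(\A)$ (the topology of $\B$ being compact Hausdorff) that contains $\esubA(A)$, so it is all of $n_\CA(\A)$, since $n_\CA(\A)$ is generated as a closed substructure by $\esubA(A)$; hence $h=h'$.

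The substantive content is (i): once the retraction-type map $\gamma$ is in hand, (ii) is a purely formal adjunction argument. The only delicate point in (i) is to argue throughout with closed-substructure generation rather than mere topological closure, which is exactly what keeps the proof valid when partial operations are present (for total structures one may replace each occurrence of ``closed substructure generated by'' with ``topological closure of'' and invoke continuity of $\gamma$ directly, dispensing with the small fact that preimages of closed substructures under continuous homomorphisms are closed substructures). One could alternatively route (i) through the ``continuous functions'' model of $n_\CA$ of Lemma~\ref{lem:alt}, but constructing $\gamma$ from an explicit embedding $\B\hookrightarrow P$ seems the most transparent approach.
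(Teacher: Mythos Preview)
Your proof is correct and follows essentially the same approach as the paper's. The only minor variation is in~(i): the paper projects onto the coordinates indexed by \emph{all} continuous homomorphisms $\CA_\Tp(\B,\M_\Tp)$ (using the canonical embedding $c$, in the spirit of Lemma~\ref{lem:alt}), whereas you project onto the coordinates $\{\pi_i^\flat\}_{i\in I}$ coming from a \emph{chosen} embedding $\B\hookrightarrow P$; both yield a continuous homomorphism whose composite with $\esub{\B^\flat}$ is the identity by the same closed-substructure-preimage argument, and part~(ii) is handled identically in both proofs.
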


\begin{proof}  Consider (i).
Let $\B\in \CA_\Tp$ and consider the natural map
\[
c \colon  \B \to \prod \big\{ \, \M_\Tp^{\CA_\Tp(\B,\M_\Tp)} \bigm| \M \in
\CM \, \big\}  \quad\text{given by}\quad c(b)(\M)(x) \coloneqq  x(b),
\]
for all $b \in B$ and $x\in \CA_\Tp(\B,\M_\Tp)$. Since $\B\in
\CA_\Tp$, the map $c$ is a continuous embedding. Let
\[
\pi \colon  \prod \big\{ \, \M_\Tp^{\CA(\B^\flat,\M)} \bigm| \M \in \CM \,
\big\} \to\prod \big\{ \, \M_\Tp^{\CA_\Tp(\B,\M_\Tp)} \bigm| \M \in
\CM \, \big\}
\]
be the obvious projection.
Clearly, $\pi\circ \esub {\B^\flat} = c$ and $\pi$ maps $\esub{\B^\flat} (B)$
bijectively to~$c(B)$. Since $\esub {\B^\flat}(B) \subseteq \pi^{-1}(\pi(\esub {\B^\flat}(B))) = \pi^{-1}(c(B))$, and since $\pi^{-1}(c(\B))$ is a closed substructure of $\prod \big\{ \, \M_\Tp^{\CA(\B^\flat,\M)} \bigm| \M \in \CM \,\big\}$, we have $n_\CA({\B^\flat})\subseteq \pi^{-1}(c(\B))$, whence $\pi(n_\CA({\B^\flat}))\subseteq c(\B)$.

Hence we can restrict both the domain and the codomain of $\pi$ and define
\[
\rho \coloneqq  \pi\rest{n_\CA({\B^\flat})}\colon  n_\CA({\B^\flat}) \to c(\B).
\]
 Finally, define $\gamma \coloneqq  c^{-1} \circ \rho$. Then we have
\[  
\gamma \circ \esub{\B^\flat} = c^{-1} \circ \rho \circ \esub{\B^\flat} =
c^{-1} \circ c = \id_B,
\]
completing the proof of (i).

Now consider (ii).   We first prove the uniqueness of the continuous homomorphism~$h$. Assume that  continuous homomorphisms $h, h' \colon  n_\CA(\A) \to \B$ satisfy $h\circ \esubA = h'\circ \esubA
= g$. Then the equaliser $\Y \coloneqq \eq(h, h')$ is a closed substructure of $\prod\big\{\, \Y_x \bigm| x\in X_\A\,\big\}$ containing $e_\A(A)$ and hence $\Y = n_\CA(\A)$; it follows at once that $h = h'$.

To prove the existence assertion, we apply (i) to find
$\gamma
\colon n_\CA(\B^\flat) \to \B$ with  $\gamma \circ \esub {\B^\flat} = \id_B$.
Note that $n_{\CA}(g) \colon n_{\CA}(\A) \to n_{\CA}(\B^\flat)$ is a continuous homomorphism with  $n_{\CA}(g) \circ \esubA = \esub {\B^\flat}\circ g$.
Now take $h = \gamma \circ n_{\CA}(g)$.
\end{proof}

It is very easy to check that the definition of $n_\CA(\A)$ requires only that
$\A$ be a structure of the appropriate
signature.
Proposition~\ref{prop:functor} and Theorem~\ref{lem:4} then show that $n_\CA$ provides a reflection functor from the category of all structures of the appropriate type into $\CA_\Tp$.

 The fact that  Theorem~\ref{lem:4}
 supplies
a reflection has the following important corollary.

\begin{corollary}\label{cor:leftadjoint}
For each {\upshape CT}-prevariety of structures $\CA$, the
functor $n_{\CA} \colon
\CA \to \CA_\Tp$ is left adjoint to the functor $^\flat \colon \CA_\Tp \to \CA$ forgetting the topology.
\end{corollary}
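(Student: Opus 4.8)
The plan is to deduce the adjunction directly from the reflection property established in Theorem~\ref{lem:4}, via the standard correspondence between adjoint functors and universal arrows.

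First, recall from Proposition~\ref{prop:functor}(i) that $n_\CA \colon \CA \to \CA_\Tp$ is a well-defined functor, and from Proposition~\ref{prop:functor}(ii) that $e \colon \id_\CA \to n_\CA^\flat$ is a natural transformation; this $e$ will serve as the unit of the adjunction. Now fix $\A \in \CA$. Theorem~\ref{lem:4}(ii) asserts precisely that $\esubA \colon \A \to n_\CA(\A)^\flat$ is a universal arrow from $\A$ to the forgetful functor $^\flat \colon \CA_\Tp \to \CA$: for every $\B \in \CA_\Tp$ and every $\CA$-morphism $g \colon \A \to \B^\flat$ there is a \emph{unique} $\CA_\Tp$-morphism (that is, a unique continuous homomorphism) $h \colon n_\CA(\A) \to \B$ with $h \circ \esubA = g$, where we identify $h$ with its underlying $\CA$-morphism.

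By the universal-arrow formulation of adjointness, the existence of such a universal arrow $\esubA$ for every object $\A \in \CA$ yields a left adjoint to $^\flat$, and the functor $n_\CA$ together with the action on morphisms described just before Proposition~\ref{prop:functor} is this left adjoint, with unit $e$. Concretely, the assignment $h \mapsto h \circ \esubA$ defines a bijection $\CA_\Tp(n_\CA(\A), \B) \to \CA(\A, \B^\flat)$: surjectivity is the existence clause and injectivity the uniqueness clause of Theorem~\ref{lem:4}(ii). Naturality of this bijection in $\A$ and in $\B$ follows from the functoriality of $n_\CA$ (Proposition~\ref{prop:functor}(i)) together with the naturality of $e$ (Proposition~\ref{prop:functor}(ii)), by the usual routine diagram chase. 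Hence $n_\CA \dashv {}^\flat$.

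There is no genuine obstacle here; the corollary is a formal consequence of the reflection theorem. The only point requiring a moment's care is that $\CA_\Tp$ is a \emph{non-full} subcategory of the ambient category of topological structures, so that in invoking the universal property one must use that the morphism $h$ delivered by Theorem~\ref{lem:4}(ii) is required to be, and indeed is, a continuous homomorphism into~$\B$, not merely a homomorphism of the underlying structures.
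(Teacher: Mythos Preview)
Your proposal is correct and takes essentially the same approach as the paper: the paper gives no explicit proof at all, simply recording the corollary as an immediate consequence of the reflection property in Theorem~\ref{lem:4}, which is exactly what you do in expanded form via the standard universal-arrow characterisation of adjoints.
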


Since the left adjoint to the forgetful functor is unique and depends only upon $\CA$ and the subcategory $\CA_\Tp$, we obtain the following important consequence for the natural-extension perspective we adopt in the remainder of the paper.

\begin{corollary}\     \label{nat-indep}
\begin{newlist}
 \item[\upshape(i)] 
Let $\CM$ and $\CM'$ be sets consisting of structures each of which has a fixed associated compact topology.
Define $\CA = \ISP(\CM)$ and assume that $\IScPnp(\CM_\Tp) = \IScPnp(\CM_\Tp')$.
Then $\CA = \ISP(\CM')$ and, for all $\A\in \CA$, the natural extensions
 of $\A$ relative to $\CM$ and relative to $\CM'$ agree.

 \item[\upshape(ii)] 
Let $\CA$ be an {\upshape IRF}-prevariety of structures.
Then, for each $\A\in \CA$, the natural extension $n_\CA(\A)$ of $\A$ is independent of the set $\CM$ of finite structures chosen to generate~$\CA$.
\end{newlist} 
\end{corollary}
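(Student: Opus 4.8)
The plan is to derive Corollary~\ref{nat-indep} directly from the uniqueness of left adjoints, which is precisely what Corollary~\ref{cor:leftadjoint} and the remark preceding the statement set up. First I would prove part~(i). Given $\CM$ and $\CM'$ with $\IScPnp(\CM_\Tp) = \IScPnp(\CM'_\Tp) =: \CB$, I first check the purely structural claim $\ISP(\CM) = \ISP(\CM')$. This follows by applying the forgetful functor $^\flat$: since every member of $\CM_\Tp$ lies in $\CB = \IScPnp(\CM'_\Tp)$, taking topology-free reducts shows $\M^\flat \in \ISP(\CM')$ for each $\M \in \CM$ (a topologically closed substructure of a product is in particular a substructure of that product, so $^\flat$ carries $\IScPnp(\CM'_\Tp)$ into $\ISP(\CM')$), hence $\ISP(\CM) \subseteq \ISP(\CM')$; the reverse inclusion is symmetric. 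Thus $\CA := \ISP(\CM) = \ISP(\CM')$ and, moreover, the associated topological prevariety $\CA_\Tp$ is the same object $\CB$ whether it is presented via $\CM_\Tp$ or via $\CM'_\Tp$.

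Next, the two constructions $\A \mapsto n_\CA(\A)$ relative to $\CM$ and relative to $\CM'$ both yield, by Theorem~\ref{lem:4}(ii) together with Corollary~\ref{cor:leftadjoint}, a left adjoint to one and the same forgetful functor $^\flat \colon \CB \to \CA$. Since a left adjoint is unique up to natural isomorphism, and in fact the reflection here comes equipped with the canonical insertion $\esubA$ (whose universal property pins down the reflection on the nose, not merely up to isomorphism, once we fix that the unit at $\A$ is $\esubA$), the two functors agree; in particular $n_\CA(\A)$ computed relative to $\CM$ and relative to $\CM'$ coincide for every $\A \in \CA$. One should be slightly careful here: strictly, uniqueness of adjoints gives a natural isomorphism rather than literal equality, so I would phrase the conclusion as: the two natural extensions of $\A$ are canonically isomorphic via the unique isomorphism commuting with the respective insertion maps of $\A$. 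This is the precise sense of ``agree'' intended.

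For part~(ii), let $\CA$ be an IRF-prevariety, say $\CA = \ISP(\CM) = \ISP(\CM')$ with $\CM$, $\CM'$ two sets of finite structures generating $\CA$. As noted in the paragraph before Definition~\ref{def:natext}, when the generators are finite the only compatible compact Hausdorff topology is the discrete one, so the choice of topologies is forced; thus the only freedom is in the choice of generating set. It therefore suffices, by part~(i), to show that $\ISP(\CM) = \ISP(\CM')$ forces $\IScPnp(\CM_\Tp) = \IScPnp(\CM'_\Tp)$, where all structures carry the discrete topology. This is a standard fact for finitely generated topological (pre)varieties: each finite $\M \in \CM$, being in $\ISP(\CM') $, embeds into a finite power $(\M'_1 \times \cdots \times \M'_k)$ of members of $\CM'$ as a substructure, and since $\M$ is finite and discrete that embedding is automatically a continuous embedding onto a (necessarily closed) substructure, giving $\M_\Tp \in \IScPnp(\CM'_\Tp)$; hence $\IScPnp(\CM_\Tp) \subseteq \IScPnp(\CM'_\Tp)$, and symmetrically. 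Then part~(i) applies and we conclude that $n_\CA(\A)$ is independent of the chosen finite generating set.

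The only real subtlety---hence the ``main obstacle''---is the bookkeeping in part~(i): one must verify that ``$\CA$'' and ``$\CA_\Tp$'' genuinely denote the same categories under the two presentations (so that the phrase ``left adjoint to the same forgetful functor'' is meaningful), and one must be honest about equality-versus-isomorphism. Both points are handled by the observations above: equality of $\ISP(\CM)$ with $\ISP(\CM')$ and of $\IScPnp(\CM_\Tp)$ with $\IScPnp(\CM'_\Tp)$ is genuine equality of classes, and the insertion maps $\esubA$ rigidify the reflection enough that the two natural extensions of a given $\A$ are related by a canonical isomorphism fixing $\A$. No part of this uses any property of the topologies beyond compactness and compatibility, which is why it works uniformly for all CT-prevarieties, with the IRF case being the special instance where the topologies are not even a matter of choice.
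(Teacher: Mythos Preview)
Your proof is correct and follows the same route as the paper's: for (i), strip the topology to get $\ISP(\CM)=\ISP(\CM')$ and then invoke uniqueness of the left adjoint from Corollary~\ref{cor:leftadjoint}; for (ii), reduce to (i) by showing that in the finite/discrete case $\ISP(\CM)=\ISP(\CM')$ forces $\IScPnp(\CM_\Tp)=\IScPnp(\CM'_\Tp)$. Your extra care about equality-versus-isomorphism is more scrupulous than the paper, and your detour through a \emph{finite} product in~(ii) is unnecessary (finiteness of~$\M$ already makes any embedding into a Hausdorff product continuous with closed image), but neither point affects correctness.
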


\begin{proof}
By Corollary~\ref{cor:leftadjoint}, we only need 
to
see that our assumptions guarantee that $\ISP(\CM) = \ISP(\CM')$, so that the categories are not changed when we pass from $\CM$ to $\CM'$.
We have $\IScPnp(\CM_\Tp) = \IScPnp(\CM_\Tp')$ by assumption. 
It follows that $\CM_\Tp \subseteq \IScPnp(\CM_\Tp')$, whence 
$\CM \subseteq \ISP(\CM')$ and so $\ISP(\CM) \subseteq \ISP(\CM')$. By symmetry we have the reverse inclusion and so $\ISP(\CM) = \ISP(\CM')$. This proves (i).

To prove (ii), it suffices to show that, if $\CM$ and $\CM'$ consist of finite structures, then $\ISP(\CM) = \ISP(\CM')$ implies that $\IScPnp(\CM_\Tp) = \IScPnp(\CM_\Tp')$. Assume that $\ISP(\CM) = \ISP(\CM')$. Since the topologies involved are discrete, it follows easily from this that $\CM_\Tp \subseteq \IScPnp(\CM_\Tp')$ and $\CM_\Tp' \subseteq \IScPnp(\CM_\Tp)$, whence $\IScPnp(\CM_\Tp) = \IScPnp(\CM_\Tp')$.
\end{proof}


\section{The Bohr compactification versus the natural extension:\\ the role of standardness}\label{sec:BviaN}

Our goal in this section is to compare Bohr compactifications to the natural extension in situations where the latter is defined.

Consider until further notice the situation in which we have 
a CT-prevariety $\CA = \ISP(\CM)$ of structures and its associated topological prevariety $\CA_\Tp= \IScPnp(\CM_\Tp)$. Note that we always have $\CA_\Tp\subseteq \CA^{\mathrm{ct}}$, and if the topologies on the members of $\CM$ are Boolean, in particular if $\CA$ is an IRF-prevariety, then we have $\CA_\Tp\subseteq \CA^{\mathrm{Bt}}$. 
Observe that it is the category $\CA_\Tp$ that appears in Theorem~\ref{lem:4}, rather than either of the potentially larger
categories $\CA^{\mathrm{Bt}}$ and $\CA^{\mathrm{ct}}$.

The Bohr compactification (in both zero-dimensional and compact Hausdorff
versions) and the natural extension of a structure are characterised by universal mapping properties;
Definition~\ref{def:Bohr} and Theorem~\ref{lem:4}.
Thus we have reflection functors into three possibly different categories
(see Figure~\ref{fig:new}):
\begin{newitemize}
\item   
the natural extension functor, providing a reflection into $\CA_\Tp$;
\item 
 the zero-dimensional Bohr compactification functor $b_0$, giving a reflection into the category
$\CA^{\mathrm{Bt}}$;
\item  
the Bohr compactification functor $b$, giving a reflection into the category $\CA^{\mathrm{ct}}$.
\end{newitemize}
In each case the functor is uniquely determined by its characteristic universal mapping property.
We recall from Section~\ref{sec:intro} that 
\emph{strong coincidence} occurs when two of the functors $n_{\CA}$, $b_0$ and~$b$ on~$\CA$ coincide because their codomains are the same, and that
\emph{weak coincidence} arises when the codomain
categories are different but the image of the functor into the larger 
of the categories lies in the smaller one.
The following proposition is immediate.  

\begin{proposition} \label{prop:coincide}
Let $\CA=\ISP(\CM)$
be a {\upshape CT}-prevariety of structures
and define $\CA_\Tp\coloneqq \IScPnp(\CM_\Tp)$.
Then the following statements hold.
\begin{newlist}
\item[{\upshape (i)}]  If $\CA_\Tp = \CA^{\mathrm{ct}}$, then
$b(\A) = n_{\CA}(\A)$ for each $\A \in \CA$.
\item[{\upshape (ii)}] If 
$\CA_\Tp = \CA^{\mathrm{Bt}}$, then
$b_0(\A) = n_{\CA}(\A)$ for each $\A \in \CA$.

\item[{\upshape (iii)}] Let $\A \in\CA$.  Then
\begin{newlist}
\item[{\upshape (a)}]
 $b_0(\A) = n_{\CA}(\A)$ if and only if $b_0(\A) \in \CA_\Tp$;
\item[{\upshape (b)}]   $b(\A) = b_0(\A) $ if and only if
$b(\A)
\in \CA^{\mathrm{Bt}}$.
\end{newlist}
\end{newlist}
\end{proposition}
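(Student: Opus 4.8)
The plan is to read off each claim directly from the universal mapping properties established in Definition~\ref{def:Bohr} and Theorem~\ref{lem:4}, using the uniqueness of left adjoints. The key observation that makes everything work is the chain of inclusions $\CA_\Tp \subseteq \CA^{\mathrm{ct}}$ (always) and $\CA_\Tp \subseteq \CA^{\mathrm{Bt}} \subseteq \CA^{\mathrm{ct}}$ (when the generating topologies are Boolean), together with the fact that all three functors $n_\CA$, $b_0$, $b$ are reflections, hence left adjoints to the respective forgetful functors into $\CA$.

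For (i): if $\CA_\Tp = \CA^{\mathrm{ct}}$, then $n_\CA \colon \CA \to \CA_\Tp$ and $b \colon \CA \to \CA^{\mathrm{ct}}$ are both left adjoint to the same forgetful functor $^\flat \colon \CA_\Tp = \CA^{\mathrm{ct}} \to \CA$ (Corollary~\ref{cor:leftadjoint} and the remark following Definition~\ref{def:Bohr}). Since left adjoints are unique up to natural isomorphism, $b(\A) = n_\CA(\A)$ for each $\A \in \CA$. Part (ii) is identical with $\CA^{\mathrm{ct}}$ replaced by $\CA^{\mathrm{Bt}}$ and $b$ replaced by $b_0$.

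For (iii)(a): the forward direction is immediate since $n_\CA(\A) \in \CA_\Tp$ by construction. For the converse, suppose $b_0(\A) \in \CA_\Tp$. I would verify that $b_0(\A)$, together with the embedding $\iota_\A$, satisfies the universal mapping property of Theorem~\ref{lem:4}(ii) characterising $n_\CA(\A)$: given $\B \in \CA_\Tp$ and a homomorphism $g \colon \A \to \B^\flat$, we have $\B \in \CA_\Tp \subseteq \CA^{\mathrm{Bt}}$, so the universal property of $b_0(\A)$ supplies a unique $\CA^{\mathrm{Bt}}$-morphism $h \colon b_0(\A) \to \B$ with $h \circ \iota_\A = g$; uniqueness among all continuous homomorphisms follows because $\iota_\A(A)$ generates a dense (or generating closed) substructure. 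Since $b_0(\A)$ and $n_\CA(\A)$ both solve the same universal problem, they coincide. Part (iii)(b) is the same argument one level up: if $b(\A) \in \CA^{\mathrm{Bt}}$, then for any $\B \in \CA^{\mathrm{Bt}} \subseteq \CA^{\mathrm{ct}}$ the universal property of $b(\A)$ gives the factorisation required of $b_0(\A)$, so $b(\A) = b_0(\A)$; this is already noted in the paragraph following Definition~\ref{def:Bohr}.

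There is essentially no obstacle here — the proposition is labelled "immediate" for good reason. The only point requiring a word of care is the uniqueness clause in the universal properties: one must check that when restricting attention to the smaller codomain category, the morphism $h$ produced is still unique among continuous homomorphisms (not merely among morphisms of the smaller category), which is exactly the standard equaliser-is-a-closed-substructure argument already invoked in the proof of Theorem~\ref{lem:4}(ii) and in the discussion after Definition~\ref{def:Bohr}. Hence the proof amounts to citing Corollary~\ref{cor:leftadjoint}, Theorem~\ref{lem:4}, Definition~\ref{def:Bohr}, and the uniqueness of adjoints.
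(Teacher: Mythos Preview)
Your proposal is correct and is precisely the intended unpacking: the paper gives no proof beyond declaring the proposition ``immediate'', and your argument---uniqueness of left adjoints for (i)--(ii), and verifying the relevant universal property when the compactification lands in the smaller codomain for (iii)---is exactly the obvious justification. The only remark is that in (iii)(a) you should note (as you implicitly do) that $\CA_\Tp \subseteq \CA^{\mathrm{Bt}}$ is being used, which requires the generating topologies to be Boolean; this is the setting in which $b_0$ is compared with $n_\CA$ throughout.
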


Suppose that $\CA$ is such that we have an explicit description
of each $n_{\CA}(\A)$.  
Then strong or weak coincidence of $b_0$  with $n_{\CA}$
allows us to describe ~$b_0$, and likewise with~$b$ in place of~$b_0$.
 To exploit the above observations we need to know more  about~$\CA_\Tp$.
We are fortunate that a wealth of information is already available, or is easy to obtain, in the special case that most interests us: that in 
which $\CM$ contains a single structure~$\M$.  

In the case that $\M$ is finite, the assumption $\CA_\Tp = \CA^{\mathrm{Bt}}$ in Proposition~\ref{prop:coincide}(ii) is exactly the condition that the topological prevariety $\CA_\Tp$ is \emph{standard},
 in the sense that
$\CA_\Tp $ consists precisely of the structures which are Boolean-topological models of the quasi-equations defining $\CA$.
We then have the following theorem concerning  strong coincidence of 
$b_0$ and $n_{\CA}$.

\begin{theorem} \label{thm:NatIsBohr}
Let $\M$ be a finite structure, define $\CA\coloneqq  \ISP(\M)$
and assume that the associated
topological prevariety $\CA_\Tp\coloneqq  \IScPnp(\M_\Tp)$
is standard. Then, for every $\A \in \CA$,  the
zero-dimensional Bohr compactification $b_0(\A)$ of the structure $\A$ coincides with its natural extension $n_{\CA}(\A)$.
\end{theorem}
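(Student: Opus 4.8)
The plan is to reduce the statement to Proposition~\ref{prop:coincide}(ii) by showing that, under the standardness hypothesis, the always-valid inclusion $\CA_\Tp\subseteq\CA^{\mathrm{Bt}}$ is in fact an equality. Once $\CA_\Tp=\CA^{\mathrm{Bt}}$ is in hand, Proposition~\ref{prop:coincide}(ii) yields $b_0(\A)=n_{\CA}(\A)$ for every $\A\in\CA$ immediately; no separate verification of the universal mapping property of $b_0$ is required, because the existence and the characterisation of $b_0(\A)$ are then subsumed by Theorem~\ref{lem:4}(ii) applied to the subcategory $\CA^{\mathrm{Bt}}=\CA_\Tp$.

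The inclusion $\CA_\Tp\subseteq\CA^{\mathrm{Bt}}$ needs no appeal to standardness. Since $\M$ is finite, the compatible compact topology on $\M$ is discrete, hence Boolean; a product of Boolean topological structures is Boolean, and a topologically closed substructure of a Boolean topological structure is again Boolean. Thus every object of $\CA_\Tp=\IScPnp(\M_\Tp)$ is a Boolean topological structure whose topology-free reduct lies, by construction, in $\ISP(\M)=\CA$; that is, it belongs to $\CA^{\mathrm{Bt}}$. (The same applies to any of the four variants of the topological prevariety listed in Remark~\ref{rem:zero-one}.)

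For the reverse inclusion we unpack the definition of standardness. As $\CA=\ISP(\M)$ is a prevariety generated by a finite structure, it is a quasivariety: a structure of the given signature lies in $\CA$ exactly when it is a model of the set of quasi-equations valid in $\M$. Since a quasi-equation holds in a topological structure precisely when it holds in its topology-free reduct, $\CA^{\mathrm{Bt}}$ is exactly the class of Boolean topological structures that are models of the quasi-equations defining $\CA$. The assumption that $\CA_\Tp$ is standard says, by definition, that this class coincides with $\CA_\Tp$; hence $\CA^{\mathrm{Bt}}\subseteq\CA_\Tp$, and therefore $\CA_\Tp=\CA^{\mathrm{Bt}}$. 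Proposition~\ref{prop:coincide}(ii) then completes the proof. The one point that genuinely requires care—and the main obstacle—is this identification of $\CA^{\mathrm{Bt}}$ with the Boolean topological models of the quasi-equations defining $\CA$: it rests on the fact that a prevariety generated by a finite structure is axiomatisable by quasi-equations (classical for algebras, and valid in the present generality), together with the elementary observation that adjoining a topology does not affect the validity of a quasi-equation.
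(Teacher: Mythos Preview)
Your proof is correct and takes essentially the same approach as the paper: the paper observes (in the paragraph immediately preceding the theorem) that for finite~$\M$ the standardness of $\CA_\Tp$ is precisely the condition $\CA_\Tp=\CA^{\mathrm{Bt}}$, whence the theorem follows at once from Proposition~\ref{prop:coincide}(ii). You supply more detail than the paper does---explicitly verifying the easy inclusion $\CA_\Tp\subseteq\CA^{\mathrm{Bt}}$ and unpacking the identification of $\CA^{\mathrm{Bt}}$ with the Boolean-topological models of the quasi-equations defining~$\CA$---but the argument is the same.
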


Our linkage of standardness to the coincidence of structures characterised by universal mapping properties is new. However the notion of
standardness has received a lot of attention in its own right, 
 principally in the case that
$\M$ is an algebra, but to a limited extent when~$\M$ is a structure
(we consider the latter case later).
 The systematic study of standardness
of a topological prevariety
$\ISOcP( \M_\Tp)$ was initiated
in \cite{CDHPT,CDFJ}.
In these papers $\M$ is taken to be a finite structure (not necessarily  an algebra and not necessarily total).   
While the theory of standardness was developed for topological prevarieties of the form $\ISOcP( \M_\Tp)$, the results apply with almost no change to all four settings described in Remark~\ref{rem:zero-one}; in particular they apply to the class $\IScPnp(\M_\Tp)$ of interest here.

There are interesting and substantial results available  `off-the-peg'
when $\M$ 
is a finite algebra.  Assume this,   and assume moreover that
$\ISP(\M) = \HSP(\M)$ so that $\ISP(\M)$ is a variety. 
The principal general
result of~\cite{CDFJ},
the FDSC-HSP Theorem, reveals that a rather natural algebraic condition ensures standardness of $\IScPnp(\M_\Tp)$.
This property---having finitely determined syntactic congruences---holds in particular if $\HSP(\M)$ has the more familiar property
of having  definable principal congruences (see \cite[Section~2]{CDFJ} for the definitions and discussion, and~\cite[Theorem~2.13]{CDJP} for an extension of the FDSC-HSP Theorem to total structures). In some cases the FDSC condition will hold for an entire variety of algebras, and hence for its finitely generated subvarieties; in others, in particular lattices,
restriction to finite generation is critical if FDSC is to hold. 
The FDSC-HSP Theorem implies that the topological prevariety $\IScPnp(\M_\Tp)$
is standard in each of the following cases:  $\M$ is a finite Boolean algebra, distributive lattice or implication algebra, or $\M$ is a finite group, semigroup, ring, lattice, Ockham algebra, or unary algebra such that $\HSP(\M) = \ISP(\M)$.
This catalogue  
 is not exhaustive. 
For additional  examples, and verifications of the claims above, 
see \cite[Section~6]{CDFJ} 
 and also~\cite{DT}.
We should however warn that standardness is a subtle property in general,
and can fail:
there exist finite algebras $\M$ for which $\IScPnp(\M_\Tp)$ is not standard.
An example is given in \cite[Example~4.3]{CDJP} in which $\M$ is a 10-element modular lattice. Further insight into when and why standardness occurs is provided by \cite{CDJP,CDMM} and, for structures, \cite[Section~3]{D06}.
On the positive side,  then, Theorem~\ref{thm:NatIsBohr} is rather widely applicable.  Moreover,  in many cases when it is, we shall confirm below that
the natural extension has an appealingly simple description, so that the zero-dimensional  Bohr compactification does too.

We draw attention to a well-known instance of non-standardness in the context of structures.

\begin{example}\label{ex:Stralka} 
Consider the category of ordered sets, $\CP = \ISOP(\twoT)$,
and the associated topological prevariety
$\CP_\Tp= \ISOcPnp(\twoT_\Tp)$ of Priestley spaces,
where ${\twoT = \langle \{0,1 \}; \leqslant\rangle}$ is the two-element chain.
Stralka \cite{Str80} exhibited two examples of Boolean spaces with a closed order relation that fail to be Priestley spaces, whence $\CP_\Tp$
is non-standard.
(For further analysis of this phenomenon, see \cite{BMM02}.)

We shall show in
Proposition~\ref{NachisNat} that $b_0$ and $\n_{\CP}$ do coincide 
(in fact  $b$ coincides with $n_{\CP}$ too).   
Here we have 
an instance of coincidence occurring in the weak sense but not in the  strong sense.
We deduce that
none of the  examples witnessing non-standardness of $\CP_\Tp$ belongs to
the image of the class~$\CP$ under~$b_0$ (or equivalently~$b$).
\end{example}

It is not always the case that the Bohr compactification and its
zero-dimensional variant coincide in the weak sense.
 This was established  for meet semilattices
by Hart and Kunen
\cite[Section~3.4]{HK99},
in particular
\cite[Corollary 3.4.13]{HK99},
drawing
on pioneering work by
Lawson (see
\cite{HK99} and \cite[Chapter~VI]{comp2} for references).
We present a proof which takes full advantage of the theory of continuous lattices, as presented in a mature form in \cite{comp2}, as well as results we have to hand.
Hart and Kunen work with non-unital meet semilattices.
For convenience we work with the variety $\CS$ of
unital meet semilattices.
When, as in \cite{HK99}, the unit~$1$ is not included in the signature,
one may pass to semilattices which do have~$1$; see \cite[p.~452]{comp2}.

We shall draw on
the Fundamental Theorem on Compact Totally Disconnected Semilattices
\cite[Theorem~VI-3.13]{comp2}.
In outline this asserts that  the objects of $\CS^\text{Bt}$ are those compact
topological unital meet semilattices that have small semilattices, meaning
that there exists a neighbourhood basis of subsemilattices at each point.
Moreover there is an isomorphism of categories  between
$\CS^{\text{Bt}}$ and
the category  of algebraic lattices equipped with the Lawson topology and with
the Lawson-continuous maps preserving~$1$ as morphisms;
these morphisms can alternatively be described
as the maps which preserve arbitrary
meets and directed joins.

\begin{theorem}
\label{thm:badsemilat}
Let $\CS$ be the class of unital meet semilattices. 
Then there exists $\A$ in~$\CS$ such that $b(\A) \ne b_0(\A)$.
\end{theorem}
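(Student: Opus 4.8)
The plan is to exhibit a single unital meet semilattice $\A$ for which $b(\A)$ fails to be Boolean-topological; by Proposition~\ref{prop:coincide}(iii)(b) this is precisely the assertion $b(\A)\ne b_0(\A)$. I shall use freely that $\CS$ is standard, so that $b_0(\A)=n_{\CS}(\A)$ for every $\A\in\CS$ (Theorem~\ref{thm:NatIsBohr}), and that, by the Fundamental Theorem on Compact Totally Disconnected Semilattices \cite{comp2}, $\CS^{\mathrm{Bt}}$ is, up to isomorphism of categories, the category of algebraic lattices under the Lawson topology. I also interpose the intermediate class: writing $\I=\langle[0,1];\min,1\rangle$ with its Euclidean topology, the continuous-lattice theory of \cite{comp2} identifies $\IScPnp(\I_\Tp)$ with the compact topological unital meet semilattices that have small semilattices, equivalently (again via the Lawson topology) with the continuous lattices. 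One has $\CS^{\mathrm{Bt}}\subseteq\IScPnp(\I_\Tp)\subseteq\CS^{\mathrm{ct}}$, the first inclusion holding because a Boolean compact semilattice embeds as a closed subsemilattice into a power of $\{0,1\}\subseteq[0,1]$, and both inclusions being proper --- the first because $\I$ itself is connected, hence not Boolean, and the second by the existence of compact semilattices lacking small semilattices.

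\textbf{The candidate object.} Since $b$ and $b_0$ are left adjoints to the respective forgetful functors, and the forgetful functor $\CS\to\mathbf{Set}$ has the free-semilattice functor as left adjoint, $b$ and $b_0$ carry free objects to free objects. Following Hart and Kunen --- who work with the non-unital analogue, the passage between the two settings being provided by \cite[p.~452]{comp2} --- I take $\A$ to be the free unital meet semilattice on a countably infinite set of generators. Then $b(\A)$ is the free object on those generators in $\CS^{\mathrm{ct}}$, while $b_0(\A)=n_{\CS}(\A)$ is the free object in $\CS^{\mathrm{Bt}}$, and the latter is described concretely, via Definition~\ref{def:natext}, as the closed subsemilattice of $\twoT^{\,\CS(\A,\twoT)}$ generated by the evaluation maps $\esub\A(a)$. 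Now suppose, for a contradiction, that $b(\A)=b_0(\A)$. Then $b(\A)\in\CS^{\mathrm{Bt}}$, so $b(\A)$ lies in each of $\CS^{\mathrm{ct}}$, $\IScPnp(\I_\Tp)$ and $\CS^{\mathrm{Bt}}$ and is the free object on the given generators in all three of them; in particular $b(\A)$ is a continuous lattice, indeed a closed subsemilattice of some power $\I^{\kappa}$.

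\textbf{The crux.} It remains to contradict this last conclusion: the free compact topological unital meet semilattice on a countably infinite set of generators is \emph{not} a closed subsemilattice of any power of $\I$, equivalently does not have small semilattices. This is the main obstacle, and it is where the structure theory of \cite{comp2} does the real work. The idea is that an embedding of $b(\A)$ into $\I^{\kappa}$, followed by the coordinate projections and combined with freeness, would realise \emph{every} sequence $(d_n)\in[0,1]^{\omega}$ as the sequence of images of the generators under a continuous homomorphism $b(\A)\to\I$; but a continuous meet-homomorphism between compact topological semilattices automatically preserves arbitrary meets and directed joins (finite-meet nets converge to their infimum and directed nets to their supremum in a compact topological semilattice), and confronting these constraints with the explicit form of $n_{\CS}(\A)$ --- in particular with the behaviour forced at the distinguished constant $1$ --- shows that suitable sequences $(d_n)$ cannot be realised. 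This is the unital counterpart of \cite[Corollary~3.4.13]{HK99}. It follows that $b(\A)\notin\CS^{\mathrm{Bt}}$, so $b(\A)\ne b_0(\A)$, as required; apart from this computation the argument is routine bookkeeping with adjoint functors, Proposition~\ref{prop:coincide}, Theorem~\ref{thm:NatIsBohr} and the Fundamental Theorem.
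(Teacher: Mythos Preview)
Your overall strategy---take a specific $\A$, assume $b(\A)=b_0(\A)$, and derive a contradiction---is sound, but the route you choose diverges from the paper's and your ``crux'' paragraph does not actually close. You observe correctly that freeness of $b(\A)$ in $\CS^{\mathrm{ct}}$ forces \emph{every} sequence $(d_n)\in[0,1]^\omega$ to be realised by a continuous homomorphism $b(\A)\to\I$ (this needs only $\I\in\CS^{\mathrm{ct}}$ and the universal property of $b$, not any embedding into $\I^\kappa$). You then claim that, using the explicit form of $n_{\CS}(\A)$ together with preservation of arbitrary meets and directed joins, ``suitable sequences $(d_n)$ cannot be realised''. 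But no such obstruction is produced: the generators $\iota_\A(x_n)$ form an antichain in $n_{\CS}(\A)$, so there is no directed-join relation among them to exploit, and preservation of arbitrary meets merely fixes the value of $h$ at $\bigwedge_n\iota_\A(x_n)$ without constraining the $d_n$. The genuine fact you need---that the free object on $\omega$ generators in $\CS^{\mathrm{ct}}$ fails to have small semilattices---is a substantial theorem of Lawson, and your sketch does not recover it; at best you have reduced to a black-box citation of \cite[Corollary~3.4.13]{HK99}.

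The paper's argument avoids all of this by a different and much shorter route. Rather than a free object, take $\A\coloneqq\B^\flat$ where $\B\in\CS^{\mathrm{ct}}$ is any compact unital meet semilattice without small semilattices (such exist by \cite[VI-4.5]{comp2}). If $b(\A)=b_0(\A)$, then by standardness $b(\A)=n_{\CS}(\A)$. Now invoke Theorem~\ref{lem:4}(i): there is a continuous homomorphism $\gamma\colon n_{\CS}(\B^\flat)\to\B$ with $\gamma\circ e_{\B^\flat}=\id_B$, so $\B$ is a continuous homomorphic image of a Boolean-topological semilattice. Since the small-semilattices property passes to such images \cite[VI-3.5]{comp2}, $\B$ has small semilattices---a contradiction. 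This uses only the existence of the bad $\B$ and the retraction already established in the paper, with no analysis of free objects required.
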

\begin{proof}
There exists a compact topological unital meet semilattice $\B$
which fails to have small semilattices. See \cite[VI-4.5]{comp2}
for the statement, and the definitions and
lemmas which precede it for the construction. Let $\A = \B^\flat$ so that
$\A \in \CS$.
Suppose
 for a contradiction that $b(\A) = b_0(\A)$.
Then $b(\A)$ is a compact zero-dimensional  
unital meet semilattice
and hence is an algebraic lattice.   Moreover, $b(\A) = b_0(\A)
 =n_{\CS}(\A)$, by the standardness
of~$\CS_\Tp$.

By Theorem~\ref{lem:4}(i) there exists  a continuous homomorphism $\gamma \colon  n_{\CS}(\B^\flat)\to
\B$ such that $\gamma \circ e_{\A} = \id_B$. 
 This implies that~$\B$
is the  image under a continuous homomorphism of a compact (totally disconnected)
topological unital meet semilattice.  Since the domain of this map has small
semilattices, the same is true of the image, by \cite[VI-3.5]{comp2}
(or by the  specialisation of this result to the totally disconnected case).  It follows by the
Fundamental
 Theorem \cite[Theorem VI-3.13]{comp2} that $\B \in \CS^{\mathrm{Bt}}$.
 This provides the required contradiction.
\end{proof}

We now turn to the 
case in which $\M$ is an infinite structure and $\M_\Tp$ is a compact topological structure.

Standardness has been studied almost exclusively in the case where $\CA$ is a universal Horn class generated by its finite members.
Nevertheless, by analogy,
in the case that $\M_\Tp$ is an infinite compact topological structure,
it is natural to
define $\CA\coloneqq \ISP(\M)$ and
say that the topological prevariety $\CA_\Tp \coloneqq  \IScPnp(\M_\Tp)$ is \emph{compact-standard} if $\CA_\Tp = \CA^{\mathrm{ct}}$ and is \emph{Boolean-standard} if $\CA_\Tp = \CA^{\mathrm{Bt}}$. Since $\CA_\Tp \subseteq \CA^{\mathrm{ct}}$ always holds, it follows that $\CA_\Tp$ is compact-standard precisely when every compact
topological structure whose non-topological reduct is in $\CA$ can be embedded as a topological structure into a power of $\M_\Tp$. 
For example, the (highly non-trivial) fact that every compact
topological abelian group embeds into a power of the circle group $\mathbb T_\Tp$ tells us that the topological prevariety $\IScPnp(\mathbb T_\Tp)$ is the class of compact topological abelian groups (see~\cite[C, p.~241]{Pont})
and so is compact-standard. 
Similarly, if $\Tp$ is a Boolean topology, then $\CA_\Tp$ is Boolean-standard precisely when every Boolean topological structure whose non-topological reduct is in $\CA$ can be embedded as a topological structure into a power of $\M_\Tp$. We now give an example of a Boolean-standard prevariety with infinite generator.

\begin{example}[Semilattices with automorphism]\label{jezek}
We consider
the variety of semilattices with automorphism, as introduced by Je\v{z}ek
\cite{jez}; see \cite[Section~8]{DJPT} for further details. We let $\mathbf P$ have universe $2^{\mathbb Z}$, the set of all functions from the integers into the set $2 \coloneqq  \{0,1\}$. The meet operation  is defined  pointwise, relative to the two-element semilattice $\langle 2;\land \rangle$.  We let  $s \colon
\mathbb Z \to \mathbb Z $ be the successor function given by $s(i) \coloneqq  i+1$, for all
$i \in \mathbb Z$ and let~$\boldsymbol 0$ be the function on $\mathbb Z$ with constant value~$0$. We add the shift operations~$f$ and~$f^{-1}$ to the signature,
where $f(a) = a \circ s$ and $f^{-1}(a) = a \circ s^{-1}$
for $a\in 2^{\mathbb Z}$.
Thus $\mathbf P  = \langle \{0,1\}^{\mathbb Z}; \land, f,f^{-1}, \boldsymbol 0\rangle$.
Define $\CA  \coloneqq \ISP(\mathbf P)$ and let $\mathbf P_\Tp$ denote~$\mathbf P$ equipped with the product topology obtained from the discrete topology on $\{0,1\}$. We claim that $\CA_\Tp \coloneqq  \IScPnp(\mathbf P_\Tp)$ is Boolean-standard.

Consider a Boolean-topological algebra $\A$ having algebraic reduct
in~$\CA$.  We must show that~$\A$ embeds into a power of~$\mathbf P_\Tp$ via a continuous $\CA$-homomorphism.  The algebra
$\A^{\!\top}$, which denotes~$\A$  with a top element $\top$ adjoined as a topologically  isolated point, has a semilattice reduct which is a unital
meet semilattice.  The Fundamental Theorem for Compact Totally Disconnected Semilattices \cite[Theorem VI-3.13]{comp2} tells us that $\A^{\!\top}$ is an algebraic lattice and that its topology is the Lawson topology.  Extend~$f$ to $\A^{\!\top}$ by letting $f(\top) = \top$.  Then~$f$, so extended, is 
a semilattice homomorphism of $\A^{\!\top}$ 
preserving~$\top$. We define a family of maps
from $\A^{\!\top}$ into~$\mathbf P$ indexed by the compact elements~$k$
(excluding~$\top$) of $\A^{\!\top}$ as follows:
\[
h_k (x)(i) = 1 \iff f^i(k) \leq x.
\]
Then, for each fixed~$j \in \mathbb Z$, the set $\{ \, x \mid  h_k(x)(j) =1 \,\}$ is equal to $ {\uparrow} f^j(k)$. Now note that because the extended maps $f$, $f^{-1}$ and their iterates are semilattice automorphisms, and hence order-isomorphisms, of $\A^{\!\top}$, the element $f^j(k)$ is compact whenever~$k$ is.  It follows from properties of the Lawson topology on an algebraic lattice that each set $\{ \, x \mid  h_k(x)(j) =1 \,\}$
is clopen in~$\A^{\!\top}$ (see for example \cite[Exercise III-1.4]{comp2}).
This proves that the inverse image under $h_k$ of each member of a clopen
subbasis in $\mathbf P$ is clopen in $\A^{\!\top}$.  Since $\top$ is an isolated point with $h_k^{-1}(\top) = \{ \top\}$, the restriction
$h_k{\restriction}_\A$ is a continuous map of $\A$ into~$\mathbf P$. As shown in \cite[Proposition~1.1]{jez}, each $h_k$ is an $\CA$-homomorphism.

To show that $\A$ embeds into a power of $\mathbf P_\Tp$, it suffices
to show that the maps $h_k$ separate the points of~$\A$. Take $a \nleqslant b$ in~$\A$.  Since $\A^{\!\top}$ is an algebraic lattice, there exists a compact element~$k\ne \top $ of~$\A^{\!\top}$ with $k \leq a$ and $k \nleqslant b$.   Then $h_k$ separates~$a$ and~$b$.

It follows that $\IScPnp(\mathbf P_\Tp)$ is Boolean-standard, as claimed.
\end{example}


We now present the infinite-generator version of Theorem~\ref{thm:NatIsBohr}.

\begin{theorem} \label{thm:NatIsBohrInf}
Let $\M$ be an infinite structure and let $\Tp$ be a compact
topology on $M$ that is compatible with $\M$.
Define $\CA=\ISP(\M)$ and  $\CA_\Tp \coloneqq  \IScPnp(\M_\Tp)$.
\begin{newlist}
\item[{\upshape (i)}]
If the topological prevariety
$\CA_\Tp$ is compact-standard, then, for every $\A \in \CA$,  the
Bohr compactification $b(\A)$ of the structure $\A$ coincides with its natural extension $n_{\CA}(\A)$.

\item[{\upshape (ii)}] If the topology $\Tp$ is Boolean, and the topological prevariety $\CA_\Tp$ is Boolean-standard, then, for every $\A \in \CA$,  the zero-dimensional Bohr compactification $b_0(\A)$ of the structure $\A$ coincides with its natural extension $n_{\CA}(\A)$.
\end{newlist}
\end{theorem}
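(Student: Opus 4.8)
The plan is to deduce both statements from the abstract characterisations already in place, using Proposition~\ref{prop:coincide} together with Theorem~\ref{lem:4}. The crux is the same in both parts: under the standardness hypothesis the codomain $\CA_\Tp$ of the natural extension functor coincides with the target category of the relevant Bohr functor, and then the uniqueness built into the universal mapping properties forces $n_\CA$ and $b$ (resp.\ $b_0$) to agree.

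For part~(i), assume $\CA_\Tp$ is compact-standard, that is, $\CA_\Tp = \CA^{\mathrm{ct}}$. By Theorem~\ref{lem:4}(ii) the functor $n_\CA \colon \CA \to \CA_\Tp$ is a reflection of $\CA$ into $\CA_\Tp$; since $\CA_\Tp = \CA^{\mathrm{ct}}$, this says precisely that for every $\A \in \CA$, every $\B \in \CA^{\mathrm{ct}}$ and every $\CA$-morphism $g \colon \A \to \B^\flat$ there is a unique continuous homomorphism $h \colon n_\CA(\A) \to \B$ with $h \circ \esubA = g$. This is word-for-word the universal mapping property defining $b(\A)$ in Definition~\ref{def:Bohr}, once one checks the density/closed-generation clause: since we are dealing with total structures (no partial operations), we must verify that $\esubA(\A)$ is topologically dense in $n_\CA(\A)$, which is immediate from Definition~\ref{def:natext}, as $n_\CA(\A)$ is the topological closure of $\esubA(\A)$. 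Hence $n_\CA(\A)$ satisfies the property characterising $b(\A)$, and by the uniqueness up to $\CA^{\mathrm{ct}}$-isomorphism asserted in Definition~\ref{def:Bohr} we conclude $b(\A) = n_\CA(\A)$ for every $\A \in \CA$. (Equivalently, one may simply invoke Proposition~\ref{prop:coincide}(i), whose hypothesis $\CA_\Tp = \CA^{\mathrm{ct}}$ is exactly compact-standardness.)

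Part~(ii) is the mirror image. If $\Tp$ is Boolean then every member of $\CA_\Tp$ is a Boolean-topological structure, so $\CA_\Tp \subseteq \CA^{\mathrm{Bt}}$; Boolean-standardness gives the reverse inclusion, hence $\CA_\Tp = \CA^{\mathrm{Bt}}$, and Proposition~\ref{prop:coincide}(ii) yields $b_0(\A) = n_\CA(\A)$ for all $\A \in \CA$. Alternatively, argue directly as in part~(i): Theorem~\ref{lem:4}(ii) now says $n_\CA$ is a reflection into $\CA^{\mathrm{Bt}}$, which is the defining universal property of $b_0$, so the two functors coincide by uniqueness. I do not expect a serious obstacle here: the only thing to be careful about is matching the generation clause in Definition~\ref{def:Bohr} to the definition of $n_\CA(\A)$, and since $\M$ (hence the signature) carries no partial operations in the situations of interest, the relevant clause is just topological density of $\esubA(\A)$, which holds by construction. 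The substance of the theorem lies entirely in the standardness hypotheses, which are assumed; the proof itself is a short translation between two universal mapping properties.
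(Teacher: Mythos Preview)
Your proposal is correct and matches the paper's approach exactly: the paper gives no explicit proof of this theorem, treating it as an immediate consequence of Proposition~\ref{prop:coincide} once the definitions of compact-standard and Boolean-standard are unfolded (just as Theorem~\ref{thm:NatIsBohr} is immediate from Proposition~\ref{prop:coincide}(ii)). Your additional unpacking via Theorem~\ref{lem:4} and the density clause is more detail than the paper provides, but it is precisely the content underlying Proposition~\ref{prop:coincide}.
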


This is an opportune point at which to make some background comments
on topological prevarieties and their generating sets and to relate our exposition
to an aspect of that of Hart and Kunen~\cite[Section~2.6]{HK99}.
Our presentation of the natural extension construction
 works with CT-prevarieties 
$\CA=\ISP(\CM)$, where 
usually $\CM$ has a single element though this is not essential.
The `home' of $n_{\CA}(\A)$, for each~$\A \in \CA$, is then the topological prevariety $\IScPnp(\CM_\Tp)$.   Our first comment is that $n_{\CA}(\A)$ is uniquely determined 
by~$\CM_\Tp$.
On the other hand, the universal
property characterising a Bohr compactification
for a general class of algebras,
$\CC$ say, involves \emph{all} members of $\CC^{\mathrm{Bt}}$ or
of $\CC^{\mathrm{ct}}$, as appropriate.  This---and knowledge of Pontryagin
duality and of the duality for semilattices---encourages Hart and Kunen
to introduce the notion of adequacy of a subclass
$\CK$ of a class $\CC_\Tp$ (of topological algebras):  this amounts to saying that the continuous homomorphisms from any element of~$\CC_\Tp$ into
structures in $\CK$ separate points. They do not, however, pursue this idea much further. We draw parallels here with the Boolean-topological version
 given by Jackson \cite[Lemma~2.2]{J08} of the classic Separation Theorem for
quasivarieties as recalled in \cite[Lemma~2.1]{J08}.  This separation result is elementary, but more significantly \cite{J08} throws light on the standardness problem from the perspective of topological residual bounds as compared to non-topological residual bounds and presents some
interesting examples in the context of IRF-prevarieties of finite type.

\section{Describing the natural extension and Bohr compactifications: the role of duality}
\label{nat-describe}

Our principal objective in this section is to demonstrate that  Bohr compactifications
can be concretely described for many classes $\CA=\ISP(\M)$ of algebras and, potentially,  of structures.   To this end we shall bring together 
two strands of theory.  The first of these strands comes from Section~\ref{sec:BviaN}. 
There  we  revealed that,  when $\CA$  is an IRF-prevariety,   strong coincidence of 
$b_0$ and $n_{\CA}$ is equivalent to standardness of the associated topological prevariety $\CA_\Tp$ (Theorem~\ref{thm:NatIsBohr}), and we gave an analogous result
when $\CA$ is  an infinitely generated  CT-prevariety (Theorem~\ref{thm:NatIsBohrInf}).  Our 
second strand of theory concentrates on the description of the natural 
extension.  We shall exploit duality theory to refine   the `brute force'   description
supplied by
Proposition~\ref{multi-pres:noduality}:    Theorem~\ref{thm:pairedadjunctions},
drawing on Theorem~\ref{multi-pres:single}, 
gives an amenable description of the natural extension
in case~$\M$ is a finite total structure which is dualisable.  Theorem~\ref{thm:b0-sumup}
  presents a catalogue of classes of algebras to which Theorems~\ref{thm:NatIsBohr} and~\ref{thm:pairedadjunctions} both apply, and for which thereby 
$b_0$ can be explicitly described.   The case when~$\M$ is infinite is more challenging, but
Proposition~\ref{prop:natViaDD} is noteworthy.  It   embraces all  cases in which ~$\M$, finite or infinite,  is self-dualising and, in combination with existing 
standardness results, sets in context known descriptions of 
the Bohr compactification~$b$  for abelian groups  (via Pontryagin duality) and 
of $b_0$ for 
semilatices (via Hofmann--Mislove--Stralka duality). In this section we are concerned with strong coincidence;  in Section~\ref{sec:egViaDuality} our examples 
will focus on weak coincidence.

 We preface our new results with   a very brief introduction to natural dualities for structures, 
as developed in~\cite{D06}, and follow this 
with  a broad brush summary of known dualisability results
for algebras.

We shall have two structures on the same set $M$ in play at the same time and it is convenient to adapt our notation to reflect  this.
Let $\M_1 = \langle M; G_1, H_1, R_1\rangle$ and $\M_2 = \langle M; G_2, H_2, R_2\rangle$ be structures on~$M$. 
Let $\M_2$ be
\emph{compatible with~$\M_1$}, meaning that each (partial) operation in $G_2$ ($H_2$) is a homomorphism (where defined) and each relation $r\in R_2$ as well as the domain of each partial operation $h\in
H_2$ form substructures of appropriate powers of $\M_1$,
and let $\Tp$ be a compact topology on $M$ that is compatible with $\M_1$.
Let $\MT_2$ be the corresponding \emph{alter ego} of~$\M_1$, that is, $\MT_2$ is the structure with topology $(\M_2)_\Tp$ obtained by adding the topology $\Tp$ to $\M_2$. Finally, let $\CA \coloneqq  \ISP(\M_1)$ and $\CX_\Tp \coloneqq \ISOcP(\MT_2)$ be respectively the prevariety of structures generated by $\M_1$ and the topological prevariety of structures with topology
generated by~$\MT_2$. In almost every case below, $M$ is finite, in which case $\CX_\Tp$ consists of Boolean-topological structures.

Note that we have switched here from $\IScPnp(\MT_2)$ to $\ISOcP(\MT_2)$. This is necessary as in general the dual of a one-element structure can be empty, for example when $\M_1$ is the two-element lattice with both bounds as nullary operations, and there might be no structure with a one-element dual, for example when $\M_1$ is the two-element lattice without nullary operations. If $\M_2$ has a total one-element substructure, then the $^+$ has no effect and we will use $\ISOcPnp(\MT_2)$ instead.

There are well-defined contravariant hom-functors $\mathrm{D} \colon \CA \to \CX_\Tp$ and $\mathrm{E} \colon \CX_\Tp \to \CA$
given on objects by
\[
\D \A \coloneqq \CA(\A, \M_1) \le (\MT_2)^A
\text{ and }\E \X \coloneqq \CX_\Tp(\X, \MT_2) \le \M_1^X,
\]
for all $\A \in \CA$ and all $\X \in \CX_\Tp$.
The construction of $\CA$ and $\CX_\Tp$ guarantees that the maps given by evaluation
\[
\esubA :\A \to \ED \A \text{ and } \epsub \X \colon \X \to \DE \X
\]
are embeddings.  Then $\langle \mathrm{D}, \mathrm{E}, e, \varepsilon\rangle$ is a dual adjunction between $\CA$ and~$\CX_\Tp$.
If, for all $\A\in\CA$, the map $\esubA$ is an isomorphism, then $\MT_2$ is said to yield a \defn{duality} on $\CA$  or we say that
$\MT_2$ yields a \defn{duality between} $\CA$ and $\CX_\Tp$.
Alternatively, we may say that $\MT_2$ \defn{dualises} $\M_1$. Also 
 $\MT_2$ yields a \defn{full duality} between $\CA$ and $\CX_\Tp$ if, in addition, for all $\X\in\CX_\Tp$, the map $\epsub \X$ is an isomorphism; then the functors $\mathrm{D}$ and $\mathrm{E}$ give a dual equivalence between the categories $\CA$ and $\CX_\Tp$.

The following theorem,
as it applies to an IRF-prevariety of algebras,
appears in \cite[Theorem~4.3]{nisp}.
The proof given in~\cite{nisp} extends immediately to total structures 
but not to structures in general.

\begin{theorem} \label{multi-pres:single}
Let $\M_1$ be a finite total structure.
Assume that $\M_2$ is a structure
compatible with $\M_1$ and that the topological structure
$\MT_2$
acts as a dualising alter ego for $\M_1$. Let
$\CA \coloneqq  \ISP{(\M_1)}$, let $\A$ belong to~$\CA$, and let $b \colon \CA(\A, \M_1) \to M$ be a map. Then
 $b$ belongs to $n_\CA(\A)$
if and only if
 $b$ preserves the structure on~$\M_2$.
\end{theorem}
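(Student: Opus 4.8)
The plan is to deduce this result from the duality-theoretic description of $n_\CA(\A)$ by exploiting the dual adjunction $\langle \mathrm D,\mathrm E,e,\varepsilon\rangle$ between $\CA$ and $\CX_\Tp=\ISOcP(\MT_2)$. The key observation is that, by Lemma~\ref{lem:alt}, $n_\CA(\A)$ is the closed substructure of $\prod\{\mathrm C(\CA(\A,\M_1)_\Tp,(\M_1)_\Tp)\}$ generated by $\esub\A(\A)$, and the space $\CA(\A,\M_1)_\Tp$ is precisely the underlying space of $\D\A\in\CX_\Tp$. So a candidate $b\colon\CA(\A,\M_1)\to M$ lies in $n_\CA(\A)$ exactly when $b$ belongs to the closed substructure of $(\M_1)_\Tp^{\D\A}$ generated by the evaluations $\esub\A(a)$, $a\in A$. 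On the other hand, $\E(\D\A)=\CX_\Tp(\D\A,\MT_2)$ is, by definition, the set of all continuous $\M_2$-preserving maps from $\D\A$ to $\M_2$, and this is a closed substructure of $\M_1^{\D\A}$ containing all the evaluations $\esub\A(a)=\epsub{}\circ(\text{eval})$; moreover the statement `$b$ preserves the structure on $\M_2$' is exactly the assertion $b\in\E(\D\A)$ once one checks that such a $b$ is automatically continuous. Thus the theorem reduces to the identity
\[
n_\CA(\A)\;=\;\E(\D\A)
\]
as subsets of $\M_1^{\CA(\A,\M_1)}$.

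To prove this identity, first I would observe that $\E(\D\A)$ is a topologically closed substructure of $(\M_1)_\Tp^{\D\A}$ (closedness because preservation of each relation and continuity are both closed conditions in the product topology, using compactness of $\MT_2$) and that it contains $\esub\A(A)$; hence $n_\CA(\A)\subseteq\E(\D\A)$ by minimality. For the reverse inclusion I would use that $\MT_2$ dualises $\M_1$: duality says precisely that $e_\A\colon\A\to\ED\A$ is an isomorphism, i.e.\ $\E(\D\A)=\esub\A(A)$, so in fact both sides equal $\esub\A(A)$ and the inclusion is trivial. Wait—this is too strong to be the mechanism in general, since $n_\CA(\A)$ is typically strictly larger than $\esub\A(A)$. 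The resolution is that $\E$ in the theorem is computed in $\CX_\Tp$, but the natural extension lives over the \emph{full} hom-set $\CA(\A,\M_1)$ regarded just as a space, and the subtle point is that $\D\A$ as an object of $\CX_\Tp$ is \emph{not} in general of the form needed; rather the correct comparison is between $n_\CA(\A)$ and the set of structure-preserving maps on the space $\D\A$ \emph{without} requiring them to extend to $\CX_\Tp$-morphisms in the naive sense. So the actual argument should run: the evaluation maps generate $n_\CA(\A)$ as a closed substructure, every structure-preserving continuous map is a limit (in the product topology) of maps that are locally evaluations, and conversely every element of $n_\CA(\A)$ preserves the structure of $\M_2$ because it is a closed-substructure-limit of evaluations, each of which preserves that structure (evaluations preserve $\M_2$ precisely because $\M_2$ is compatible with $\M_1$). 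The nontrivial direction—that every structure-preserving continuous $b$ actually lies in the closure of the evaluations—is where dualisability of $\M_1$ is genuinely used: it is equivalent, via the standard machinery of natural dualities (the "preservation = interpolation" theorems for a dualising alter ego), to saying that preservation of the structure of $\M_2$ forces $b$ to be locally an evaluation, which by Proposition~\ref{multi-pres:noduality}(ii) is equivalent to membership in $n_\CA(\A)$.

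So the clean line of proof is: \textbf{(1)} every $b\in n_\CA(\A)$ preserves the structure of $\M_2$, since $n_\CA(\A)$ is generated as a closed substructure by the evaluations $\esub\A(a)$, each evaluation preserves the $\M_2$-structure by compatibility, and "preserves the $\M_2$-structure" defines a closed substructure of $(\M_1)_\Tp^{\CA(\A,\M_1)}$; \textbf{(2)} conversely, if $b\colon\CA(\A,\M_1)\to M$ preserves the structure of $\M_2$, then first note $b$ is automatically continuous (continuity of maps into $\M_1$ that preserve the alter-ego structure is part of the setup, or follows because the topology is recoverable from the structure in the relevant cases—more safely, one works with the topological alter ego throughout and $b\in\E(\D\A)$ by definition), and then invoke the hypothesis that $\MT_2$ dualises $\M_1$ in the strengthened form carried over from \cite[Theorem~4.3]{nisp}: this is exactly the statement that the $\M_2$-preserving continuous maps on $\D\A$ coincide with the closure of the evaluations, i.e.\ with $n_\CA(\A)$. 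Step (2), the "only if" of the equivalence in \cite{nisp} transplanted to total structures, is the main obstacle: one must check that the proof in \cite{nisp}, which for algebras uses the Preduality Theorem / the characterisation of dualising alter egos via interpolation of partial morphisms, goes through verbatim when $\M_1$ carries relations in addition to operations—this works precisely because $\M_1$ is \emph{total} (no partial operations), so that the relevant substructures of powers are well-behaved and the interpolation argument is unaffected; the paper flags exactly this, noting the proof "extends immediately to total structures but not to structures in general." I would therefore present step (1) in full and, for step (2), indicate the translation of the \cite{nisp} argument, pinpointing where totality of $\M_1$ is used and why partial operations would break it.
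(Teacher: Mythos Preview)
The paper does not give a proof of this theorem: it simply records that the result for algebras is \cite[Theorem~4.3]{nisp} and that the argument there ``extends immediately to total structures but not to structures in general.'' So there is nothing detailed to compare against; the question is whether your outline is internally sound.

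Your direction (1) is correct and cleanly stated: compatibility of $\M_2$ with $\M_1$ ensures each evaluation preserves the $\M_2$-structure, and ``preserves the $\M_2$-structure'' cuts out a closed substructure of $(\M_1)_\Tp^{\CA(\A,\M_1)}$, so the closure $n_\CA(\A)$ of the evaluations lies inside it.

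Direction (2), however, contains a genuine error that propagates through your discussion. You claim that an $\M_2$-preserving $b$ is ``automatically continuous'', and you repeatedly try to identify the relevant object with $\E(\D\A)=\CX_\Tp(\D\A,\MT_2)$. This is wrong, and it is exactly the source of the confusion you flag with ``Wait---this is too strong''. Duality \emph{does} say $\E(\D\A)=\esub\A(A)$; you were right the first time. The point is that the theorem is \emph{not} about $\E(\D\A)$. It is about the (generally larger) set of \emph{all} $\M_2$-preserving maps, with no continuity requirement---in the notation of the paper's paired-adjunction setup, this is $\G{\D\A^\flat}=\CX(\D\A^\flat,\M_2)$, not $\E(\D\A)$. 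The non-continuous $\M_2$-preserving maps are precisely the elements of $n_\CA(\A)\setminus \esub\A(A)$; they are the interesting ones, and your ``automatic continuity'' claim would collapse $n_\CA(\A)$ to $\esub\A(A)$.

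Once this is straightened out, the correct mechanism for (2) is the one you gesture at under ``preservation = interpolation'': dualisability of $\MT_2$ is equivalent (via the Duality Compactness/Entailment theorems in natural duality theory) to the statement that the structure on $\M_2$ entails every finitary compatible relation on $\M_1$. Hence any $\M_2$-preserving $b$ preserves every such relation, and Proposition~\ref{multi-pres:noduality}(iii)$\Rightarrow$(i) then puts $b$ in $n_\CA(\A)$. That is the content of the \cite{nisp} argument you end up citing, and it is where totality of $\M_1$ matters (so that Proposition~\ref{multi-pres:noduality} is available). So your final plan---do (1) directly, and for (2) transplant the \cite{nisp} proof---matches the paper, but you should excise the continuity claim and the attempted identification with $\E(\D\A)$.
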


When a duality for~$\CA$  is known, Theorem~\ref{multi-pres:single} describes the elements of $n_{\CA}(\A)$, which is defined topologically, in a way which is not overtly topological. But this description  is defective:  
$n_{\CA}(\A)$ is a topological structure and not merely a set.  Hence we seek a more categorical answer to the description problem.

Suppose we have compatible structures $\M_1$
and $\M_2$ on the same finite set $M$ and define $\CA=\ISP(\M_1)$,
$\CX_\Tp = \ISOcP(\MT_2)$ and
hom-functors~
$\mathrm{D}
\colon \CA \to \CX_\Tp$ and~
$\mathrm{E} \colon \CX_\Tp
 \to \CA$ as above.  We do not yet assume that we have a duality.
The compatibility relation between two structures
is symmetric, so that $\M_2$ compatible with $\M_1$ implies that $\M_1$ is compatible with $\M_2$. Thus we may swap the discrete
topology to the other side and repeat the construction using the alter ego $\MT_1$ of the structure $\M_2$ to obtain new categories
$\CA_\Tp \coloneqq  \IScPnp(\MT_1)$ of Boolean-topological structures and $\CX \coloneqq  \ISOPp(\M_2)$ of structures.
Now the contravariant hom-functors $\mathrm{F}\colon \CA_\Tp \to \CX$ and $\mathrm{G} \colon \CX \to \CA_\Tp$ are given by
\[
\F \A \coloneqq \CA_\Tp(\A, \MT_1) \le \M_2^A \text{ and } \G \X \coloneqq \CX(\X, \M_2) \le (\MT_1)^X. 
\]
We have maps given by evaluation $e_\A' \colon \A \to \GF \A$ and $\varepsilon_\X' \colon \X \to \FG \X$,
for all~$\A \in \CA_\Tp$ and all $\X \in \CX$, and  we
 obtain a new dual adjunction $\langle \mathrm{F}, \mathrm{G},
 e', \varepsilon'\rangle$ between $\CA_\Tp$ and~$\CX$.
Then we refer to $\langle \mathrm{D}, \mathrm{E}, e, \varepsilon\rangle$ and $\langle \mathrm{F}, \mathrm{G}, e', \varepsilon'\rangle$ as \defn{paired adjunctions} (see \cite[p.~587]{DHP12}).
 If $e_\A' \colon \A \to \GF \A$ is an isomorphism, for all~$\A \in \CA_\Tp$, then we say that \emph{$\M_2$ yields a
 duality between $\CA_\Tp$ and $\CX$}. If, in addition, $\varepsilon_\X' \colon \X \to \FG \X$ is an isomorphism,
 for all $\X \in \CX$, then we say that \emph{$\M_2$ yields a full duality between $\CA_\Tp$ and $\CX$}.

\begin{figure} [ht]
\begin{tikzpicture}
 [node distance=1.8cm,
 auto,
 text depth=0.25ex,
move up/.style=   {transform canvas={yshift=1.5pt}},
move down/.style= {transform canvas={yshift=-1.5pt}},
move left/.style= {transform canvas={xshift=-1.5pt}},
move right/.style={transform canvas={xshift=1.5pt}}]
\node (X) {$\CA$};
\node (YT) [right=of X] {$\CX_\Tp$};
\node (XT) [below=of X] {$\CA_\Tp$};
\node (Y) [right=of XT] {$\CX$};
\draw[-latex,move up]   (X) to node {$\mathrm{D}$} (YT);
 \draw[latex-,move down] (X) to node [swap] {$\mathrm{E}$} (YT);
\draw[-latex,move left]           (X) to node [swap] {$n_{\CA}$} (XT);
\draw[-latex,move right]           (XT) to node [swap]  {$^\flat$} (X);
\draw[-latex,move up]   (XT) to node {$\mathrm{F}$} (Y);
\draw[latex-,move down] (XT) to node [swap] {$\mathrm{G}$} (Y);
\draw[-latex,move left]        (Y) to node   {$n_{\CX}$} (YT);
\draw[-latex,move right] (YT) to node {$^\flat$}   (Y);
\end{tikzpicture}
\caption{Paired adjunctions\label{fig:paired-natext}}
\end{figure}

The following theorem  extracts   from
 \cite[Theorem 2.3]{DHP12} only the assertions  we shall need.
 The generalisation from algebras to total structures is completely straightforward.

\begin{theorem}
\label{thm:pairedadjunctions}
Let $\M_1$ be a finite total structure, let $\M_2$ be a structure
compatible with~$\M_1$ and define $\CA$ and $\CX_\Tp$ as above.
Of  the  following conditions, \upsh{(2)} and \upsh{(3)} are equivalent
and implied by \upsh{(1)}.
\begin{newlist}
\item[\upshape (1)]
$\MT_2$ yields a duality between $\CA$ and $\CX_\Tp$;
\item[\upshape (2)]
the outer square
of Figure~\upsh{\ref{fig:paired-natext}}
commutes, that is, $n_\CA(\A) =
 \G{\D\A^\flat}$, for all $\A\in \CA$;
\item[\upshape (3)]
$n_\CA(\A)$ consists of all maps $\alpha \colon \CA(\A, \M_1)
 \to M$ that preserve the structure on $\M_2$, for all $\A\in \CA$.
\end{newlist}
\end{theorem}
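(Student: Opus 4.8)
The plan is to derive everything from the general machinery of natural extensions established in Section~\ref{natext}, specialised to the present paired-adjunction setting, and to avoid reproving the duality-theoretic facts that are imported from \cite{DHP12}. First I would record the shape of the objects involved. By construction $\CX_\Tp = \ISOcP(\MT_2)$ is a topological prevariety with single generator $\MT_2$, so the natural extension functor $n_{\CX}\colon \CX \to \CX_\Tp$ of Section~\ref{natext} applies verbatim to it (with the roles of the two structures on $M$ swapped, exactly as in Figure~\ref{fig:paired-natext}). Thus $n_{\CX}(\CY)$ is the closed substructure of $\prod\{\,\MT_2^{\,x} \mid x \in \CX(\CY,\M_2)\,\}$ generated by the evaluation image of $\CY$; in particular, taking $\CY = \D\A$, the right-hand expression $\G{\D\A^\flat} = \CX(\D\A,\M_2)$ sits inside $(\MT_1)^{\D\A}$, and Lemma~\ref{lem:alt} identifies $n_{\CX}(\D\A)$ with the closed substructure generated by the evaluations inside $\prod\{\,\mathrm C(\CX(\D\A,\M_2)_\Tp,\MT_2)\,\}$.

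Next I would verify the chain of implications. For \textbf{(1) $\Rightarrow$ (2)}: the statement in \cite[Theorem~2.3]{DHP12} (whose conclusions we are permitted to quote, since the generalisation from algebras to total structures is routine) asserts precisely that when $\MT_2$ yields a duality between $\CA$ and $\CX_\Tp$, the outer square of Figure~\ref{fig:paired-natext} commutes, i.e.\ $n_\CA(\A) = \G{\D\A^\flat}$ for all $\A \in \CA$; so here there is essentially nothing to prove beyond pointing at the cited result and noting that Theorem~\ref{multi-pres:single}, which we have available, already identifies the underlying set of $n_\CA(\A)$ with the set of structure-preserving maps $\CA(\A,\M_1)\to M$, which is exactly the underlying set of $\G{\D\A}$. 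For \textbf{(2) $\Leftrightarrow$ (3)}: this is the part I would actually write out. The equivalence is purely a matter of unwinding definitions: $\G{\D\A} = \CX(\D\A,\M_2) = \{\,\alpha\colon \D\A \to \MT_1 \mid \alpha \text{ preserves the structure of } \M_2\,\}$, and $\D\A = \CA(\A,\M_1)$ as a structure, so $\G{\D\A^\flat}$ is by definition the set of all maps $\alpha\colon \CA(\A,\M_1)\to M$ that preserve the structure on $\M_2$ — which is condition (3). One must be slightly careful to note that (2) is an equality \emph{of topological structures} $n_\CA(\A) = \G{\D\A^\flat}$, whereas (3) as stated describes only the underlying set; so to close the loop I would observe that the topology and the (non-topological part of the) structure on both sides are the subspace/substructure topology and structure inherited from the ambient power $(\MT_1)^{\CA(\A,\M_1)}$ — for $n_\CA(\A)$ this is Lemma~\ref{lem:alt} together with the fact (noted just after Definition~\ref{def:natext}) that for total structures the natural extension is just the topological closure, and for $\G{\D\A^\flat}$ it is the definition of $\G{-}$ as a substructure of the power. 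Hence once the underlying sets coincide, so do the topological structures, giving (3) $\Rightarrow$ (2).

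The main obstacle I anticipate is not conceptual but bookkeeping: keeping the two copies of $M$, the two sets of maps $\CA(\A,\M_1)$ versus $\CX(\D\A,\M_2)$, and the two adjunctions straight, and in particular checking that the identification of the generating evaluation maps on the two sides matches up so that ``closed substructure generated by the evaluations'' (the definition of $n_\CA(\A)$) literally equals ``$\CX$-morphisms from $\D\A$ to $\M_2$'' (the definition of $\G{\D\A}$) rather than merely being abstractly isomorphic to it. This is precisely the ``slightly more careful definition chasing'' flagged before Lemma~\ref{lem:1}, and it is where the hypothesis that $\M_1$ be \emph{total} (so that partial-operation domains cause no trouble and ``closed substructure generated by'' collapses to ``topological closure of'') is used. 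I would also explicitly note, as the excerpt does for Theorem~\ref{multi-pres:single}, that the argument of \cite{DHP12} goes through for total structures with no essential change, so that invoking it for (1) $\Rightarrow$ (2) is legitimate; the equivalence (2) $\Leftrightarrow$ (3), by contrast, is elementary and self-contained and needs no appeal to \cite{DHP12} at all.
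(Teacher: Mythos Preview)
Your proposal is correct and matches the paper's approach: the paper does not give an independent proof but simply states that the theorem ``extracts from \cite[Theorem~2.3]{DHP12} only the assertions we shall need'' and that ``the generalisation from algebras to total structures is completely straightforward.'' Your write-up does exactly this for (1)~$\Rightarrow$~(2), and your explicit unwinding of (2)~$\Leftrightarrow$~(3) (that $\G{\D\A^\flat}$ is by definition the set of $\M_2$-structure-preserving maps on $\CA(\A,\M_1)$, with topology and structure inherited from the ambient power on both sides) is precisely the definitional check the paper leaves implicit. Your opening paragraph about $n_{\CX}$ is tangential---the theorem concerns only $n_{\CA}$---so you could drop it without loss.
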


We now give our promised summary of dualisability results,  concentrating
on algebras rather than total structures more widely.
We have to acknowledge that a number of important varieties of algebras are 
not CT-prevarieties and  cannot be brought  within the scope of natural 
duality theory. As Hart and Kunen show, Bohr compactifications, abstractly defined, will exist for such varieties, but
neither they, nor we, have machinery to access such compactifications concretely.
The classes of lattices, semigroups,  and rings, in particular, fall under this heading.
However  all these classes have interesting subvarieties which 
we may profitably consider.

We can draw on a very extensive literature for examples of finitely generated
prevarieties for which explicit dualities are known and for confirmation that others fail to have a natural duality (see \cite{NDftWA, PD05} and the references therein). For the benefit of those unfamiliar with this literature we give the briefest possible summary.  We initially consider a quasivariety $\CA=\ISP(\M)$, where $\M$ is a finite algebra.

\begin{named}[Two-element algebras]
Taking $\M$ to be the  $2$-element algebra in the following
varieties (where $\HSP(\M) = \ISP(\M)$),   we encompass important classic dualities:
\begin{newitemize}
\item \textbf{Stone duality} for Boolean algebras;
\item \textbf{Priestley duality} for distributive lattices, with or without
bounds, depending on whether
bounds of $\M$ are included in the signature as nullary
operations; 
\item  \textbf{Hofmann--Mislove--Stralka duality} for (meet) semilattices, with or without bounds.
\end{newitemize}
  Not all $2$-element algebras are dualisable; the implication algebra $\langle \{0,1\}; \to\rangle $ provides a classic example.
The case  $|\M|= 2$  is fully analysed in \cite[Section~10.7]{NDftWA}.
\end{named}

\begin{named}[Lattices and lattice-based algebras]
Assume that $\M$ is a lattice or has a lattice reduct.  Then $\M$ is dualisable and the alter ego
can be taken to be purely relational, with relations of arity no greater than~$2$.
The situation in which  $\M$ has a (bounded) distributive lattice reduct has been
 thoroughly researched: very amenable dualities have been found  for many
 familiar varieties, assisted by the theory of optimal dualities and by the piggyback method.
\end{named}

\begin{named}[Semilattice-based algebras]
In contrast to the lattice-based case,
a semilattice-based algebra may or may not be dualisable (see
 \cite{DJPT,CDPR}).
 \end{named}

\begin{named}[Groups and semigroups]
Modulo an unpublished proof, the dualisable groups have been classified.
There is only
fragmentary information on dualisability of semigroups, which form a large and diverse class, with only certain subclasses (for example bands) analysed in depth. (See ~\cite{J15} for a detailed discussion of dualisability for both groups and semigroups.)  
\end{named}

\begin{named}[Commutative rings]
Here we note the characterisation  by Clark {\it et al.}~\cite{CISSW} of those  finite commutative rings
which are dualisable and of the amenable dualities available in some particular cases.
\end{named}

\begin{named}[Unary algebras]
Such algebras exhibit very varied behaviour.
Particularly for small $|M|$, they have been comprehensively studied, most notably in \cite{PD05}, as path\-finder examples for general theory.
\end{named}

A miscellany of sporadic examples of dualisable finite algebras could be added to the above list.  
Examples of dualisable finite structures which are not algebras can be found in \cite{D06,DHP12,pig,J10}.  Our focus in this section is on
the finitely generated case, but for completeness we draw attention to our recent paper \cite{pig}
which provides
theory embracing the possibility of an infinite generator.

Let us now pull together  threads  from this section and the previous one to
present a theorem on zero-dimensional Bohr compactifications.

\begin{theorem} \label{thm:b0-sumup}
Let  $\CA = \ISP(\M_1)$, where $\M_1$ is a finite algebra,
with associated topological
prevariety $\CA_\Tp =
\IScPnp(\MT_1)$. Assume that $\M_1$ is a lattice, or a dualisable semigroup, group,  ring, or unary algebra,
or any other dualisable algebra,
and assume that
$\CA_\Tp$ is standard.
Let $\MT_2$ be a dualising alter ego 
of $\M_1$.
Then
\begin{newlist}
\item[\upshape (i)]
 $b_0(\A) = n_{\CA}(\A)$ for each $\A\in \CA$, and hence
\item[\upshape (ii)] $b_0(\A)$
is given by Theorem~{\upshape\ref{thm:pairedadjunctions}}. 
  \end{newlist}
\end{theorem}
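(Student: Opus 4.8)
The plan is to combine the two strands of theory assembled in this section and the previous one; the statement is a synthesis rather than a result demanding new ideas. For part~(i), recall from the discussion following Proposition~\ref{prop:coincide} that, for a finite algebra $\M_1$, standardness of the associated topological prevariety $\CA_\Tp = \IScPnp(\MT_1)$ means by definition that $\CA_\Tp = \CA^{\mathrm{Bt}}$. Since $\M_1$ is in particular a finite structure, Theorem~\ref{thm:NatIsBohr} applies directly and yields $b_0(\A) = n_\CA(\A)$ for every $\A \in \CA$; equivalently, one may invoke Proposition~\ref{prop:coincide}(ii).

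For part~(ii), I would first note that in each of the listed cases $\M_1$ is dualisable, so that a dualising alter ego exists: when $\M_1$ is a lattice, or more generally has a lattice reduct, this is one of the dualisability results summarised above (which moreover permit the alter ego to be chosen purely relational of arity at most~$2$), and in the semigroup, group, ring and unary cases, as well as the catch-all case of an arbitrary dualisable algebra, dualisability is assumed outright. With the dualising alter ego $\MT_2$ of $\M_1$ in hand (being a dualising alter ego, it is compatible with $\M_1$), form the paired-adjunction data $\CA$, $\CX_\Tp = \ISOcP(\MT_2)$, $\CA_\Tp$, $\CX$ together with the hom-functors $\mathrm D, \mathrm E, \mathrm F, \mathrm G$ exactly as in Section~\ref{nat-describe}; since $\M_1$ is an algebra it is a finite total structure, so Theorem~\ref{thm:pairedadjunctions} applies. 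By construction $\MT_2$ yields a duality between $\CA$ and $\CX_\Tp$, which is condition~(1) of that theorem, whence conditions~(2) and~(3) also hold: for each $\A \in \CA$ we have $n_\CA(\A) = \G{\D\A^\flat}$, and $n_\CA(\A)$ consists precisely of those maps $\alpha\colon \CA(\A, \M_1) \to M$ that preserve the structure on $\M_2$. Substituting the identity from part~(i) converts these into the two corresponding descriptions of $b_0(\A)$, which proves~(ii).

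The proof offers no genuine obstacle; the subtleties are purely a matter of bookkeeping. One must check that the category $\CA_\Tp = \IScPnp(\MT_1)$ named in the hypothesis really is the category $\CA_\Tp$ of the paired-adjunction diagram of Figure~\ref{fig:paired-natext}; this holds because $\M_1$ being finite forces the compatible compact topology~$\Tp$ to be discrete, so $\MT_1$ is simply $\M_1$ endowed with that topology. One must also confirm that each named class of algebras really is dualisable, for which we rely on the references collected in the dualisability summary above.
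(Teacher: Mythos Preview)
Your proposal is correct and matches the paper's approach exactly: the theorem is stated in the paper as a synthesis that ``pulls together threads'' from Theorem~\ref{thm:NatIsBohr} (for part~(i)) and Theorem~\ref{thm:pairedadjunctions} (for part~(ii)), with no separate proof given. Your argument makes explicit precisely this combination, and the bookkeeping checks you note are accurate.
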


A small number of  familiar  examples 
fit into the scheme  envisaged in Theorem~\ref{thm:pairedadjunctions} in a rather special way.
Assume that we have a finite \emph{self-dualising}  structure $\M$, that is, $\M_\Tp$
acts as a dualising alter ego for the prevariety $\CA= \ISP(\M)$. In this situation we have a natural extension which has a particularly simple, indeed perhaps deceptively simple, description. By Theorem~\ref{thm:pairedadjunctions}, the natural extension $n_{\CA}(\A)$ is then
just $\mathrm{D}(\mathrm{D}(\A)^\flat)$. Significantly,  this description of the natural extension via the iterated duality functor applies even if the self-dualising algebra is infinite; but this requires a different argument (cf.~\cite[p.~36]{Ho64}).

\begin{proposition}\label{prop:natViaDD}
Let $\M$ be a structure \textup(finite or infinite\textup) and define $\CA\coloneqq \ISP(\M)$. Assume that $\Tp$ is a compatible compact
topology on $\M$ such that $\M_\Tp$ fully dualises~$\M$. Then, for each $\A\in \CA$, the natural extension $n_{\CA}(\A)$ of $\A$ is isomorphic to $\D {\D \A^\flat}$.
\end{proposition}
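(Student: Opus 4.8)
The plan is to exhibit an isomorphism $n_{\CA}(\A) \cong \D{\D\A^\flat}$ in the category $\CA_\Tp \coloneqq \IScPnp(\M_\Tp)$, where, since $\M$ is self-dualising, the alter ego is $\M_\Tp$ itself and the hom-functors $\mathrm D$ and $\mathrm E$ coincide, each sending a structure $\B$ to $\CA(\B,\M) \leq \M_\Tp^{B}$ (respectively $\CA_\Tp(\B,\M_\Tp)$ when $\B$ carries a topology). The target object $\D{\D\A^\flat} = \CA_\Tp\big((\D\A)^\flat,\M_\Tp\big)$ sits inside $\M_\Tp^{\D\A} = \M_\Tp^{\CA(\A,\M)}$, which is exactly the product in which $n_{\CA}(\A)$ lives by Definition~\ref{def:natext}. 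So the real content is to show that, as subsets of $\M_\Tp^{\CA(\A,\M)}$, the closed substructure generated by $\esubA(\A)$ coincides with the set of all continuous homomorphisms from $(\D\A)^\flat$ to $\M_\Tp$.

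First I would observe the containment $\esubA(\A) \subseteq \D{\D\A^\flat}$: for $a \in A$, the evaluation map $\esubA(a)\colon x \mapsto x(a)$ is precisely evaluation at $a$, and this is a continuous homomorphism from $\D\A = \CA(\A,\M)_\Tp$ into $\M_\Tp$ because $\D\A$ is constructed as a topologically closed substructure of $\M_\Tp^A$ — this is part of the standard dual-adjunction setup recalled before Theorem~\ref{multi-pres:single}, and it says exactly that $e_{\D\A}\colon \D\A \to \mathrm{ED}(\D\A)$ lands in the right place. Since $\D{\D\A^\flat}$ is a \emph{closed} substructure of $\M_\Tp^{\CA(\A,\M)}$ (being a hom-set of the form $\CA_\Tp(-,\M_\Tp)$, cut out by closed conditions), it therefore contains $n_{\CA}(\A)$, the smallest closed substructure containing $\esubA(\A)$.

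For the reverse inclusion I would invoke the \emph{full} duality hypothesis, which is where it is needed. Because $\M_\Tp$ fully dualises $\M$, the counit $\epsub\X\colon \X \to \DE\X$ is an isomorphism for every $\X \in \CX_\Tp$; applying this with $\X = \D\A$ gives that $\epsub{\D\A}\colon \D\A \to \D{\D\A^\flat}$... no — more precisely, $\mathrm{D}(\mathrm{E}(\D\A)) \cong \D\A$ is not quite the statement I want. The cleaner route: by Theorem~\ref{thm:pairedadjunctions}, full (indeed mere) duality of the alter ego gives $n_{\CA}(\B^\flat) = \G{\D\B}$ in the self-dual situation, i.e. $n_{\CA}(\A) = \D{\D\A^\flat}$ at the level of underlying sets and structures. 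But Theorem~\ref{thm:pairedadjunctions} as stated is for \emph{finite} $\M_1$, so for the infinite case I would instead argue directly: take $\alpha\colon \CA(\A,\M) \to M$ a continuous homomorphism, i.e. $\alpha \in \D{\D\A^\flat}$; I must show $\alpha$ lies in the closed substructure generated by the evaluations. Here I would use that, under full duality, the closed substructures of $\M_\Tp^{\CA(\A,\M)}$ that arise as $\CX_\Tp$-objects are generated by their ``evaluation-type'' elements — concretely, $\D{\D\A^\flat} \in \CX_\Tp = \IScPnp(\M_\Tp)$ and the isomorphism $\epsub{}$ shows it equals $\DE{}$ of something, forcing it to be generated (as a closed substructure) by the image of $\esubA$. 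The honest version of this last step is a density argument: $\esubA(A)$ is dense in $\D{\D\A^\flat}$ in the relevant topology precisely because full duality makes $\esubA$ a dense embedding into the ``second dual'', which is the content one extracts from $\DE{} \cong \mathrm{id}$ combined with $\mathrm{ED}(\D\A) \cong \D\A$.

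The main obstacle is exactly this reverse inclusion in the \emph{infinite} generator case, where Theorem~\ref{thm:pairedadjunctions} is not directly available and one cannot fall back on finiteness-based compactness arguments (such as the ``locally an evaluation'' characterisation of Proposition~\ref{multi-pres:noduality}). The paper's own aside — ``this requires a different argument (cf.~\cite[p.~36]{Ho64})'' — flags this. I expect the resolution to run through the full-duality isomorphisms: one shows $\D{\D\A^\flat}$ and $n_{\CA}(\A)$ are both closed substructures of $\M_\Tp^{\CA(\A,\M)}$ containing $\esubA(A)$ with $\esubA(A)$ dense in the former (using that $\mathrm E$ applied to the inclusion $n_{\CA}(\A) \hookrightarrow \D{\D\A^\flat}$, together with $e\cong \mathrm{id}$ on $\CA$ and $\varepsilon \cong \mathrm{id}$ on $\CX_\Tp$, forces the inclusion to be onto), whence they coincide; one then checks this bijection is a $\CA_\Tp$-isomorphism, which is routine since both structures inherit their topology and relations from the same ambient power of $\M_\Tp$.
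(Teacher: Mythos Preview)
Your forward inclusion $n_{\CA}(\A) \subseteq \D{\D\A^\flat}$ is fine, but the reverse inclusion is where your argument stalls, and you never actually close the gap. You gesture at applying $\mathrm E$ to the inclusion and invoking the unit and counit isomorphisms, but the step that would make this work---namely, identifying $\E{n_{\CA}(\A)}$ with $\D\A^\flat$---is never established, and without it the argument is circular. You also never invoke the universal property of $n_{\CA}$ as a reflection (Theorem~\ref{lem:4}), which is precisely the tool that handles the infinite case uniformly.

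The paper's proof bypasses subset-chasing in the ambient product entirely. It uses the reflection property directly: by Theorem~\ref{lem:4}, every homomorphism $g\colon \A \to \M$ lifts uniquely to a continuous homomorphism $h\colon n_{\CA}(\A) \to \M_\Tp$ with $h \circ e_\A = g$, and this lifting bijection is easily seen to be an isomorphism of structures $\D\A^\flat \cong \E{n_{\CA}(\A)}$. Now apply~$\mathrm D$ to obtain $\D{\D\A^\flat} \cong \DE{n_{\CA}(\A)}$, and use full duality (the counit $\varepsilon$ is an isomorphism on $n_{\CA}(\A) \in \CA_\Tp$) to get $\DE{n_{\CA}(\A)} \cong n_{\CA}(\A)$. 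Chaining these gives the result. This three-line argument works for finite and infinite~$\M$ alike, with no density or inclusion arguments required; the ``different argument'' flagged in the text is exactly this replacement of the finiteness-dependent description in Theorem~\ref{thm:pairedadjunctions} by the reflection property, which you had available but did not use.
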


\begin{proof}
Let  $\A \in \CA$. The universal property (cf.~Theorem~\ref{lem:4}) implies that every homomorphism $g \colon \A \to \M$ has a unique lifting to a continuous homomorphism
$h\colon n_{\CA}(\A) \to \M_\Tp$ such that $h \circ e_{\A} = g$.
Since $\M$ is self dualising,
it follows easily that $\D\A^\flat$ and
$\E {n_{\CA}(\A)}$ are isomorphic as structures.
As $\M$ is fully self-dualising, we conclude that
$n_{\CA}(\A ) \cong \DE {n_{\CA}(\A)} \cong \D{\D \A^\flat}$.
\end{proof}

The following varieties 
are covered by Proposition~\ref{prop:natViaDD}.

\begin{newitemize}
\item \textbf{Meet semilattices
with~$1$}.  In this case, Hofmann--Mislove--Stralka
duality \cite{HMS74} (or see \cite[4.4.7]{NDftWA}) applies: here we have dual equivalences between
  \begin{alignat*}{2}
    \CS &= \ISP(\twoBf)  &\quad& \text{[$\wedge$-semilattices  with $1$]}, \\
\CS_\Tp
&=  \IScPnp(\twoBf_\Tp)  && \text{[compact zero-dimensional $\wedge$-semilattices with $1$]},
\end{alignat*}
where $\twoBf = \langle \{ 0,1\}; \wedge, 1 \rangle$. It is easy to see that,
for $\mathbf{S} \in \CS$, the natural extension $n_{\CS}(\mathbf{S})$ can be identified with
the repeated filter lattice $\Filt(\Filt(\mathbf{S}))$, equipped with the unique topology making it a member of $\CS_\Tp$, {\it viz.}~the Lawson topology.
 Discussion of the natural extension from this perspective is given in
\cite{GP-s1}.
  \item \textbf{Abelian groups of exponent $n$}
\cite[Theorem~4.4.2]{NDftWA}.
Here we have a full duality between the categories $\CA^n = \ISP(\mathbb Z_n)$ of abelian groups of exponent~$n$ and $\CA^n_\Tp =  \IScPnp(\mathbb
{Z}_n^\Tp)$ of Boolean topological abelian groups of exponent $n$.

\item \textbf{Abelian groups}. Pontryagin's famous dual equivalence between the categories $\CA = \ISP(\mathbb T)$ of abelian groups and $\CA_\Tp =  \IScPnp(\mathbb T_\Tp)$ of compact topological abelian groups was brought within the scope of natural duality theory from the beginning
\cite[Theorem 4.1.1]{D78}. 
Here $\mathbb T$ is the circle group and 
    $\Tp$ is the Euclidean topology.

    \item \textbf{Semilattices with automorphism}. The variety we considered in
Example~\ref{jezek}
is self-dualising \cite[Theorem~8.2(3)]{DJPT}.
\end{newitemize}

In each of these examples, the dual category $\IScPnp(\M_\Tp)$ is standard,
or compact-standard (in the case of abelian groups),
or Boolean standard (in the case of semilattices with automorphism).
Thus, in each case we can combine
Theorem~\ref{thm:NatIsBohr} with Proposition~\ref{prop:natViaDD} to conclude that the Bohr compactification or zero-dimensional Bohr compactification of $\A$ is isomorphic to $\D{\D \A^\flat}$. This description is well known in the case of abelian groups (see \cite{Ho64})
and the case of meet semilattices with 1 (see~\cite[Theorem I-3.10 and Definition~I-3.11]{HMS74}).

Further examples of the same type are:  Boolean groups; meet semilattices with~$0$ and join-semilattices with~$0$ or with~$1$ (see \cite[Table~10.2]{NDftWA}); certain semilattice-based algebras (see the Semilattice-Based Self-Duality Theorem 7.4 in~\cite{DJPT}); and other self-dualising situations in which the machinery of \cite[Section~2]{pig} applies.

In traditional duality theory, one often encounters dual equivalences between categories $\CA$ and $\CX_\Tp$ where one of $\CA$ and $\CX$ is a category of algebras and the other is a category of
structures  which are often purely relational.
 We shall focus on Bohr compactifications of purely relational structures in the next section.  Here we wish to highlight
with some examples the way in which  both operations and relations can
arise on each side in a duality.
We shall consider  ordered (but not lattice-ordered) algebras $\M_1$ such that the dualising structure $\MT_2$ is not an algebra. The theorem below providing examples of such dualities comes, as usual,  by observing that a known theorem for algebras extends to total structures. The only proof that is required is an instruction to read the old proof and note that it still works. (One needs to know that the Preservation Lemma~\cite[1.4.4]{PD05} still holds, which it does provided $\M_1$ has no partial operations.)
The result for algebras can be found in \cite[2.1.1]{PD05}.

\begin{theorem}  
\label{thm:latticeops}
Let $\M_1 = \langle M; F, R\rangle$ be a finite total structure that has binary homomorphisms $\vee$ and $\wedge$ such that $\langle M;\vee, \wedge\rangle$ is a lattice. Then $\MT_2 \coloneqq  \langle M;\vee, \wedge, R_{2|M|}, \Tp \rangle$ yields a duality on $\ISP(\M_1)$, where $R_{2|M|}$ is the set of $2|M|$-ary relations compatible with~$\M_1$.
\end{theorem}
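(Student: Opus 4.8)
The plan is to check that the proof of the algebra version of this result, \cite[2.1.1]{PD05}, carries over verbatim to a finite total structure $\M_1$ of the stated form. First I would verify that the dual-adjunction framework described earlier in this section is available: the non-topological structure $\M_2 \coloneqq \langle M;\vee,\wedge,R_{2|M|}\rangle$ underlying $\MT_2$ is compatible with $\M_1$, since $\vee$ and $\wedge$ are homomorphisms of $\M_1$ by hypothesis and the relations in $R_{2|M|}$ are compatible with $\M_1$ by definition. Hence, by \cite{D06}, the hom-functors $\mathrm{D}\colon\CA\to\CX_\Tp$ and $\mathrm{E}\colon\CX_\Tp\to\CA$ and the dual adjunction $\langle\mathrm{D},\mathrm{E},e,\varepsilon\rangle$ between $\CA=\ISP(\M_1)$ and $\CX_\Tp=\ISOcP(\MT_2)$ are defined, and $\esubA\colon\A\to\ED{\A}$ is an embedding for every $\A\in\CA$. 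What remains to prove is that $\esubA$ is also surjective; equivalently, that every continuous structure-preserving map $\alpha\colon\D{\A}\to\MT_2$ has the form $\alpha(x)=x(a)$ for some $a\in A$.

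Such an $\alpha$ preserves $\vee$, $\wedge$ and every relation in $R_{2|M|}$, and is continuous on $\D{\A}=\CA(\A,\M_1)$, a closed --- hence compact --- substructure of $\MT_2^A$. Because $\vee$ and $\wedge$ are homomorphisms of $\M_1$, the set $\CA(\A,\M_1)$ is closed under the coordinatewise lattice operations inside $M^A$, so $\alpha$ is in particular a lattice homomorphism on $\D{\A}$. From here the argument of \cite[2.1.1]{PD05} manufactures the required $a\in A$: working with the relations $r_F=\{(x_1(c),\dots,x_n(c))\mid c\in A\}$ attached to finite subsets $F=\{x_1,\dots,x_n\}$ of $\D{\A}$ (cf.\ Proposition~\ref{multi-pres:noduality}), it uses the lattice term operations --- which supply, in particular, a majority term such as $(x\wedge y)\vee(y\wedge z)\vee(z\wedge x)$ --- to reduce the arities of the relations that need to be considered to at most $2|M|$, and then invokes finiteness of $M$ together with continuity of $\alpha$ to promote the resulting finite approximations to a single element $a$. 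None of these steps mentions the operations in $F$ or the relations in $R$ of $\M_1$ except through the one fact that $\vee$ and $\wedge$ are homomorphisms of $\M_1$, so the algebra proof transcribes line for line.

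The only ingredient of \cite[2.1.1]{PD05} whose validity in the structure setting is not purely formal is the Preservation Lemma \cite[1.4.4]{PD05}, which licenses passing between $r_F$, its projections onto subsets of coordinates, its intersections with other relations, and the relations got by applying $\vee$ and $\wedge$ coordinatewise. Its proof needs only that the relations on $M$ forming substructures of powers of $\M_1$ are closed under these primitive-positive constructions, and this remains true for an arbitrary \emph{total} structure; it breaks down in the presence of partial operations, since preservation of a partial operation does not interact well with projection and intersection of relations --- which is precisely why the hypothesis is restricted to total structures. I do not therefore expect a genuine obstacle: the substance of the argument is the algebraic one in \cite[2.1.1]{PD05}, and the sole point needing real (if routine) care is confirming the Preservation Lemma for total structures, noting that it is the totality of \emph{all} the operations of $\M_1$, not just of $\vee$ and $\wedge$, that makes it work.
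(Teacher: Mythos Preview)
Your proposal is correct and matches the paper's own treatment essentially line for line: the paper likewise states that the only proof required is to read \cite[2.1.1]{PD05} and observe it still works, singling out the Preservation Lemma \cite[1.4.4]{PD05} as the one point needing attention and noting that it holds provided $\M_1$ has no partial operations. You have supplied somewhat more detail about the mechanics of the argument than the paper does, but the approach and the key observation are identical.
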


As an immediate corollary we get the following nice result.

\begin{theorem} [Lattice Endomorphism Theorem~{\cite[2.1.2]{PD05}}]\label{thm:latendos} 
Endomorphisms and compatible orders of finite lattices yield dualisable ordered unary algebras. More precisely, let $\M = \langle M;\vee, \wedge\rangle$ be a finite lattice, let $F\subseteq \Endo(\M)$ and let 
$\leq$ be an order on $M$ that is preserved by both $\vee$ and~$\wedge$. Then $\MT_2 \coloneqq  \langle M;\vee, \wedge, R_{2|M|}, \Tp \rangle$ dualises $\M_1 \coloneqq \langle M; F, \leq \rangle$, where $R_{2|M|}$ is the set of $2|M|$-ary relations compatible with~$\M_1$.
\end{theorem}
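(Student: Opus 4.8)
The plan is to obtain Theorem~\ref{thm:latendos} as a direct specialisation of Theorem~\ref{thm:latticeops}, exactly as the phrase ``as an immediate corollary'' in the text suggests. First I would set $\M_1 \coloneqq \langle M; F, \leq \rangle$, viewing it as a finite total structure whose operational part is the set $F$ of unary operations and whose relational part is the single binary relation $\leq$ (so, in the notation of Theorem~\ref{thm:latticeops}, one takes $R = \{\leq\}$). Since endomorphisms are total maps and $\leq$ is a relation, $\M_1$ contains no partial operations, which is precisely the hypothesis under which the quoted form of the Preservation Lemma, and hence Theorem~\ref{thm:latticeops}, is available.

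The only genuine content is then the verification that $\vee$ and $\wedge$ are binary homomorphisms of $\M_1$, i.e.\ homomorphisms $\M_1 \times \M_1 \to \M_1$; note these operations are not part of $\M_1$ itself, only of the auxiliary lattice $\M = \langle M; \vee, \wedge\rangle$, so compatibility with $\M_1$ must be checked. This splits into two routine points. First, for each $f \in F$ we have $f \in \Endo(\M)$, so $f(a \vee b) = f(a) \vee f(b)$ and $f(a \wedge b) = f(a) \wedge f(b)$ for all $a,b \in M$; this says exactly that $\vee$ and $\wedge$ commute with every unary operation of $\M_1$. Second, the hypothesis that the order $\leq$ is preserved by $\vee$ and by $\wedge$ says precisely that the maps $\vee, \wedge \colon \langle M^2; \leq^2\rangle \to \langle M; \leq\rangle$ are order-preserving, i.e.\ that $\vee$ and $\wedge$ preserve the relation $\leq$. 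Together these two observations establish that $\vee$ and $\wedge$ are homomorphisms of $\M_1$, and by hypothesis $\langle M; \vee, \wedge\rangle$ is a lattice.

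With this in hand, Theorem~\ref{thm:latticeops} applies verbatim to $\M_1 = \langle M; F, \leq\rangle$ and yields that $\MT_2 = \langle M; \vee, \wedge, R_{2|M|}, \Tp\rangle$, with $R_{2|M|}$ the set of $2|M|$-ary relations compatible with $\M_1$, dualises $\M_1$ — which is exactly the assertion of Theorem~\ref{thm:latendos}. I expect no real obstacle here: the statement really is an immediate corollary, and the only step that requires any thought — the homomorphism check for $\vee$ and $\wedge$ — unwinds directly from the two standing hypotheses, namely $F \subseteq \Endo(\M)$ and the compatibility of $\leq$ with $\vee$ and $\wedge$. If anything deserves a second glance, it is merely confirming that $\M_1$ has no partial operations, so that the version of the Preservation Lemma invoked in the proof of Theorem~\ref{thm:latticeops} remains in force.
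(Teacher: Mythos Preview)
Your proposal is correct and matches the paper's approach exactly: the paper offers no proof beyond the sentence ``As an immediate corollary we get the following nice result,'' and your write-up simply unpacks that corollary by checking that $\vee$ and $\wedge$ are binary homomorphisms of $\M_1$ so that Theorem~\ref{thm:latticeops} applies. The two verifications you give (commutation with each $f\in F$ via $F\subseteq\Endo(\M)$, and preservation of $\leq$ by hypothesis) are precisely what is needed, and nothing more.
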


\begin{example}
Let $\M_1 = \langle \{0, 1, 2\}; u, d, \leq\rangle$ be
a unary algebra with $u(0) = 1$, $u(1) = u(2) = 2$ and $d(2) = 1$, $d(1) = d(0) = 0$, enriched with either the usual order $0 < 1 < 2$, the uncertainty order $0 < 1 > 2$ of Kleene algebra duality fame or the order whose only proper comparability is $1 < 2$ of Stone algebra duality fame. Since each of these orders is compatible with the $\vee$ and $\wedge$ of the three-element lattice, the Lattice Endomorphism Theorem~\ref{thm:latendos} tells us that the alter ego $\MT_2 \coloneqq  \langle M;\vee, \wedge, R_6, \Tp \rangle$ yields a duality on $\CA = \ISP(\M_1)$.
By Theorem~\ref{multi-pres:single}, the natural extension $n_{\CA} (\A)$ of an ordered algebra
$\A\in \CA$ is simply described as the algebra consisting of all $\{\vee,\wedge\}\cup {R_6}$-preserving maps from $\CA(\A, \M_1)$ to $\M_2$. We have not investigated whether the natural extension will provide a concrete realisation of the zero-dimensional Bohr compactification in this case.
\end{example}

\section{Natural extensions and Bohr compactifications:  making  use of topology-swapping}
\label{sec:egViaDuality}

We consider once again the scenario presented in
Figure~\ref{fig:paired-natext}, retaining the notation from Section~\ref{nat-describe}.
 So assume
that $\M_1 = \langle M; G_1, R_1\rangle$ and $\M_2 = \langle M;  G_2,H_2, R_2\rangle$
are compatible
structures on the finite set $M$, with $\M_1$ total.
We define
\begin{alignat*}{2}
\CA &= \ISP(\M_1), \qquad \qquad & \CX_\Tp &=
\ISOcP(\MT_2), \\
\CA_\Tp
&= \IScPnp(\MT_1),
& \CX &= \ISOP(\M_2). 
\end{alignat*}
We set up the
hom-functors
$\mathrm{D}$, $\mathrm{E}$, $\mathrm{F}$ and $\mathrm{G}$ as before.
Thus we envisage trying to swap the topology from $\M_2$ to $\M_1$.
We seek   conditions under which we can upgrade the statement of  Theorem~\ref{thm:pairedadjunctions} so as to assert that both adjunctions are
dual equivalences. 
  When this occurs we shall say we have \emph{paired  full dualities}.

We shall highlight two theorems which yield paired full dualities.  The first is the TopSwap Theorem for (total) structures.  This was obtained for algebras in \cite[Theorem~2.4]{DHP12}.
We preface its statement with a technical observation.
We follow~\cite{DHP12} in indicating that
it is only necessary that we have
a duality, or full duality, 
at the finite level (that is, on the full subcategory of~$\CA$ consisting of the finite objects).
For the theorem as we shall apply it, we do not make use of the weakened assumptions.  But it would be disingenuous to exclude them:
the core of the proof  in \cite{DHP12},
which applies equally well to total structures, 
 concerns
what happens at the finite level, with the lifting to the whole class
relying solely on categorical generalities.

\begin{TopSwap}\label{thm:paireddualities}
Let $\M_1$ be a finite total structure of finite type, let $\M_2$ be a structure compatible
with~$\M_1$ and define the categories $\CA$, $\CA_\Tp$, $\CX$ and $\CX_\Tp$
as above. 
\begin{newlist}

\item[{\upshape (1)}]
If\/
$\MT_2$
yields a finite-level
duality between $\CA$
and $\CX_\Tp$, then $\M_2$ yields a duality between $\CA_\Tp$ and $\CX$.

\item[{\upshape (2)}]
 If\/ $\MT_2$
 yields a finite-level
 full
duality between $\CA$ and $\CX_\Tp$, then
the adjunction 
$\langle \mathrm{F}, \mathrm{G}, e', \varepsilon'\rangle$ is a dual equivalence between the categories $\CA_\Tp$ and~$\CX$.
\end{newlist}
\end{TopSwap}

Combining the TopSwap Theorem for Structures with
Theorem~\ref{thm:pairedadjunctions}
we obtain natural extensions in partnership in the manner described in the next result. \begin{corollary}\label{descr_of_compatifications}
Let $\M_1$ be a finite total structure of finite type, let $\M_2$ be a
total structure compatible
with~$\M_1$ and define the categories $\CA$, $\CA_\Tp$, $\CX$ and $\CX_\Tp$
as above. Assume that \/ $\MT_2$ yields a full duality
between $\CA$ and $\CX_\Tp$. Then $\M_2$ yields a full duality between
$\CA_\Tp$ and $\CX$ and
\begin{newlist}

\item[{\upshape (1)}]
$\begin{aligned}[t]
n_{\CA}(\A) &= \G{\D{\A}^\flat}, \text{ for all }\A\in \CA \text{ and }\\
 n_{\CX}(\X) &= \D{\G{\X}^\flat}, \text{ for all  }\X\in \CX,
\end{aligned}$\\[.8ex] 
so that Figure~{\upshape\ref{fig:paired-natext}} combines two commuting squares;  
\item[\upshape (2)]
$n_{\CA}(\A)$ consists of all maps $\CA(\A, \M_1) \to M$ that preserve the structure on~$\M_2$, for all $\A\in \CA$;

\item[\upshape (3)]
$n_{\CX}(\X)$ consists of all maps $\CX(\X, \M_2) \to M$ that preserve the structure on~$\M_1$, for all $\X\in \CX$.

\end{newlist}
\end{corollary}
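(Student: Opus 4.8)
The plan is to deduce all the conclusions from a single application of the TopSwap Theorem for Structures (Theorem~\ref{thm:paireddualities}) together with two applications of Theorem~\ref{thm:pairedadjunctions}: one with the given labelling and one with the roles of $\M_1$ and $\M_2$ interchanged. The extra hypothesis that $\M_2$ is \emph{total} --- which the TopSwap Theorem itself does not need --- is precisely what licenses the second of these.

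First, a full duality is a fortiori a finite-level full duality, so the hypothesis that $\MT_2$ yields a full duality between $\CA$ and $\CX_\Tp$ lets us apply part~(2) of the TopSwap Theorem for Structures: $\langle\mathrm F,\mathrm G,e',\varepsilon'\rangle$ is a dual equivalence between $\CA_\Tp$ and $\CX$. In particular $e_\A'$ is an isomorphism for all $\A\in\CA_\Tp$ and $\varepsilon_\X'$ is an isomorphism for all $\X\in\CX$, i.e.\ $\M_2$ yields a full duality between $\CA_\Tp$ and $\CX$; this is the first assertion. Next, since $\MT_2$ yields (in particular) a duality between $\CA$ and $\CX_\Tp$, condition~(1) of Theorem~\ref{thm:pairedadjunctions} holds, so its conditions~(2) and~(3) hold as well; this gives $n_\CA(\A)=\G{\D\A^\flat}$ for all $\A\in\CA$ and the description of $n_\CA(\A)$ as the set of maps $\CA(\A,\M_1)\to M$ preserving the structure on~$\M_2$, which is the first displayed equation of~(1) and all of~(2).

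For the remaining statements I would rerun the argument treating $\M_2$ as the base structure and $\M_1$ as its alter ego. This is legitimate: $\M_2$ is a finite total structure, $\M_1$ is compatible with $\M_2$ by the symmetry of compatibility, and the inessential differences among the operators $\ISP$, $\ISOP$, $\IScPnp$, $\ISOcP$ may be disregarded by Remark~\ref{rem:zero-one}. Under this relabelling the prevariety generated by $\M_2$ is $\CX$, that generated by $\MT_1$ is $\CA_\Tp$, the hom-functor $\mathrm D$ becomes $\mathrm G$, the hom-functor $\mathrm E$ becomes $\mathrm F$, and $n_\CA$ becomes $n_\CX$. The input that condition~(1) of Theorem~\ref{thm:pairedadjunctions} now demands is exactly $\mathrm F\circ\mathrm G\cong\id_\CX$, i.e.\ that $\varepsilon_\X'$ be an isomorphism for all $\X\in\CX$ --- which is the half of the dual equivalence of the first step that a one-sided duality would not supply. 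Hence the reversed forms of conditions~(2) and~(3) give $n_\CX(\X)=\D{\G\X^\flat}$ for all $\X\in\CX$, together with the description of $n_\CX(\X)$ as the set of maps $\CX(\X,\M_2)\to M$ preserving the structure on~$\M_1$: the second displayed equation of~(1) and statement~(3). The two displayed equations of~(1) then say precisely that Figure~\ref{fig:paired-natext} combines two commuting squares.

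The only step needing genuine care is the relabelling in the last paragraph: one must check that interchanging $\M_1$ and $\M_2$ carries $\mathrm D\mapsto\mathrm G$, $\mathrm E\mapsto\mathrm F$ and the four categories to one another as claimed, and --- this is the real point --- that what one feeds into Theorem~\ref{thm:pairedadjunctions} in the reversed direction is the \emph{full} duality delivered by the TopSwap Theorem, not merely one half of it. Everything else is bookkeeping already performed in Theorems~\ref{thm:pairedadjunctions} and~\ref{thm:paireddualities}.
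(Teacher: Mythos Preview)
Your proof is correct and is exactly the argument the paper intends: the text preceding the corollary says only ``Combining the TopSwap Theorem for Structures with Theorem~\ref{thm:pairedadjunctions} we obtain \dots'', and you have spelled out precisely that combination---one pass of the TopSwap Theorem followed by Theorem~\ref{thm:pairedadjunctions} applied in both directions, with the totality of~$\M_2$ used to legitimise the swapped application. Your care over which half of the dual equivalence feeds the reversed Theorem~\ref{thm:pairedadjunctions}, and your invocation of Remark~\ref{rem:zero-one} to handle the class-operator mismatches, are the only points requiring attention and you have handled both.
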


We warn that the requirement that $\M_2$ be a total structure means that topology-swapping cannot be applied to obtain paired natural extensions
 in circumstances where partial operations have to be included
in a  dualising alter ego $\MT_2$ for $\M_1$
 in order to upgrade a duality to a full (in fact, a strong) duality (see
\cite[Chapter~3]{NDftWA}), 
or where partial operations are present
in a dualising alter ego (as happens, for example for dualisable commutative
rings \cite{CISSW}
and for dualisable non-abelian groups~\cite{QS}).

We have seen that the natural extension  provides  a common framework
for a range of universal constructions on algebras and purely
relational structures, so indicating that these do not relate to  quite different worlds.
 But Corollary~\ref{descr_of_compatifications} gives us  more.
Not only does $\M_2$ yield a description of natural extensions in~$\CA$, but it also yields a duality on the category $\CA_\Tp$ within which these natural extensions live. Moreover a corresponding statement also holds for~$\M_1$ and~$\CX$.

We now  present two examples of paired full dualities between categories of total structures. These examples show that certain famous compactifications arise as paired natural extensions and that useful information stems from  the linkage. Here the natural extension functor is, or  generalises,
a Stone--\v {C}ech compactification functor. In this setting,
the functor is, or may be, \emph{defined} as in
Definition
\ref{def:natext} above.

\begin{example}[Boolean algebras and the Stone--\v {C}ech compactification]
\label{ex:Bool}

Here we have a well-known classic. It arises from Stone duality between Boolean algebras and Boolean spaces and its topology-swapped counterpart, the duality between  sets and  Boolean-topological Boolean algebras.
The categories involved are
 \begin{alignat*}{4}
    \CB &= \ISP(\twoBf)  &~~&\text{[Boolean algebras]},  &\qquad
 \CZ_\Tp &= \ISOcPnp(\twoT_\Tp) &~~&\text{[Boolean spaces]},\\
\CB_\Tp &= \ISP(\twoBf_\Tp) &&\text{[Boolean topological BAs]}, &
\CZ &= \ISOcPnp(\twoT) &&\text{[Sets]},
\end{alignat*}
where the generating objects are the  unique two-element structures, with or without topology as appropriate, in the categories concerned. Theorem~\ref{thm:pairedadjunctions} asserts that
the functor $n_{\CB}$ sends a Boolean algebra to the powerset of its dual space. For any set~$Z$, the Stone-\v {C}ech compactification $\beta Z$
is well known to be zero-dimensional, and  so is a member of
$\CZ_\Tp$. Therefore by Proposition~\ref{prop:coincide} the Stone-\v {C}ech compactification {\it alias} the (zero-dimensional) Bohr compactification
coincides with the natural extension. Thus, we have
\[
\beta Z = b(Z) = b_0(Z) = n_{\CZ}(Z).
\]

Corollary~\ref{descr_of_compatifications}
now tells us that $\beta Z$
 is the dual space of the powerset Boolean
algebra $\powerset Z$. Thus we may, should we so
choose,  regard our construction here as a way to obtain the dual space of a powerset algebra.  The relationship between the dual of a powerset algebra
and the Stone--\v {C}ech compactification is of course very well known
and can be obtained by a variety of methods different from ours; see for example
\cite[Section~8.3]{HBA}, \cite[pp.~230--232]{Eng}
or 
\cite[16.2.5]{Sem}.
\end{example}

We now consider the natural extension on the category $\CP$ of ordered
sets.
We may regard $n_{\CP}$  as
a compactification functor on~$\CP$, paralleling that of the Stone--\v{C}ech compactification functor on sets.
Compactifications of ordered sets, and more generally of ordered topological spaces, has been extensively studied, following the introduction by Nachbin of the order-compactification which now bears his name \cite{N76}.
This construction provides  a reflection of $\CP$ into the category of compact ordered spaces.
Let $\PP=\langle Y; \leq\rangle$ belong to $\CP$.
Then the topological structure 
$ \langle Y; \leq,\Tp\rangle$
  is \emph{compact ordered} (an alternative term is
\emph{compact order-Hausdorff}) if $\Tp$ is compact and $\leq$ is closed in 
$Y \times Y$,
from which it follows that the topology is Hausdorff. Thus
the Bohr compactification
$b(\PP)$ coincides with the Nachbin
order-compactification
of~$\PP$, which we shall denote by
$\Nach \PP$.
But more is true.  In \cite{BeMo}, Bezhanishvili and Morandi study what they call \emph{Priestley order-compactifications} for a suitable class of ordered spaces,
which includes those that are discretely topologised.
Crucially for our purposes
they
 demonstrate that
$\Nach \PP$ is a Priestley space for any $\PP \in \CP$
\cite[Corollary~4.7]{BeMo}; this result was
 also proved, by a different method, by Nailana \cite{Nai}.
As a consequence, Proposition~\ref{prop:coincide} now provides the following 
noteworthy result.
The topological prevariety $\IScP(\twoT) $
of Priestley spaces is non-standard; recall Example~\ref{ex:Stralka}.
Nevertheless
weak coincidence does occur.

\begin{proposition}  \label{NachisNat}
For any ordered set~$\PP$,
\[
\Nach \PP = b(\PP)
= b_0(\PP) = n_{\CP}(\PP).
\]
\end{proposition}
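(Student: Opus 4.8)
The plan is to deduce everything from Proposition~\ref{prop:coincide}, specifically part~(iii)(a) and~(iii)(b), together with the two facts about the Nachbin order-compactification recalled just before the statement: that $b(\PP)$ coincides with $\Nach\PP$ (since compact ordered spaces are exactly the objects of $\CP^{\mathrm{ct}}$, and $\iota_\PP(Y)$ is automatically dense in $\Nach\PP$ because there are no partial operations), and that $\Nach\PP$ is a Priestley space for every ordered set $\PP$ by~\cite[Corollary~4.7]{BeMo}.

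\smallskip
\emph{First step.} Identify $b(\PP)$ with $\Nach\PP$. Since $\CP = \ISOP(\twoT)$ is a prevariety of (purely relational) structures with $H=\varnothing$, Definition~\ref{def:Bohr} says $b(\PP)$ is the unique compact Hausdorff topological ordered set into which $\PP$ embeds densely and which has the appropriate universal mapping property. A compact Hausdorff topological structure of signature $(\varnothing,\varnothing,\{\leq\})$ is precisely a compact ordered space, so $\CP^{\mathrm{ct}}$ is the category of compact ordered spaces and the universal property defining $b(\PP)$ is exactly the one defining the Nachbin order-compactification $\Nach\PP$. Hence $b(\PP) = \Nach\PP$.

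\smallskip
\emph{Second step.} Show $b(\PP) = b_0(\PP)$. By~\cite[Corollary~4.7]{BeMo}, $\Nach\PP$ is a Priestley space, hence in particular a Boolean-topological ordered set, i.e.\ $b(\PP)\in\CP^{\mathrm{Bt}}$. Proposition~\ref{prop:coincide}(iii)(b) then gives $b(\PP)=b_0(\PP)$ directly; alternatively one can cite the remark in Section~\ref{natBohr} that $b(\PP)$ satisfies the universal property characterising $b_0(\PP)$ once it happens to be Boolean-topological.

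\smallskip
\emph{Third step.} Show $b_0(\PP) = n_{\CP}(\PP)$. The category $\CP_\Tp = \ISOcPnp(\twoT_\Tp)$ is precisely the category of Priestley spaces. Since $b_0(\PP)=\Nach\PP$ is a Priestley space, we have $b_0(\PP)\in\CP_\Tp$, and Proposition~\ref{prop:coincide}(iii)(a) then yields $b_0(\PP)=n_{\CP}(\PP)$. Stringing the three equalities together gives
\[
\Nach\PP = b(\PP) = b_0(\PP) = n_{\CP}(\PP),
\]
as required.

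\smallskip
There is essentially no obstacle here once one has the external input~\cite[Corollary~4.7]{BeMo} (equivalently Nailana~\cite{Nai}): the entire content is the observation that $\Nach\PP$ lands inside the smaller categories $\CP^{\mathrm{Bt}}$ and $\CP_\Tp$, after which Proposition~\ref{prop:coincide} does all the work. The only point requiring a word of care is the identification of $b(\PP)$ with $\Nach\PP$ itself: one must check that the universal property in Definition~\ref{def:Bohr}, restricted to compact ordered spaces and with density replacing the closed-substructure-generation condition (legitimate because $H=\varnothing$), is literally Nachbin's defining property of the order-compactification. This is routine, and the note after Definition~\ref{def:Bohr} already records that the two formulations of the universal mapping property agree.
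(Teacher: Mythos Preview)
Your proof is correct and follows exactly the approach the paper takes: the paper simply states that the result is a consequence of Proposition~\ref{prop:coincide} together with the fact (from~\cite[Corollary~4.7]{BeMo} or~\cite{Nai}) that $\Nach\PP$ is always a Priestley space, and you have spelled out precisely how Proposition~\ref{prop:coincide}(iii)(a) and~(iii)(b) are applied.
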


We now discuss the paired full dualities between $\CP_\Tp$ (Priestley spaces)
and $\CCD$ (bounded distributive lattices) and the ramifications of this partnership.

\begin{example}[Bounded distributive lattices and  the  Nachbin
order-compactification]\label{ex:ord comp}

Here we build on the partnership between
Priestley duality for bounded distributive lattices and the duality, due to Banaschewski~\cite{Ban76},
between ordered sets and Boolean-topological bounded distributive lattices.
Accounts of this partnership are given in 
\cite[Example~4.1]{DHP12} and \cite[Section~4]{pig}. 

The categories involved are
\begin{alignat*}{4}
    \CCD  &= \ISP(\twoBf)    &~~&\text{[bounded DLs]},   &\qquad \CP_\Tp & = \ISOcPnp(\twoT_\Tp)       &~~&\text{[Priestley spaces]},\\ 
\CCD_\Tp &=\IScPnp(\twoBf_\Tp) &&\text{[Bt bounded DLs]},  &
\CP &= \ISOP(\twoT) &&\text{[ordered sets]};
\end{alignat*}
once again the generators in the four cases are given by two-element objects in the categories concerned.
Corollary~\ref{descr_of_compatifications}
tells us
that the natural extension $n_{\CCD} (\Lalg)$ of a bounded distributive lattice $\Lalg$ is the Boolean-topological lattice consisting of all order-preserving maps from $\CCD(\Lalg,\twoBf)$ to
$\twoT$. Moreover,  for any  $\PP\in \CP$
the set of elements of $n_{\CP}(\PP)$
consists of all lattice homomorphisms from
$\CP(\PP,\twoT)$
to $\twoBf$.

There is more to be said about the Nachbin order-compactification, {\it alias} Bohr compactification, in relation to duality.
Drawing on Corollary~\ref{descr_of_compatifications},
we see that
for any $\PP \in \CP$  we have
 $n_{\CP}(\PP) =   \mathrm D(\mathrm G (\PP)^\flat)$
and for any
$\Lalg\in \CCD$ we have $n_{\CCD}(\Lalg) = \mathrm G(\mathrm D(\Lalg)^\flat)$.
The first of these gives immediately that, for any ordered set~$\PP$,  the Priestley dual space of an up-set
lattice $\mathcal U(\PP)$ is the Nachbin order  compactification  $\Nach \PP$.

 It is well known, and has been proved in various  ways (see
\cite{BGMM,nisp,DP12}),  that for a
bounded distributive lattice $\Lalg$ we have
$n_{\CCD}(\Lalg) \cong \pro_{\CCD}(\Lalg) \cong \Lalg^\delta$, where $\pro_{\CCD}(\Lalg)$ and $\Lalg^\delta$ denote respectively the profinite completion and the canonical extension of~$\Lalg$.   It follows that the Priestley dual space of $\Lalg^\delta$ is
$\Nach{\mathrm D(\Lalg)^\flat}$.
This recaptures
\cite[Corollary 5.4]{BeMo} (see also \cite[Proposition 3.4]{BGMM}). Our proof is different from that in \cite{BeMo} and separates the component parts of the argument in a transparent way.
\end{example}

The piggyback technique is a time-honoured way to find amenable natural dualities for prevarieties of algebras having reducts in dualisable prevarieties, in particular, in $\CCD$ or $\CS$  (see \cite[Chapter~7]{NDftWA}).
This technique has been extended and refined in~\cite{pig}
so as to apply to CT-prevarieties.  Moreover under a variety of conditions, albeit stringent, the piggyback technique can be used directly to yield
paired full dualities: see the 
Two-for-one Piggyback Strong Duality Theorem, \cite[Theorem~3.8]{pig}.   This result differs from
Theorem~\ref{thm:paireddualities} in several respects.
As the name implies, it produces paired dualities
both of which are strong (strongness, as opposed to fullness, is not relevant in  this paper but  is important in other contexts). The theorem  is not {\it a priori}
restricted to the finitely generated case, and even there
it bypasses the finite type restriction of the TopSwap Theorem.
  When applicable, the specialisation to $\CCD$-based
prevarieties,
\cite[Theorem~4.5]{pig},
yields paired dualities very tightly tied to the paired dualities
for the base categories  $\CCD$ and~$\CP$
discussed in Example~\ref{ex:ord comp}.  We mention one particular
application, to the variety $\CO$ of Ockham algebras
\cite[Theorem~4.6]{pig}. 
Here there are mutually compatible structures on the infinite set 
$M=\{0,1\}^{\mathbb N_0}$, with  $\M_1$ generating~$\CO$ and $\M_2$ carrying an order and an operation
which is order-reversing with respect to the order.

Over more than 30~years  the variety of
Ockham algebras and its  finitely generated subvarieties have provided a rich source of  examples which have been influential in driving forward
the general theory of natural dualities.  Below
we consider paired full dualities for certain very special  prevarieties of~$\CO$ (in fact they are varieties, as we confirm in Remark~\ref{rem:ock}).
Moving to the dual side
 we  shall identify  an infinite family  of
classes $\CX$ which exhibit the same  behaviour for compactifications as does our hitherto isolated example $\CP$: 
 weak coincidence of $n_{\CX}$, $b_0$ and~$b$,
with, 
Stralka-fashion, the associated topological prevariety non-standard.
Our purpose is to demonstrate that such behaviour is not a rare phenomenon rather than to investigate Ockham algebra varieties {\it per se}
and we shall accordingly not attempt to make our account self-contained,
referring any interested reader to \cite{NDftWA,DNP}, 
and references therein,  for background.   In particular we shall make use of the well-known restricted Priestley duality for Ockham algebras and its subvarieties; see for example \cite{Gold81} and \cite[Section~7.4]{NDftWA}.
The varieties we shall consider are covered by \cite[Theorem 4.6]{pig}
but we shall sidestep this.
The dualities can be found in~\cite[Section~3]{DP87} or in~\cite{DNP}; 
the latter provides the axiomatisations of the dual categories which we shall crucially need.

In preparation for Theorem~\ref{thm:pigcoincide} we present some facts about prevarieties and topological prevarieties  whose objects have reducts in~$\CP$ and~$\CP_\Tp$, respectively.

\begin{lemma}  \label{lem:Nach-props} 

\begin{newlist}
 \item[\upshape(i)]
Let $\PP$ be an ordered set.  Then $\Nach {\PP^\partial}= \Nach \PP^\partial$,
where $\PP^\partial$ denotes the order-theoretic dual of~$\PP$.
\item[\upshape(ii)] 
Let $\PP_1,\PP_2$ be ordered sets and $f \colon \PP_1 \to \PP_2$ be a map which
is order-preserving \textup(respectively, order-reversing\textup).  Then $f$ 
has an extension to a map
$\overline{f} \colon \Nach {\PP_1} \to \Nach {\PP_2}$ which is continuous and order-preserving
\textup(respectively,
order-reversing\textup).
\end{newlist} 
\end{lemma}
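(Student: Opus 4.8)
The plan is to deduce both parts from the universal mapping property of~$b$, via the identification $\Nach{\PP}=b(\PP)=b_0(\PP)=n_{\CP}(\PP)$ of Proposition~\ref{NachisNat} together with the two facts recorded above it: that $\Nach{\PP}$ is always a Priestley space and that the canonical map $\iota_\PP\colon\PP\to\Nach{\PP}$ is an order-embedding with dense image (here $H=\varnothing$, so density is all that is required in Definition~\ref{def:Bohr}). First I would note the trivial observation that the order-dual $\X^\partial$ of a Priestley space $\X$ is again a Priestley space---reversing a topologically closed order leaves it closed in the square, and zero-dimensionality is unaffected---and likewise that $\X\in\CP^{\mathrm{ct}}$ implies $\X^\partial\in\CP^{\mathrm{ct}}$.

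For (i), I would verify that $\Nach{\PP}^\partial$, together with $\iota_\PP$ now regarded as a map $\PP^\partial\to\Nach{\PP}^\partial$, satisfies the universal property characterising $b(\PP^\partial)=\Nach{\PP^\partial}$. That map is still an order-embedding with dense image, since order-reversal changes neither the topology nor the embedding property. Given a Priestley space $\Z$ and an order-preserving map $g\colon\PP^\partial\to\Z$, I would read $g$ instead as an order-preserving map $\PP\to\Z^\partial$; since $\Z^\partial$ is a Priestley space, the universal property of $\Nach{\PP}$ (Definition~\ref{def:Bohr}) yields a unique continuous order-preserving $h\colon\Nach{\PP}\to\Z^\partial$ with $h\circ\iota_\PP=g$, and this is precisely a continuous order-preserving $h\colon\Nach{\PP}^\partial\to\Z$ with $h\circ\iota_\PP=g$; uniqueness transfers back and forth in the same way. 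By uniqueness of the Bohr compactification this gives $\Nach{\PP^\partial}=\Nach{\PP}^\partial$.

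For (ii), the order-preserving case is immediate: $\iota_{\PP_2}\circ f\colon\PP_1\to\Nach{\PP_2}$ is an order-preserving map into a Priestley space, so the universal property of $\Nach{\PP_1}=b(\PP_1)$ supplies the required (unique) continuous order-preserving extension $\overline f\colon\Nach{\PP_1}\to\Nach{\PP_2}$. For the order-reversing case I would reduce to the former via (i): an order-reversing $f\colon\PP_1\to\PP_2$ is the same thing as an order-preserving map $\PP_1\to\PP_2^\partial$, which extends to a continuous order-preserving map $\Nach{\PP_1}\to\Nach{\PP_2^\partial}$, and, since $\Nach{\PP_2^\partial}=\Nach{\PP_2}^\partial$ by~(i), this is exactly a continuous order-reversing map $\Nach{\PP_1}\to\Nach{\PP_2}$ extending~$f$. (Equivalently, compose $f$ with the order-embedding $\PP_2^\partial\hookrightarrow\Nach{\PP_2}^\partial$ and apply the universal property once.)

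No genuine obstacle arises; the only point needing a little care is the bookkeeping around ``up to isomorphism'' in~(i). As throughout the paper, the Bohr compactification is determined only up to a $\CP^{\mathrm{ct}}$-isomorphism, so the displayed equality is to be read as identification along the canonical isomorphism furnished by the matching universal properties. Should a more concrete statement be wanted, one may instead invoke the description of $n_{\CP}(\PP)$ from Example~\ref{ex:ord comp} as the Priestley dual of the up-set lattice $\mathcal U(\PP)$, combined with the order anti-isomorphism $A\mapsto\PP\setminus A$ between $\mathcal U(\PP)$ and the lattice of down-sets of $\PP$; but the universal-property route is shorter and self-contained.
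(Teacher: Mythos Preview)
Your argument is correct, and for part~(ii) it is essentially the paper's own proof (functoriality of $b$ for the order-preserving case, reduction via~(i) for the order-reversing case). For part~(i), however, you take a genuinely different route. The paper argues via the duality-theoretic description: $\Nach{\PP}$ is the Priestley dual of the up-set lattice $\mathcal U(\PP)$, complementation gives an anti-isomorphism $\mathcal U(\PP^\partial)\cong\mathcal U(\PP)^\partial$, and one invokes the standard fact that $\mathrm D(\Lalg^\partial)\cong\mathrm D(\Lalg)^\partial$ for Priestley duality. You instead verify directly that $\Nach{\PP}^\partial$ satisfies the universal property characterising $\Nach{\PP^\partial}$, using only that order-dualising preserves $\CP^{\mathrm{ct}}$. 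Your approach is more self-contained and avoids appealing to facts about Priestley duality; the paper's approach, in contrast, makes the isomorphism concrete. Amusingly, you mention the paper's method as the alternative you chose not to pursue. One small point: when you write ``Given a Priestley space $\Z$'', you should really take $\Z\in\CP^{\mathrm{ct}}$ if you are verifying the universal property for $b(\PP^\partial)$; your argument goes through verbatim in that generality since you already observed that $\CP^{\mathrm{ct}}$ is closed under order-duals. (Alternatively, as written you are verifying the universal property for $n_{\CP}(\PP^\partial)$, which suffices by Proposition~\ref{NachisNat}.)
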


\begin{proof}
Consider (i).
Let $\PP$ be an ordered set.  From above, $\Nach \PP$ is the Priestley dual space of the up-set lattice $\mathcal U(\PP)$.  Likewise, $\Nach {\PP^\partial}$ is the dual space of $\mathcal U(\PP^\partial)$, which is order 
anti-isomorphic 
to $\mathcal U(\PP)$, via the complementation map.  Now use the well-known fact about Priestley duality that, 
for any $\Lalg \in \CCD$, we have $\mathrm D (\Lalg^\partial)$ 
homeomorphic and order anti-isomorphic to $\mathrm D(\Lalg)$.  
Putting all this together we obtain
$(\Nach {\PP^\partial})^\partial = \Nach \PP$  (up to order homeomorphism
in $\CP_\Tp$). By uniqueness, and then  flipping the order,  we deduce that
$\Nach {\PP^\partial} = (\Nach \PP)^\partial$.

 The proof of (ii) is
 an almost immediate consequence of the fact that $\beta_\leq$ is a functor, with use being made of (i) when
$f$ is order-reversing.
\end{proof}

In the proof of the following lemma the claim concerning the lifting of atomic formulas holds quite generally for total structures. For the application we shall make of the lemma, the proof of the claim is entirely elementary as the atomic formulas have a very simple form: $g(x_1) \leq h(x_2)$ or $g(x_1) = h(x_2)$, where $x_1, x_2 \in \{x, y\}$ with $g$ and $h$ in the monoid of self maps of $M$ generated by $F$.

\begin{lemma}\label{lem:Cornish}
Let $\M =\langle M; F, \leq\rangle$ where $\langle M; \leq\rangle$ is a finite ordered set that is not an antichain and $F$ is a set of unary operations on $M$ each of which is order-preserving or order-reversing, and define $\CX \coloneqq \ISOP(\M)$ and $\CX_\Tp\coloneqq \ISOcPnp(\M_\Tp)$. Assume that there is a set $\Sigma$ of atomic formulas in the language of $\M$ such that a structure with topology $\X = \langle X; F, \leq, \Tp\rangle$, with the same signature as $\M$, belongs to $\CX_\Tp$ if and only if
\begin{newlist}
 \item[\upshape(i)]
$\langle X; \leq, \Tp\rangle$ is a Priestley space,

\item[\upshape(ii)] 
for all $f\in F$, if $f$ is order-preserving \textup(respectively, order-reversing\textup) on~$\M$, then $f$ is continuous and order-preserving \textup(respectively, 
order-reversing\textup) on~$\X$,

\item[\upshape(iii)] 
$\X$ satisfies $\sigma$, for all $\sigma\in \Sigma$.

\end{newlist} 
Then $b(\X) = b_0(\X) = n_{\CX}(\X)$, for all $\X\in \CX$.

\end{lemma}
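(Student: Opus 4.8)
The plan is to reduce the statement, via Proposition~\ref{prop:coincide}(iii), to the single claim that $b(\X)\in\CX_\Tp$. Granting this, $\CX_\Tp\subseteq\CX^{\mathrm{Bt}}$ (the topology on the finite set $M$ being discrete, hence Boolean) gives $b(\X)\in\CX^{\mathrm{Bt}}$, so part~(iii)(b) yields $b(\X)=b_0(\X)$, and since this common value lies in $\CX_\Tp$, part~(iii)(a) yields $b_0(\X)=n_{\CX}(\X)$. Next I would note that, because $b(\X)\in\CX^{\mathrm{ct}}$, its non-topological reduct lies in $\CX=\ISOP(\M)$; consequently conditions (ii) and (iii) of the hypothesised description of $\CX_\Tp$ hold automatically for $b(\X)$ --- the operations in $F$ are continuous since $b(\X)$ is a compact topological structure; each $f\in F$ is order-preserving or order-reversing on $b(\X)$ as this is inherited from $\M$ through $\ISOP$; and every $\sigma\in\Sigma$ holds on $b(\X)$ because $\M_\Tp\in\CX_\Tp$ forces $\M\models\Sigma$, whence $\CX\models\Sigma$. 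So everything comes down to verifying condition~(i): that the compact ordered space $\mathbf K$ underlying $b(\X)$ (the reduct obtained by forgetting the operations of $F$) is a Priestley space. (The hypothesis that $\langle M;\leq\rangle$ is not an antichain is not invoked in what follows, but it is needed for the assumed characterisation of $\CX_\Tp$ to be non-vacuous.)

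To this end, let $\PP\coloneqq\langle X;\leq\rangle$ be the order-reduct of $\X$ and let $\eta\colon\PP\to\Nach\PP$ be the canonical dense order-embedding into the Nachbin order-compactification, which is a Priestley space by Proposition~\ref{NachisNat} and the discussion preceding it. For each $f\in F$, the restriction of $f$ to $\PP$ is an order-preserving or order-reversing self-map of $\PP$; by Lemma~\ref{lem:Nach-props}(ii) it extends uniquely to a continuous self-map $\bar f$ of $\Nach\PP$, order-preserving or order-reversing accordingly, with $\bar f\circ\eta=\eta\circ f$. Equip $\Nach\PP$ with the operations $\{\bar f\mid f\in F\}$, together with its order and topology, to form a topological structure $\widehat\X$ of the appropriate signature. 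Then $\widehat\X\in\CX_\Tp$: condition~(i) holds since $\Nach\PP$ is Priestley, condition~(ii) by the choice of the $\bar f$, and condition~(iii) because $\widehat\X$ restricts along $\eta$ to exactly $\X$ (as $\bar f\circ\eta=\eta\circ f$ and $\eta$ both preserves and reflects $\leq$), so each $\sigma\in\Sigma$, being atomic and hence defining a closed subset of the relevant finite power of $\widehat\X$ (the order and equality are closed, the $\bar f$ continuous), holds on the dense set $\eta(X)$ and therefore on all of $\widehat\X$.

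Now $\eta$ is an $\CX$-homomorphism $\X\to\widehat\X^\flat$, so the universal property of $b(\X)$ (Definition~\ref{def:Bohr}) yields a continuous homomorphism $h\colon b(\X)\to\widehat\X$ with $h\circ\iota_\X=\eta$; write $h'\colon\mathbf K\to\Nach\PP$ for the continuous order-preserving map underlying $h$, so $h'\circ\iota_\X=\eta$. Conversely $\mathbf K$ is a compact ordered space and $\iota_\X$, read on underlying ordered spaces, is an order-preserving map $\PP\to\mathbf K$, so the universal property of the Nachbin order-compactification supplies a continuous order-preserving $\psi\colon\Nach\PP\to\mathbf K$ with $\psi\circ\eta=\iota_\X$. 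Then $h'\circ\psi$ fixes the dense set $\eta(X)$ pointwise, hence equals $\id_{\Nach\PP}$, so $\psi$ is injective; and $\psi$ is onto since its image is compact, hence closed, and contains the dense set $\iota_\X(X)$. A continuous bijection between compact Hausdorff spaces is a homeomorphism, and its inverse, namely $h'$, is order-preserving, so $\psi$ is an isomorphism of compact ordered spaces; therefore $\mathbf K\cong\Nach\PP$ is a Priestley space, which completes the argument. The main obstacle is precisely this identification of $\mathbf K$ with $\Nach\PP$: one has to build $\widehat\X$ so that it genuinely lands in $\CX_\Tp$ --- where Lemma~\ref{lem:Nach-props}(ii) and the closedness of the atomic conditions of $\Sigma$ on a compact topological structure are the crucial ingredients --- and then dovetail the two universal properties.
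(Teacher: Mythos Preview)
Your proof is correct and follows essentially the same route as the paper's. Both arguments construct the enriched Nachbin compactification $\widehat\X$ on $\Nach\PP$, verify via Lemma~\ref{lem:Nach-props}(ii) and the density/closedness argument that $\widehat\X\in\CX_\Tp$, and then invoke Proposition~\ref{prop:coincide}. The only difference is in bookkeeping: the paper shows directly that $\widehat\X$ satisfies the universal property of $b(\X)$ (lifting an arbitrary $g\colon\X\to\Z^\flat$ through the Nachbin property and checking by density that the lift respects each $f\in F$), whereas you take $b(\X)$ as given abstractly, observe that conditions~(ii) and~(iii) hold automatically since $b(\X)^\flat\in\CX$, and then compare the compact ordered reduct $\mathbf K$ with $\Nach\PP$ by playing the two universal properties against each other. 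Your decomposition is slightly more modular --- isolating the Priestley condition as the only non-trivial point --- but neither approach is materially shorter, and both hinge on exactly the same ingredients.
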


\begin{proof}
Let $\X \in \CX$ and form the Nachbin order compactification
$\Y\coloneqq  \Nach {\X^{\triangledown}}$, where $^\triangledown$ forgets the
maps in $F$.  This is a Priestley space.   In addition, by Lemma~\ref{lem:Nach-props}, each order-preserving (respectively, order-reversing) map $f \in F$ on $\X$ has a unique lifting to a continuous order-preserving (respectively, order-reversing) map, which we  also denote by~$f$, on $\Y$.  It is an easy exercise to show that 
any atomic formula holding on $\X$  
also holds on $\Y$ (since $\iota_{\X^\triangledown}(\X^\triangledown)$ is dense in $\Y$, each $g\in F$ is continuous and $\leq$ is closed).
We have shown that $\Y$ enriched with $f$, for each $f\in F$, satisfies (i)--(iii) and so belongs to $\CX_\Tp$, by our assumptions. We claim that this topological structure serves as $b(\X)$.

Let $\Z \in \CX^{\text{ct}}$ and take a homomorphism $g\colon \X \to \Z^\flat$.
Then $g^\triangledown \colon \X^\triangledown \to 
(\Z^\flat)^\triangledown$ lifts uniquely to a Priestley space morphism $h$ from $\Nach {\X^\triangledown} = b(\X^\triangledown)$ 
to $\Z^\triangledown$. Since $h$ commutes with each $f\in F$ when restricted to a dense subset, continuity guarantees that $h$ commutes with each $f\in F$ on $\Nach {\X^\triangledown}$. This yields the universal property demanded of~$b(\X)$.
By Proposition~\ref{prop:coincide}(ii), since
$b(\X)\in \CX_\Tp$ and $\CX_\Tp\subseteq \CX^{\mathrm{Bt}}$, we conclude that 
$b(\X) = b_0(\X) = n_{\CX}(\X)$.
\end{proof}

The topological prevarieties
$\CX_\Tp$ we shall consider in Theorem~\ref{thm:pigcoincide} are non-standard.
This follows from a very general, but unpublished, result concerning ordered unary algebras~\cite{BCDP}.  Therefore Theorem~\ref{thm:NatIsBohr} does not apply and hence we
need instead to exploit  Lemma~\ref{lem:Cornish} in order to  prove  our theorem.

\begin{theorem} \label{thm:pigcoincide}
There exists a countably infinite family $\mathcal  M$ of finite
pairwise non-isomorphic subdirectly irreducible Ockham algebras
such that each $\M_1 \in \mathcal  M$ has  the following properties:
\begin{newlist}
\item[\upshape (1)]  there exists a total structure $\M_2 = \langle M; u, \preccurlyeq\rangle $, with $\preccurlyeq$ an order on~$M$  and $u\colon M \to M$ an order-reversing map, such that $\M_2$ is compatible with $\M_1$ and 
    $\MT_2$ yields a full duality between $\CA \coloneqq \ISP(\M_1)$ and $\ISOcPnp(\MT_2)$; 
    
\item[\upshape (2)]
$b(\X) = b_0(\X) = n_{\CX}(\X)$, for all
$\X$ in $\CX \coloneqq  \ISOP(\M_2)$.
\end{newlist}
\end{theorem}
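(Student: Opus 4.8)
The plan is to obtain the family $\mathcal{M}$ from the known natural dualities for finitely generated varieties of Ockham algebras, to secure part~(1) as essentially a citation, and to derive part~(2) from Lemma~\ref{lem:Cornish}.

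For the family I would take a countably infinite list of pairwise non-isomorphic finite subdirectly irreducible Ockham algebras $\M_1$, each non-Boolean and each generating a variety $\CA = \ISP(\M_1) = \HSP(\M_1)$, chosen from among those for which the restricted Priestley duality upgrades to a full (indeed strong) natural duality whose dualising alter ego has the shape $\MT_2 = \langle M; u, \preccurlyeq, \Tp\rangle$: here $\preccurlyeq$ is an order on $M$, the map $u$ is order-reversing on $\langle M; \preccurlyeq\rangle$, and $\Tp$ is discrete. The relevant dualities, and explicit axiomatisations of the dual categories, are recorded in \cite[Section~3]{DP87} and in \cite{DNP}; that infinitely many such generators exist, pairwise non-isomorphic, is read off from the restricted Priestley duality for Ockham algebras and its finitely generated subvarieties (see \cite{Gold81} and \cite[Section~7.4]{NDftWA}). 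Since the Boolean algebras are excluded, each $\langle M; \preccurlyeq\rangle$ fails to be an antichain. With this choice, $\M_2 = \langle M; u, \preccurlyeq\rangle$ is a total structure compatible with $\M_1$ in the sense of Section~\ref{nat-describe}, and the cited theorems say precisely that $\MT_2$ yields a full duality between $\CA = \ISP(\M_1)$ and $\ISOcPnp(\MT_2)$; this is~(1). Recall that, as noted just before the statement, the associated topological prevariety $\CX_\Tp$ is non-standard by \cite{BCDP}, so that Theorem~\ref{thm:NatIsBohr} is inapplicable and part~(2) genuinely requires a separate argument.

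For~(2) I would apply Lemma~\ref{lem:Cornish} with $\M \coloneqq \M_2$, $F \coloneqq \{u\}$, and $\leq$ taken to be $\preccurlyeq$. Its hypotheses are met: $\langle M; \preccurlyeq\rangle$ is finite and, as observed, not an antichain; $u$ is order-reversing; and the axiomatisation of $\CX_\Tp = \ISOcPnp(\MT_2)$ supplied by \cite{DNP} is exactly of the form Lemma~\ref{lem:Cornish} requires --- a structure with topology $\X = \langle X; u, \leq, \Tp\rangle$ belongs to $\CX_\Tp$ if and only if $\langle X; \leq, \Tp\rangle$ is a Priestley space, $u$ is continuous and order-reversing on~$\X$, and $\X$ satisfies a fixed finite set $\Sigma$ of atomic formulas in the single variable, namely the identities and inequalities between iterates of $u$ that record the finite period of $u$ on $\M_2$ together with any comparabilities $u^i(x) \leq u^j(x)$ valid in $\M_2$. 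Lemma~\ref{lem:Cornish} then yields $b(\X) = b_0(\X) = n_{\CX}(\X)$ for all $\X \in \CX = \ISOP(\M_2)$, which is~(2).

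The crux of the proof --- and the place where real work is needed rather than citation --- is the verification that the dual-category descriptions in \cite{DNP} genuinely have the ``Priestley space $+$ continuous order-reversing $u$ $+$ finitely many atomic formulas'' form demanded by Lemma~\ref{lem:Cornish}, with no quasi-atomic or existentially quantified conditions required to cut $\CX_\Tp$ out of the class of all Priestley spaces equipped with a continuous order-reversing unary operation; and, in parallel, the production of an explicit countably infinite family of finite subdirectly irreducible Ockham algebras that are simultaneously non-Boolean, generate varieties, and carry full natural dualities with an alter ego of the stated shape. Granted these facts, the remainder is assembly: part~(1) is the cited duality and part~(2) is an application of Lemma~\ref{lem:Cornish}.
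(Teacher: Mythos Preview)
Your plan is the paper's plan: exhibit an infinite family of finite subdirectly irreducible Ockham algebras with alter egos of the form $\langle M; u, \preccurlyeq\rangle$, cite \cite{DP87,DNP} for~(1), and deduce~(2) from Lemma~\ref{lem:Cornish}. So the architecture is correct.

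Where you diverge from the paper is in specificity, and this matters. The paper does not take an unspecified infinite family; it takes the algebras $\mathbf S_m$ (for $m$ odd) whose Ockham-space duals are the $\mathbb D_m$, precisely because \cite[Theorem~5.7]{DNP} gives an axiomatisation of $\CX_\Tp$ for \emph{those} algebras that visibly fits Lemma~\ref{lem:Cornish}. Your assertion that ``the axiomatisation supplied by \cite{DNP} is exactly of the form Lemma~\ref{lem:Cornish} requires'' is not something you can claim wholesale for an arbitrary infinite subfamily of Ockham generators; you must name the family and quote its axiomatisation.

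There is also a small but genuine wrinkle in how you describe the axioms. For $\mathbf S_m$ the axiomatisation of $\CX_\Tp$ is: (i) Priestley space; (ii) $x\preccurlyeq y \Rightarrow u(x)=u(y)$; (iii) $x\preccurlyeq u^m(x)$. Condition~(ii) is a two-variable quasi-identity, not an atomic formula, and it is \emph{not} captured by ``$u$ is order-reversing'' alone. The point the paper makes is that (ii) is equivalent, via antisymmetry of $\preccurlyeq$, to the conjunction ``$u$ is order-preserving \emph{and} order-reversing''; since $u$ is both on $\M_2$, clause~(ii) of Lemma~\ref{lem:Cornish} absorbs it. Your proposed $\Sigma$ of one-variable inequalities between iterates of $u$ would not reproduce this condition, and if you tried to place (ii) in $\Sigma$ it would fail the ``atomic'' requirement. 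So the fix is: recognise that $u$ on $\M_2$ is both monotone and antitone, let Lemma~\ref{lem:Cornish}(ii) carry that, and put only the genuinely atomic $x\preccurlyeq u^m(x)$ into $\Sigma$. Separately, your reason for $\preccurlyeq$ not being an antichain (``non-Boolean'') is not the right one; this should be read off the explicit $\M_2$ once the family is fixed.
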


\begin{proof}  (Outline)
We use, without further detail,
 the restricted Priestley duality for Ockham algebras under which each finite Ockham algebra corresponds to a finite Ockham space, that is, a finite ordered set equipped with an order-reversing self-map~$g$. For all odd $m\in \mathbb N$, let $\mathbb D_m$ be the Ockham space shown in Figure~\ref{fig:Dm} and let $\mathbf S_m$ be the corresponding Ockham algebra. Define $\mathcal M \coloneqq \{\, \mathbf S_m \mid m \text{ odd}\,\}$.

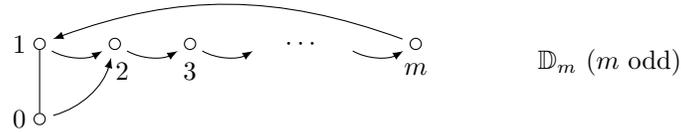
\begin{figure}[h!t]
\begin{tikzpicture}
    \begin{scope}[yshift=-3.125cm]
     \node[anchor=west] at (6.5,0.75) {$\mathbb D_m$ ($m$ odd)};
     \node[unshaded] (0) at (0,0) {};
         \node[label, anchor=east] at (0) {$0$};
     \node[unshaded] (m) at (0,1) {};
         \node[label, anchor=east] at (m) {$1$};
     \node[unshaded] (1) at (1,1) {};
         \node[label, anchor=north] at ($(1) + (0.1,0)$) {$2$};
     \node[unshaded] (2) at (2,1) {};
         \node[label, anchor=north] at (2) {$3$};
     \node[invisible] (3) at (3,1) {};
     \node at (3.5,1) {$\cdots$};
     \node[invisible] (4) at (4,1) {};
     \node[unshaded] (5) at (5,1) {};
         \node[label, anchor=north] at (5) {$m$};
     \draw[order] (0) to (m);
     \draw[curvy] (m) to [bend right] (1);
     \draw[curvy] (0) to [bend right] (1);
     \draw[curvy] (1) to [bend right] (2);
     \draw[curvy] (2) to [bend right] (3);
     \draw[curvy] (4) to [bend right] (5);
     \draw[curvy, bend angle=20] (5) to [bend right] (m);
     \end{scope}
\end{tikzpicture}
\caption{The Ockham space $\mathbb D_m$}
\label{fig:Dm}
\end{figure}

Consider a fixed $\M_1$ in our chosen family $\mathcal M$.
We take the alter ego $\M_2 = \langle M; u, \preccurlyeq, \Tp\rangle $ supplied by \cite[Theorem~3.7]{DP87}
in the simplified form described in~\cite[Theorem~5.4]{DNP}.
By~\cite[Theorem~5.7]{DNP}, if $\M_1 = \mathbf S_m$, then a structure $\X =\langle X; u, \preccurlyeq\rangle$ belongs to $\ISOcPnp(\MT_2)$ if and only if \begin{newlist}
\item[\upshape(i)] 
 $\langle X; \preccurlyeq, \Tp\rangle$ is a Priestley space,
\item[\upshape(ii)] 
$\X$ satisfies $x \preccurlyeq y \implies u(x) = u(y)$, and
\item[\upshape(iii)] 
$\X$ satisfies $x \preccurlyeq u^m(x)$.

\end{newlist} 
Since (ii) says that $u$ is both order-preserving and order-reversing and (iii) is an atomic formula, it follows immediately 
from Lemma~\ref{lem:Cornish} that (2) holds.
\end{proof}

\begin{remark} \label{rem:ock}
We may also ask whether  coincidence occurs for the classes $\ISP(\M_1)$,
with $\M_1 $ in our chosen family $\mathcal M$. Each $\M_1 \in \mathcal M$
has the property that $\CA\coloneqq \ISP(\M_1)$ is a variety (see \cite[Example~4.6]{DHP12}) or \cite[p.~183]{DP87}). As $\CA$ satisfies FDSC, the corresponding topological prevariety $\IScPnp(\MT_1)$ is standard by~\cite[Example~5.9]{CDFJ}---see the discussion following Theorem~\ref{thm:NatIsBohr} above.
Consequently, $b_0(\A) = n_{\CA}(\A)$, for all $\A\in \CA$, by Theorem~\ref{thm:NatIsBohr}.

We note that the first element, $\mathbf S_1$, in our sequence of varieties
is the much-studied variety of Stone algebras.
\end{remark}


\end{document}